\pgfplotsset{compat=1.15}
\def\NZQ{\Bbb}
\def\ZZ{{\NZQ Z}}
\newcommand{\Cc}{\mathcal{C}}
\newcommand{\cC}{\mathcal{C}}
\newcommand{\cD}{\mathcal{D}}
\newcommand{\cP}{\mathcal{P}}
\newcommand{\cF}{\mathcal{F}}
\newcommand{\cR}{\mathcal{R}}
\newcommand{\cS}{\mathcal{S}}
\newcommand{\cT}{\mathcal{T}}
\newcommand{\cG}{\mathcal{G}}
\renewcommand{\qedsymbol}{$\square$}
\newcommand{\fp}{\mathfrak{p}}
\def\opn#1#2{\def#1{\operatorname{#2}}} 
\opn\chara{char} \opn\length{\ell} \opn\pd{pd} \opn\rk{rk}
\opn\projdim{proj\,dim} \opn\injdim{inj\,dim} \opn\rank{rank}
\opn\depth{depth} \opn\grade{grade} \opn\height{height}
\opn\embdim{emb\,dim} \opn\codim{codim}
\opn\Tr{Tr} \opn\bigrank{big\,rank}
\opn\superheight{superheight}\opn\lcm{lcm}
\opn\trdeg{tr\,deg}
	\opn\reg{reg} \opn\lreg{lreg} \opn\ini{in} \opn\lpd{lpd}
	\opn\size{size} \opn\sdepth{sdepth}
	\opn\link{link}\opn\fdepth{fdepth}\opn\lex{lex}\opn\dist{dist}
	\opn\div{div} \opn\Div{Div} \opn\cl{cl} \opn\Cl{Cl}
	\opn\Spec{Spec} \opn\Supp{Supp} \opn\supp{supp} \opn\Sing{Sing}
	\opn\Ass{Ass} \opn\Min{Min}\opn\Mon{Mon}
	\opn\Ann{Ann} \opn\Rad{Rad} \opn\Soc{Soc}
	\opn\Im{Im} \opn\Ker{Ker} \opn\Coker{Coker} \opn\Am{Am}
	\opn\Hom{Hom} \opn\Tor{Tor} \opn\Ext{Ext} \opn\End{End}
	\opn\Aut{Aut} \opn\id{id}
	\opn\nat{nat}
	\opn\pff{pf}
	\opn\Pf{Pf} \opn\GL{GL} \opn\SL{SL} \opn\mod{mod} \opn\ord{ord}
	\opn\Gin{Gin} \opn\Hilb{Hilb}\opn\sort{sort}
	\opn\aff{aff} \opn
\opn\relint{relint} \opn\st{st}
	\opn\lk{lk} \opn\cn{cn} \opn\core{core} \opn\vol{vol}
	\opn\link{link} \opn\star{star}\opn\lex{lex}\opn\set{set}
	\opn\gr{gr}
	\def\pot#1#2{#1[\kern-0.28ex[#2]\kern-0.28ex]}
	\opn\dirlim{\underrightarrow{\lim}}
	\opn\inivlim{\underleftarrow{\lim}}
	\def\Implies{\ifmmode\Longrightarrow \else
		\unskip${}\Longrightarrow{}$\ignorespaces\fi}
	\def\implies{\ifmmode\Rightarrow \else
		\unskip${}\Rightarrow{}$\ignorespaces\fi}
	\def\iff{\ifmmode\Longleftrightarrow \else
		\unskip${}\Longleftrightarrow{}$\ignorespaces\fi}
	\let\epsilon\varepsilon
	\let\kappa=\varkappa
	\def\qed{\ifhmode\textqed\fi
		\ifmmode\ifinner\quad\qedsymbol\else\dispqed\fi\fi}
	\def\textqed{\unskip\nobreak\penalty50
		\hskip2em\hbox{}\nobreak\hfil\qedsymbol
		\parfillskip=0pt \finalhyphendemerits=0}
	\def\dispqed{\rlap{\qquad\qedsymbol}}
	\opn\dis{dis}
	\def\pnt{{\raise0.5mm\hbox{\large\bf.}}}
	\opn\Lex{Lex}
    \newtheorem{Theorem}{Theorem}[section]
	\newtheorem{Lemma}[Theorem]{Lemma}
	\newtheorem{Corollary}[Theorem]{Corollary}
	\newtheorem{Proposition}[Theorem]{Proposition}
	\newtheorem{Remark}[Theorem]{Remark}
	\newtheorem{Example}[Theorem]{Example}
	\newtheorem{Definition}[Theorem]{Definition}
    \newtheorem{Discussion}[Theorem]{Discussion}
\title{Minimal Primes and Radicality of Ideals Generated by Adjacent 2-Minors}
\author[T.~Hibi]{Takayuki Hibi}
\address[Takayuki Hibi]
{Department of Pure and Applied Mathematics, 
Graduate School of Information Science and Technology, 
Osaka University, 
Suita, Osaka 565-0871, Japan}
\email{hibi@math.sci.osaka-u.ac.jp}
\author[F. Navarra]{Francesco Navarra}      
   \address[Francesco Navarra]{Sabanci University, Faculty of Engineering and Natural Sciences, Orta Mahalle, Tuzla 34956, Istanbul, Turkey}	
\email{francesco.navarra@sabanciuniv.edu}
\author[A. A. Qureshi]{Ayesha Asloob Qureshi}      
   \address[Ayesha Asloob Qureshi]{Sabanci University, Faculty of Engineering and Natural Sciences, Orta Mahalle, Tuzla 34956, Istanbul, Turkey}	
\email{aqureshi@sabanciuniv.edu, ayesha.asloob@sabanciuniv.edu}
\author[S.~Saeedi~Madani]{Sara Saeedi Madani}
\address[Sara Saeedi Madani]
{Department of Mathematics and Computer Science, Amirkabir University of Technology, Tehran, Iran, and School of Mathematics, Institute for Research in Fundamental Sciences, Tehran, Iran} 
\email{sarasaeedi@aut.ac.ir, sarasaeedim@gmail.com}
\subjclass[2020]{05B50, 05E40, 13F20, 13P10}
\keywords{Adjacent $2$-minors, minimal prime, lattice ideal, primary decomposition, radical. }
\begin{document}

\maketitle

\begin{abstract}
In this paper, we provide a complete description of the minimal primes of ideals generated by adjacent $2$-minors, in terms of the so-called admissible sets and associated lattice ideals. We prove that for these ideals, the properties of being unmixed, Cohen–Macaulay, level, Gorenstein, and complete intersection are equivalent. Moreover, we give a combinatorial characterization of all convex collections of cells satisfying any of these equivalent properties. Finally, we study the radicality of these ideals and derive necessary combinatorial conditions based on minimal non-radical configurations.
\end{abstract}

\section*{Introduction}

Determinantal ideals constitute a classical and extensively studied subject in commutative algebra and algebraic geometry (\cite{BH, BV, HE}). Among several generalizations of determinantal ideals, a prominent and widely studied class is given by ideals generated by \emph{adjacent $2$--minors}.

Let $X = (x_{ij})_{i=1,\ldots,m \atop  j=1,\ldots,n}$  be an $m \times n$ matrix of indeterminates over a field $K$. An \emph{adjacent $2$--minor} is a binomial of the form $ x_{i,j}x_{i+1,j+1} - x_{i+1,j}x_{i,j+1},$ arising as the determinant of a $2 \times 2$ submatrix of $X$. Ideals generated by adjacent $2$--minors arise naturally in several contexts, including the study of lattice and toric ideals \cite{ES} and applications to algebraic statistics, particularly in the theory of contingency tables, that is, matrices of non-negative integers whose row and column sums, called \emph{marginals}, are fixed, see \cite{DES, HHO}. From a statistical perspective, a fundamental problem is to understand the space of all contingency tables with given marginals and, in particular, to determine whether this space is connected under suitable local transformations. Such transformations, usually called \emph{moves}, are integer-valued modifications of a table that preserve the row and column sums while keeping all entries non-negative. A basic class of moves arises from adjacent $2\times2$ sub-matrices: increasing the two diagonal entries $(i,j)$ and $(i+1,j+1)$ by one and simultaneously decreasing the remaining two entries by one yields a new contingency table with the same row and column sums, provided that all entries remain non–negative. Algebraically, such moves correspond to adjacent $2$--minors, and the ideal $I_{\mathrm{adj}}(X)$ generated by all these binomials encodes the connectivity of the space of contingency tables. According to \cite[Section 4]{DES}, two tables with the same marginals are connected by adjacent $2\times2$ moves if and only if the corresponding binomial belongs to $I_{\mathrm{adj}}(X)$.

Motivated by these connections, the algebraic structure of ideals generated by adjacent $2$--minors, particularly their primary decompositions and minimal primes, has been studied extensively. A complete combinatorial description of the minimal primes of $I_{\mathrm{adj}}(X)$ was obtained by Hoşten and Sullivant~\cite{HSu}. Building on this perspective, Herzog and Hibi~\cite{HH} initiated a systematic investigation of ideals generated by arbitrary sets of adjacent $2$--minors. By interpreting these ideals in terms of collections of cells, they developed a combinatorial framework that enabled the study of primality, radicality, and primary decomposition, especially in the convex case.

In this paper, we substantially extend the line of research initiated in~\cite{HH}. We provide a complete description of the minimal primes of ideals generated by adjacent $2$--minors associated with an arbitrary collection of cells. As a consequence, we prove that every unmixed adjacent $2$--minor ideal is a complete intersection, and we obtain a combinatorial characterization of all convex collections of cells satisfying these equivalent conditions. Furthermore, we investigate the radicality of such ideals and establish several necessary conditions.

The paper is organized as follows. In Section~\ref{Sec: preliminaries}, we introduce the necessary definitions used throughout the paper. Section~\ref{Sec: minimal primes} is devoted to the description, in full generality, of the minimal primes of ideals generated by adjacent $2$--minors associated with collections of cells. This description is formulated in terms of admissible sets and lattice ideals (see Theorem~\ref{Thm: admissible set}). Our results extend previously known descriptions (see ~\cite[Lemma~3.1]{HH}) and play a central role in proving one of our main results in Section~\ref{Section: CI = Unmixed}. 
\begin{Theorem} [Theorem~\ref{Thm: Unmixed = CI}]
Let $\cC$ be a collection of cells, and let $I_{\mathrm{adj}}(\cC)\subset S_{\cC}=K[x_v \mid v \text{ is a vertex of } \cC]$ denote its adjacent $2$--minor ideal.
Then the following conditions are equivalent:
\begin{enumerate}
\item $I_{\mathrm{adj}}(\cC)$ is a complete intersection;
\item $S_\cC/I_{\mathrm{adj}}(\cC)$ is Gorenstein;
\item $S_\cC/I_{\mathrm{adj}}(\cC)$ is level;
\item $S_\cC/I_{\mathrm{adj}}(\cC)$ is Cohen–Macaulay;
\item $I_{\mathrm{adj}}(\cC)$ is unmixed.
\end{enumerate}
Moreover, if $I_{\mathrm{adj}}(\cC)$ satisfies above conditions, then $\mathrm{ht}(I_{\mathrm{adj}}(\cC)) =|\cC|$.
\end{Theorem}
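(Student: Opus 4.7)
The implications $(1)\Rightarrow(2)\Rightarrow(3)\Rightarrow(4)\Rightarrow(5)$ are standard: a complete intersection in a polynomial ring is Gorenstein, every Gorenstein ring has one-dimensional socle in its Artinian reduction hence is level, level implies Cohen--Macaulay by definition, and Cohen--Macaulay rings are unmixed (all associated primes are minimal and of the same height). So the entire content of the theorem, including the height statement, reduces to proving $(5)\Rightarrow(1)$.

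The key numerical observation is that $I_{\mathrm{adj}}(\mathcal{C})$ is generated by exactly $|\mathcal{C}|$ binomials, one adjacent $2$--minor per cell, so $\mathrm{ht}(I_{\mathrm{adj}}(\mathcal{C}))\leq |\mathcal{C}|$. In the Cohen--Macaulay ring $S_\mathcal{C}$, any ideal of height equal to the number of its generators is generated by a regular sequence, hence a complete intersection. Thus it suffices to show that under the unmixed hypothesis, $\mathrm{ht}(I_{\mathrm{adj}}(\mathcal{C}))=|\mathcal{C}|$. My plan is to use Theorem~\ref{Thm: admissible set} to write every minimal prime in the form $P_\mathcal{T}$, where $\mathcal{T}$ ranges over admissible sets and $P_\mathcal{T}$ decomposes as a sum of a monomial prime (killing certain vertex variables prescribed by $\mathcal{T}$) and a lattice prime attached to the complementary configuration. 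The height of $P_\mathcal{T}$ then splits as the number of vanished variables plus the codimension of the lattice ideal, which in turn can be read off from the cell-count and connectivity of the complement of $\mathcal{T}$.

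To pin the common height down to $|\mathcal{C}|$, I would first identify a canonical admissible set whose associated minimal prime has height precisely $|\mathcal{C}|$ -- a natural candidate is the admissible set producing the pure lattice prime (or, in convex cases, the one corresponding to the full $2$--minor ideal of the matrix), for which the codimension can be computed directly by counting cells. Unmixedness then propagates this value to every other $P_\mathcal{T}$, and since $\mathrm{ht}(I_{\mathrm{adj}}(\mathcal{C}))$ is the minimum of the heights of its minimal primes, one obtains $\mathrm{ht}(I_{\mathrm{adj}}(\mathcal{C}))=|\mathcal{C}|$, yielding both the complete intersection property and the \emph{moreover} statement.

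The main obstacle is the combinatorial bookkeeping in the height computation of $P_\mathcal{T}$: expressing the codimension of the lattice-ideal summand purely in terms of the cells of $\mathcal{C}\setminus\mathcal{T}$ and its connected components, so that the equality $\mathrm{ht}(P_\mathcal{T})=|\mathcal{C}|$ translates to a rigid constraint on $\mathcal{T}$. Handling non-convex and disconnected collections uniformly, where several inequivalent admissible sets coexist and the lattice summand may pick up unexpected binomial relations, is where I expect the technical core of the argument to lie. Once that equality is shown to be forced by unmixedness, the conclusion that $I_{\mathrm{adj}}(\mathcal{C})$ is generated by a regular sequence of length $|\mathcal{C}|$ is immediate, closing the chain of equivalences.
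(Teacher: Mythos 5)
Your proposal is correct and follows essentially the same route as the paper: the anchor is the pure lattice prime $L_{\mathcal{C}}$ (the $W=\emptyset$ admissible set), which Proposition~\ref{Prop: LC is a minial prime} exhibits as a minimal prime of height $|\mathcal{C}|$, so unmixedness forces every minimal prime --- and hence $\mathrm{ht}(I_{\mathrm{adj}}(\mathcal{C}))$ --- to equal $|\mathcal{C}|=|\mathcal{G}(I_{\mathrm{adj}}(\mathcal{C}))|$, giving a complete intersection. The ``technical core'' you anticipate, namely computing $\mathrm{ht}(P_{\mathcal{T}})$ for every admissible set, is not actually needed: a single minimal prime of height $|\mathcal{C}|$ together with unmixedness already pins down the common height, which is exactly how the paper argues.
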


\noindent The reader may refer to \cite{BH} for further details on these properties. From a combinatorial--algebraic perspective, we investigate necessary conditions
on the shape of a collection of cells for the validity of the equivalent properties
listed in Theorem~\ref{Thm: Unmixed = CI}. In
Lemmas~\ref{Coro: necessary condition for CI} and
\ref{Coro: necessary condition for CI with X pento},
we show that to satisfy any of these properties, a collection of cells must avoid both a square tetromino
and an $X$--pentomino, see Figure~\ref{fig: square and X-pento}.
Moreover, when the collection of cells is \emph{convex},
the absence of these two configurations is not only necessary
but also sufficient for the properties in
Theorem~\ref{Thm: Unmixed = CI} to hold.
This yields a complete combinatorial characterization,
which is stated in the following result.

\begin{Theorem}[Theorem \ref{Thm: characterization convex CI}]
Let $\cC$ be a convex collection of cells. Then $I_{\mathrm{adj}}(\cC)$ satisfies the equivalent conditions in Theorem~\ref{Thm: Unmixed = CI} if and only if $\cC$ contains neither a square tetromino nor an $X$-pentomino.
\end{Theorem}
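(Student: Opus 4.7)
The plan is to prove both implications separately. The ``only if'' direction is immediate: assuming the equivalent conditions of Theorem~\ref{Thm: Unmixed = CI}, Lemmas~\ref{Coro: necessary condition for CI} and~\ref{Coro: necessary condition for CI with X pento} directly force $\cC$ to contain neither a square tetromino nor an $X$-pentomino, without requiring convexity. So the substantive content of the theorem lies in the reverse implication.

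For the ``if'' direction, assume $\cC$ is convex and contains neither of the two forbidden configurations. The plan is to prove that $I_{\mathrm{adj}}(\cC)$ is a complete intersection by showing that $\mathrm{ht}(I_{\mathrm{adj}}(\cC)) = |\cC|$, which matches the number of generators. The first step is a purely combinatorial structural analysis: I would show that a convex collection $\cC$ forbidding a $2\times 2$ block must be ``thin,'' meaning it sits in a union of one horizontal and one vertical strip of unit width, and forbidding the $X$-pentomino further restricts how these strips may meet. I expect this forces $\cC$ to be essentially a path-like configuration built from maximal horizontal and vertical segments joined at isolated corner cells, which I would formalize as a structural lemma and prove by case analysis on the bounding rectangle of $\cC$.

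Once this structural description is in hand, the second step is to apply Theorem~\ref{Thm: admissible set} to exhibit a minimal prime of height $|\cC|$. Concretely, I would construct an explicit admissible set $\cT$ whose corresponding lattice ideal contributes the required height: since each cell of $\cC$ sits in at most one horizontal and one vertical ``run'' and no four cells form a $2\times 2$ block, the admissible data decouple cleanly, so the resulting minimal prime is generated by a collection of $|\cC|$ independent linear and binomial elements. Alternatively, one could induct on $|\cC|$ by stripping off an extremal cell (one with only one neighbor in $\cC$, whose existence follows from the structural description) and verifying that the corresponding $2$-minor is a nonzerodivisor modulo the ideal of the smaller collection, which again retains convexity and avoids the forbidden patterns.

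The main obstacle will be the combinatorial classification of convex collections avoiding both a square tetromino and an $X$-pentomino, and proving that this shape is rigid enough to guarantee a height equal to $|\cC|$. In particular, the delicate point is that convexity is essential: non-convex collections can avoid both patterns yet fail to be complete intersections, so the argument must genuinely use the intersection of $\cC$ with every row and column being an interval. I expect the bulk of the work will be in carefully isolating the admissible set (or the regular sequence) dictated by this shape, and verifying via Theorem~\ref{Thm: admissible set} that no other minimal prime can have smaller height, thereby pinning $\mathrm{ht}(I_{\mathrm{adj}}(\cC)) = |\cC|$ and concluding the complete intersection property.
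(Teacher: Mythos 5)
Your ``only if'' direction is exactly the paper's: it follows immediately from Lemmas~\ref{Coro: necessary condition for CI} and~\ref{Coro: necessary condition for CI with X pento}, and your overall skeleton for the converse (structural classification, then a height computation via Theorem~\ref{Thm: admissible set}) matches the paper's strategy in outline. However, there are two genuine problems. First, your structural claim is wrong as stated: a convex collection avoiding a square tetromino need not sit inside the union of \emph{one} horizontal and \emph{one} vertical unit strip. The correct classification (the paper's Discussion~\ref{Discussion: convex polyomino}) is that each connected component is a parallelogram path polyomino --- a staircase with arbitrarily many bends --- possibly decorated with West/South and North/East tails, where avoiding the $X$-pentomino only controls how the tails meet. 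A staircase with three or more bends already violates your ``one strip of each kind'' description, so the case analysis you build on top of it would not cover all collections.

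Second, and more seriously, the central step is not actually argued. Since $\mathrm{ht}(I_{\mathrm{adj}}(\cC))$ is the \emph{minimum} of the heights of the minimal primes, and Proposition~\ref{Prop: LC is a minial prime} already exhibits the minimal prime $L_{\cC}=P_{\emptyset}(\cC)$ of height $|\cC|$, constructing ``an explicit admissible set whose minimal prime has height $|\cC|$'' accomplishes nothing: what must be shown is that \emph{every} admissible set $W$ yielding a minimal prime satisfies $|W|+|\cC_W|\geq|\cC|$. This lower bound is where essentially all of the paper's work lies: it first proves the parallelogram-path case by a Gr\"obner basis argument (the anti-diagonal leading terms are pairwise coprime, so $I_{\mathrm{adj}}(\cC)$ is a complete intersection outright), and then runs a delicate case analysis on how $W$ meets the vertices of the corner cells where two tails intersect, splitting $\cC$ into parallelogram-path pieces and adding up the resulting equalities. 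Your proposal gestures at this (``verifying that no other minimal prime can have smaller height'') but supplies no mechanism for it; likewise, your alternative induction via a regular-sequence/nonzerodivisor argument would require knowing that the new adjacent $2$-minor avoids every associated prime of the smaller ideal, which again amounts to controlling all the $P_W$'s. As written, the proof has a gap precisely at the theorem's point of substance.
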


\noindent However, this result does not extend beyond the convex case, as shown by a counterexample in Remark~\ref{Rmk: counter-example convex}. Section~\ref{Sec: Radicality} is devoted to the study of radicality, motivated by the combinatorial interpretation of the primary components given in Corollary~\ref{Coro: primary decomposition} through the description of the minimal primes. From a computational perspective, we employ computer algebra systems (see~\cite{N}) to explore examples and verify conjectural patterns related to radicality. In particular, we prove in Theorem~\ref{Thm: infinity minimally non-radical} that, among all collections of cells of rank greater than~$6$, there always exists a collection of cells that is \emph{minimally non-radical}, that is, not radical but with every weakly connected subcollection being radical (see Definition~\ref{Defn:minimally non-radical}). Finally we provide necessary conditions for radicality in Theorem~\ref{Prop: necessary conditions}.

\section{Preliminaries}\label{Sec: preliminaries}

Let $(i,j),(k,l) \in \mathbb{Z}^2$. We define the partial order $(i,j) \leq (k,l)$ if and only if $i \leq k$ and $j \leq l$. Given $a = (i,j)$ and $b = (k,l)$ in $\mathbb{Z}^2$ with $a \leq b$, the set 
\[
[a,b] = \{(m,n) \in \mathbb{Z}^2 : i \leq m \leq k,\ j \leq n \leq l \}
\]
is called an \textit{interval} of $\mathbb{Z}^2$. If $i < k$ and $j < l$, then the interval $[a,b]$ is said to be \textit{proper}; in this case, we refer to $a$ and $b$ as the \textit{diagonal corners} of $[a,b]$, and to $c = (i,l)$ and $d = (k,j)$ as the \textit{anti-diagonal corners}. If $j = l$ (respectively, $i = k$), then $a$ and $b$ are said to be in a \textit{horizontal} (respectively, \textit{vertical}) \textit{position}. 
A proper interval $C = [a,b]$ such that $b = a + (1,1)$ is called a \textit{cell} of $\mathbb{Z}^2$. The points $a$, $b$, $c$, and $d$ are respectively referred to as the \textit{lower left}, \textit{upper right}, \textit{upper left}, and \textit{lower right} \textit{corners} of $C$. We denote by 
\[
V(C) = \{a,b,c,d\} \quad \text{and} \quad E(C) = \{\{a,c\}, \{c,b\}, \{b,d\}, \{a,d\}\}
\]
the sets of \textit{vertices} and \textit{edges} of $C$, respectively.  

For a non-empty collection of cells $\cC$ in $\mathbb{Z}^2$, we define the sets of vertices and edges of $\cC$, respectively, as
\[
V(\cC) = \bigcup_{C \in \cC} V(C), \qquad 
E(\cC) = \bigcup_{C \in \cC} E(C).
\]
The \textit{rank} of $\cC$, denoted by $\vert \cC \vert$, is the number of cells contained in $\cC$.

Given two distinct cells $C$ and $D$ in $\cC$, a \textit{walk} (respectively, a \textit{weak walk}) from $C$ to $D$ in $\cC$ is a sequence of cells $\cC : C = C_1, \dots, C_m = D$ in $\mathbb{Z}^2$ such that $C_i \cap C_{i+1}$ is a common edge (respectively, a common edge or vertex) of $C_i$ and $C_{i+1}$, for all $i = 1, \dots, m-1$. Moreover, if all cells in the sequence are distinct, i.e., $C_i \neq C_j$ for all $i \neq j$, then $\cC$ is called a \textit{path} (respectively, a \textit{weak path}) from $C$ to $D$.  

Two cells $C$ and $D$ are said to be \textit{connected} (respectively, \textit{weakly connected}) in $\cC$ if there exists a path (respectively, a weak path) from $C$ to $D$ in $\cC$. A \textit{polyomino} $\cP$ is a finite collection of cells in $\mathbb{Z}^2$ such that any two cells in $\cP$ are connected in $\cP$. 
A finite, non-empty collection of cells $\cC$ is said to be \textit{weakly connected} if any two cells in $\cC$ are weakly connected in $\cC$.  

A \textit{connected component} $\cC'$ of $\cC$ is a polyomino contained in $\cC$ that is maximal with respect to inclusion; that is, if $A \in \cC \setminus \cC'$, then $\cC' \cup \{A\}$ is not a polyomino. For instance, the weakly connected collection of cells shown in Figure~\ref{Figure: Example weakly conn. coll. of cells}~(A) consists of four connected components. Observe that every polyomino is a weakly connected collection of cells. Moreover, for convention, we assume that the empty-set is also a collection of cells.

\begin{figure}[h]
 	\centering
 	\subfloat[]{\includegraphics[scale=0.6]{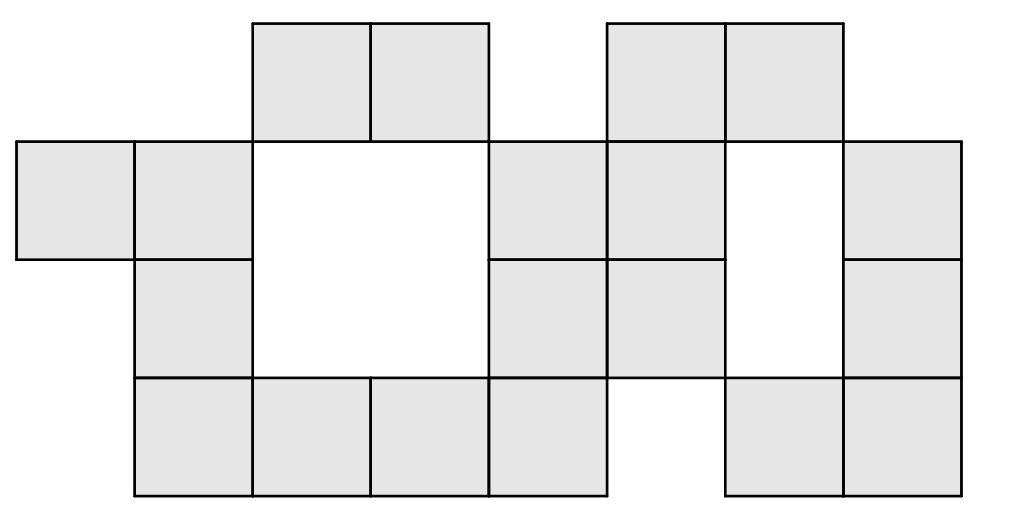}}\qquad\qquad
	\subfloat[]{\includegraphics[scale=0.8]{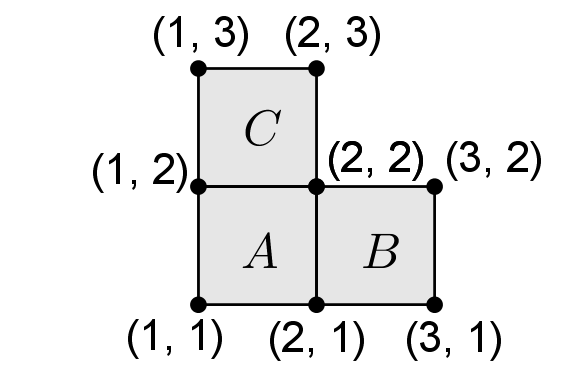}}
    \caption{A weakly connected collection of cells and a polyomino.}
	\label{Figure: Example weakly conn. coll. of cells}
\end{figure}

Consider two cells $A$ and $B$ of $\mathbb{Z}^2$ with $a=(i,j)$ and $b=(k,l)$ as the lower left corners of $A$ and $B$, respectively, with $a\leq b$. The \textit{cell interval} $[A,B]$, also called a \textit{rectangle}, is the set of cells of $\mathbb{Z}^2$ whose lower left corner $(r,s)$ satisfies $i\leqslant r\leqslant k$ and $j\leqslant s\leqslant l$.

If $\cC$ is a collection of cells, then a \textit{maximal rectangle} of $\cC$ is a rectangle $\cR$ of cells of $\cC$ such that no other rectangle whose cells all belong to $\cC$ properly contains $\cR$. Moreover, an interval $[a,b]$ of $\ZZ^2$ is called the \textit{minimal bounding box} of $\cC$ if it is the smallest interval of $\ZZ^2$ with respect to the set inclusion that contains $V(\cC)$.

Having established the preceding combinatorial definitions and notation, we now turn to Commutative Algebra. In \cite{HH} and \cite{Q}, the authors introduced the notions of the \textit{adjacent 2-minor ideal} and the \textit{inner 2-minor ideal} of a collection of cells, which we briefly recall here.

Let $\cC$ be a finite and non-empty collection of cells, and let $S_\cC = K[x_v \mid v \in V(\cC)]$
be its polynomial ring, where $K$ is a field. A proper interval $[a,b]$ is called an \textit{inner interval} of $\cC$ if all cells contained in $[a,b]$ belong to $\cC$.  

If $[a,b]$ is an inner interval of $\cC$ with diagonal corners $a$ and $b$, and anti-diagonal corners $c$ and $d$, then we consider the binomial $x_a x_b - x_c x_d \in S_\cC$, which is called an \textit{inner 2-minor} of $\cC$. In particular, if $[a,b]$ is a cell of $\cC$, then $x_a x_b - x_c x_d$ is called an \textit{adjacent 2-minor} of $\cC$. 

The ideal $I_{\mathrm{adj}}(\cC)$ in $S_\cC$, generated by all adjacent $2$-minors of $\cC$, is called the \textit{adjacent 2-minor ideal} of $\cC$. We denote by $\cG(I_{\mathrm{adj}}(\cC))$ the set of all binomial generators of $I_{\mathrm{adj}}(\cC)$ of the form $x_a x_b - x_c x_d$ where $[a,b]$ is a cell.  

Similarly, the ideal $I_{\cC}$ in $S_\cC$, generated by all inner $2$-minors of $\cC$, is called the \textit{inner 2-minor ideal} of $\cC$. Clearly, we have $I_{\mathrm{adj}}(\cC) \subseteq I_{\cC}$.  

\begin{Example}\rm
Consider the polyomino $\cP$ in Figure~\ref{Figure: Example weakly conn. coll. of cells}~(B), and assume that 
$V(\cP) = [(1,1),(3,3)]$. Then the adjacent $2$-minor ideal of $\cP$ is
\begin{align*}
I_{\mathrm{adj}}(\cP) = \big(&x_{2,2}x_{1,1} - x_{2,1}x_{1,2}, \;
x_{3,2}x_{2,1} - x_{3,1}x_{2,2}, \;
x_{2,3}x_{1,2} - x_{2,2}x_{1,3}\big).
\end{align*}

\noindent The ideal $I_{\cP}$ of the inner $2$-minors of $\cP$ is generated by
\begin{align*}
x_{2,2}x_{1,1} - x_{2,1}x_{1,2}, \;
x_{3,2}x_{1,1} - x_{3,1}x_{1,2}, \;
x_{3,2}x_{2,1} - x_{3,1}x_{2,2}, \;
x_{2,3}x_{1,1} - x_{2,1}x_{1,3}, \;
x_{2,3}x_{1,2} - x_{2,2}x_{1,3}.
\end{align*}
\end{Example}

Finally, let us recall an important property. If $\cC$ is a collection of cells that can be written as a union of collections of cells $\cC_1, \dots, \cC_r$ such that $V(\cC)$ is the disjoint union of the vertex sets $V(\cC_j)$, with $j \in [r]$, then the following isomorphisms hold:
\[
S_{\cC}/I_{\mathrm{adj}}(\cC) \cong \bigotimes_{i=1}^r \left(S_{\cC_i}/I_{\mathrm{adj}}(\cC_i)\right),
\qquad 
S_{\cC}/I_{\cC} \cong \bigotimes_{i=1}^r \left(S_{\cC_i}/I_{\cC_i}\right).
\]
Most of the commonly studied algebraic properties (such as Cohen–Macaulayness and Gorensteinness) are preserved under tensor products. Hence, it is not restrictive to consider only weakly connected collections of cells. Throughout this paper, whenever we refer to a ``collection of cells'', we mean a weakly connected collection of cells.

  \section{Minimal primes of adjacent $2$-minor ideals}\label{Sec: minimal primes}

This section is devoted to studying the minimal prime ideals of the adjacent $2$-minor ideals of a collection of cells. In what follows, we generalize \cite[Theorem~3.2]{HH} to a more general setting. Before doing so, we introduce some preliminary algebraic and combinatorial tools. We begin by recalling the definition of the lattice ideal of a collection of cells, as given in \cite{Q}.

Let $\cC$ be a collection of cells. For each $a \in V(\cC)$, denote by $\mathbf{v}_a$ the vector in $\mathbb{Z}^{|V(\cC)|}$ having $1$ in the coordinate indexed by $a$ and $0$ in all other coordinates. If $C=[a,b] \in \cC$ is a cell with $a,b$ as its diagonal corners and $c,d$ as its anti-diagonal corners, we set $\mathbf{v}_{[a,b]} = \mathbf{v}_a + \mathbf{v}_b - \mathbf{v}_c - \mathbf{v}_d \in \mathbb{Z}^{|V(\cC)|}$. We define $\Lambda_\cC$ as the sublattice of $\mathbb{Z}^{|V(\cC)|}$ generated by the vectors $\mathbf{v}_C$ for all $C \in \cC$.  Observe that the rank of $\Lambda_\cC$ is equal to $\vert \cC\vert$.\\
Let $n = |V(\cC)|$. For $\mathbf{v} \in \mathbb{N}^n$, we denote by $x^{\mathbf{v}}$ the monomial in $S_\cC$ having $\mathbf{v}$ as its exponent vector. For $\mathbf{e} \in \mathbb{Z}^n$, we denote by $\mathbf{e}^+$ the vector obtained from $\mathbf{e}$ by replacing all negative components with zero, and by $\mathbf{e}^- = -(\mathbf{e} - \mathbf{e}^+)$ the corresponding non-positive part.  \\
Let $L_\cC$ be the lattice ideal of $\Lambda_\cC$, that is, the following binomial ideal in $S_\cC$:
\[
L_\cC = (\{x^{\mathbf{e}^+} - x^{\mathbf{e}^-} \mid \mathbf{e} \in \Lambda_\cC\}).
\]

Given a collection of cells $\cC$, the ideal $L_\cC$ is a prime ideal, see \cite[pp. 288]{Q}. Moreover, it is shown in \cite[Theorem~3.6]{Q} that $I_\cC$ is a prime ideal if and only if $I_\cC = L_\cC$. 

\begin{Proposition}\label{Prop: LC is a minial prime}
Let $\cC$ be a collection of cells. Then $L_{\cC}$ is a minimal prime of $I_{\mathrm{adj}}(\cC)$. Moreover, $\mathrm{ht}(I_{\mathrm{adj}}(\cC)) \leq |\cC|$.
\end{Proposition}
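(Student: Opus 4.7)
The plan is to first check $I_{\mathrm{adj}}(\cC)\subseteq L_\cC$, and then to rule out any prime strictly between $I_{\mathrm{adj}}(\cC)$ and $L_\cC$ by a localization argument in the Laurent polynomial ring over the variables $x_v$.

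For the containment, observe that for each cell $C=[a,b]\in\cC$ with anti-diagonal corners $c,d$, the lattice vector $\mathbf{v}_C=\mathbf{v}_a+\mathbf{v}_b-\mathbf{v}_c-\mathbf{v}_d$ splits as $\mathbf{v}_C^+=\mathbf{v}_a+\mathbf{v}_b$ and $\mathbf{v}_C^-=\mathbf{v}_c+\mathbf{v}_d$, so the adjacent $2$-minor $x_ax_b-x_cx_d$ equals $x^{\mathbf{v}_C^+}-x^{\mathbf{v}_C^-}$ and hence lies in $L_\cC$. Because $L_\cC$ is prime (as recalled just before the statement), it contains some minimal prime $P$ of $I_{\mathrm{adj}}(\cC)$; the aim is then to conclude $P=L_\cC$.

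The main step is a Laurent localization. Every generator $\mathbf{v}_C$ of $\Lambda_\cC$ has coordinate sum zero, hence every binomial in the generating set of $L_\cC$ is homogeneous with both terms of the same positive degree; in particular $L_\cC$ contains no monomial. Setting $T$ to be the multiplicative set of nonzero monomials of $S_\cC$ and $R=T^{-1}S_\cC$, both $P$ and $L_\cC$ are disjoint from $T$, so they extend to proper primes of $R$ that contract back to themselves. In $R$, each generator $x^{\mathbf{v}_C^+}-x^{\mathbf{v}_C^-}$ of $I_{\mathrm{adj}}(\cC)R$ equals a unit multiple of $x^{\mathbf{v}_C}-1$; using the identities
\[
x^{\mathbf{e}_1+\mathbf{e}_2}-1=x^{\mathbf{e}_1}(x^{\mathbf{e}_2}-1)+(x^{\mathbf{e}_1}-1),\qquad x^{-\mathbf{e}}-1=-x^{-\mathbf{e}}(x^{\mathbf{e}}-1),
\]
one sees that the family $\{x^{\mathbf{v}_C}-1:C\in\cC\}$ already generates $L_\cC R$, because $\{\mathbf{v}_C\}_{C\in\cC}$ generates $\Lambda_\cC$ as a $\mathbb{Z}$-lattice. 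Hence $I_{\mathrm{adj}}(\cC)R=L_\cC R$, which forces $PR=L_\cC R$ and, upon contraction, $P=L_\cC$.

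The height bound is then immediate: $\mathrm{ht}(I_{\mathrm{adj}}(\cC))\leq\mathrm{ht}(L_\cC)=\mathrm{rank}(\Lambda_\cC)=|\cC|$; alternatively, one may invoke Krull's height theorem directly, since $I_{\mathrm{adj}}(\cC)$ is generated by $|\cC|$ binomials. The delicate point of the argument is verifying in $R$ that the full lattice ideal $L_\cC R$ is already generated by the $|\cC|$ cell-binomials, since in $S_\cC$ itself $L_\cC$ may require many more generators than $\mathrm{rank}(\Lambda_\cC)$; this reduction is precisely where inverting the variables is essential.
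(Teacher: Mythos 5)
Your argument is correct, and at its core it rests on the same key fact as the paper's proof: $I_{\mathrm{adj}}(\cC)$ and $L_{\cC}$ become equal after inverting all the variables. The difference is in how that fact is obtained and deployed. The paper imports it as the saturation identity $L_{\cC} = I_{\mathrm{adj}}(\cC) : \left(\prod_{a \in V(\cC)} x_a\right)^{\infty}$ from Ho\c{s}ten--Shapiro and then runs an element-level argument: for $f \in L_{\cC}$ there is a monomial $u$ with $uf \in I_{\mathrm{adj}}(\cC) \subseteq \mathfrak{p}$, and since $\mathfrak{p} \subseteq L_{\cC}$ cannot contain a variable, $f \in \mathfrak{p}$. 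You instead prove the equality $I_{\mathrm{adj}}(\cC)R = L_{\cC}R$ in the Laurent localization $R$ directly, via the telescoping identities for $x^{\mathbf{e}}-1$, and then contract $PR = L_{\cC}R$ back to $S_{\cC}$; this makes the proof self-contained (no citation needed) at the cost of redoing standard lattice-ideal bookkeeping, and it is a perfectly valid alternative. One small imprecision: your justification that $L_{\cC}$ contains no monomial (``both terms of the same positive degree'') does not by itself yield the conclusion; the clean argument is that every generator of $L_{\cC}$ lies in the kernel of the evaluation map $x_v \mapsto 1$, whereas no monomial does. The height bound is handled the same way as in the paper, and your alternative via Krull's height theorem is in fact simpler, since it bypasses the computation of $\mathrm{rank}(\Lambda_{\cC})$ entirely.
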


\begin{proof}
Suppose that there exists a prime ideal $\mathfrak{p}$ such that $I_{\mathrm{adj}}(\cC) \subseteq \mathfrak{p} \subseteq L_{\cC}.$ We show that $L_{\cC} \subseteq \mathfrak{p}$.  
Let $f \in L_{\cC}$. By \cite[Proposition~1.1]{HS}, we have
\[
L_{\cC} = I_{\mathrm{adj}}(\cC) : \left(\prod_{a \in V(\cC)} x_a\right)^{\infty}.
\]
Hence, there exists a monomial $u \in S_{\cC}$ such that $uf \in I_{\mathrm{adj}}(\cC)$.  
Since $I_{\mathrm{adj}}(\cC) \subseteq \mathfrak{p}$, it follows that $uf \in \mathfrak{p}$.  
If $f \notin \mathfrak{p}$, then by the primality of $\mathfrak{p}$ we must have $u \in \mathfrak{p}$.  
Again, by the primality of $\mathfrak{p}$, this implies that at least one variable appearing in $u$ belongs to $\mathfrak{p}$.  
However, since $\mathfrak{p} \subseteq L_{\cC}$, this would mean that $L_{\cC}$ contains a variable, which is a contradiction, as $L_{\cC}$ does not contain any variable.\\
Now, it is well known from \cite[Theorem 2.1 (b)]{ES} that, if $\Lambda$ is a lattice, then its lattice ideal $L_{\Lambda}$ has height equal to the rank of $\Lambda$. In our case, the rank of $\Lambda_\cC$ is equal to $\vert \cC\vert$. Then $\mathrm{ht}(I_{\mathrm{adj}}(\cC)) \leq \mathrm{ht}(L_\cC) = |\cC|$.
\end{proof}

We now recall the notion of an admissible set, introduced in~\cite{HH}, which plays a central role in the description of the minimal primes of adjacent $2$-minor ideals.

\begin{Definition}\rm \label{Defn: admissible set}
Let $\cC$ be a collection of cells. A subset $W \subseteq V(\cC)$ is called an \textit{admissible set} of $\cC$ if, for every cell $C$ of $\cC$, either $W \cap C = \emptyset$ or $W \cap C$ contains at least one edge of $C$.
\end{Definition}

The following example illustrates the definition.

\begin{Example}\rm
Let $\cP$ be the polyomino shown in Figure~\ref{Figure: Example weakly conn. coll. of cells} (B).
Consider $W = \{(1,3),(2,3)\}.$ Then $W$ is an admissible set of $\cP$. Indeed, we have $W \cap A = W \cap B = \emptyset$, and $W \cap C$ is an edge of $C$.
Now consider $W' = \{(1,1),(2,1)\}.$ This set is not admissible, since $W' \cap B = \{(2,1)\}$.
\end{Example}

Given an admissible set, we can associate to it a subcollection of cells and a naturally defined prime ideal, as follows.

\begin{Definition}\rm Let $W$ be an admissible set of $\cC$. We define the following collection of cells
    $$
        \cC_W=\{C \in \cC: C\cap W=\emptyset\},
    $$
    and the associated ideal
    $$
        P_{W}(\cC)= (x_w:w\in W)+ L_{\cC_W},
    $$
    where $L_{\cC_W}$ is the lattice ideal attached to $\cC_W$.
\end{Definition}

The next remark records two basic properties of the ideal $P_W(\mathcal{C})$ that will be used repeatedly in what follows.

\begin{Remark}\label{rem:primePW}\rm
For any admissible set $W \subseteq V(\mathcal{C})$, the ideal
$P_W(\mathcal{C})$ is prime. This follows from the fact that
$(x_w : w \in W)$ and the lattice ideal $L_{\mathcal{C}_W}$ are prime ideals involving
disjoint sets of variables. Moreover, by construction,
$P_W(\mathcal{C})$ contains $I_{\mathrm{adj}}(\mathcal{C})$.
Finally, we have
\[
\mathrm{ht}\big(P_W(\mathcal{C})\big)
=
|W| + |\mathcal{C}_W|,
\]
since $\mathrm{ht}\big(L_{\mathcal{C}_W}\big)=|\mathcal{C}_W|$, as discussed
in the last part of the proof of Proposition~\ref{Prop: LC is a minial prime}.
\end{Remark}

The following theorem shows that all minimal primes of
$I_{\mathrm{adj}}(\mathcal{C})$ arise from admissible sets.

     \begin{Theorem}\label{Thm: admissible set}
         Let $\cC$ be a collection of cells and $\mathfrak{p}$ be a minimal prime of $I_{\mathrm{adj}}(\cC)$. Then there exists an admissible set $W$ of $\cC$ such that $\mathfrak{p}=P_{W}(\cC)$.
     \end{Theorem}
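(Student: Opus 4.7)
The plan is to read off a candidate admissible set directly from $\mathfrak{p}$ and then show the resulting prime $P_W(\mathcal{C})$ is sandwiched between $I_{\mathrm{adj}}(\mathcal{C})$ and $\mathfrak{p}$, so minimality forces equality.

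First I would define
\[
W \;=\; \{\,v \in V(\mathcal{C}) \;:\; x_v \in \mathfrak{p}\,\}.
\]
To check that $W$ is admissible, I fix a cell $C=[a,b]\in\mathcal{C}$ with anti-diagonal corners $c,d$ and suppose that $W\cap C\neq\emptyset$, say $x_a\in\mathfrak{p}$. Since $x_ax_b-x_cx_d\in I_{\mathrm{adj}}(\mathcal{C})\subseteq\mathfrak{p}$, primality forces $x_c\in\mathfrak{p}$ or $x_d\in\mathfrak{p}$, and in either case the two vertices of $C$ lying in $W$ form an edge of $C$. The other three choices of which corner is in $W$ are entirely analogous, so $W$ is admissible.

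Next I would verify that $P_W(\mathcal{C})\subseteq\mathfrak{p}$. The generators $x_w$ with $w\in W$ belong to $\mathfrak{p}$ by definition of $W$. For the lattice part, I would use the saturation identity
\[
L_{\mathcal{C}_W} \;=\; I_{\mathrm{adj}}(\mathcal{C}_W)\,:\,\Bigl(\prod_{a\in V(\mathcal{C}_W)}x_a\Bigr)^{\!\infty},
\]
from \cite[Proposition~1.1]{HS}, which was already exploited in the proof of Proposition~\ref{Prop: LC is a minial prime}. Given $f\in L_{\mathcal{C}_W}$, there exists a monomial $u$ in the variables indexed by $V(\mathcal{C}_W)$ with $uf\in I_{\mathrm{adj}}(\mathcal{C}_W)\subseteq I_{\mathrm{adj}}(\mathcal{C})\subseteq\mathfrak{p}$. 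The key observation is that $V(\mathcal{C}_W)\cap W=\emptyset$: every vertex of $\mathcal{C}_W$ lies in some cell $C$ with $C\cap W=\emptyset$, hence no variable occurring in $u$ is in $\mathfrak{p}$. Primality of $\mathfrak{p}$ therefore yields $f\in\mathfrak{p}$, and $L_{\mathcal{C}_W}\subseteq\mathfrak{p}$ follows.

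Finally, I would invoke Remark~\ref{rem:primePW}: $P_W(\mathcal{C})$ is prime and contains $I_{\mathrm{adj}}(\mathcal{C})$. For this last containment one checks cell-by-cell that every generator $x_ax_b-x_cx_d$ with $C=[a,b]$ either lies in $L_{\mathcal{C}_W}$ (when $C\in\mathcal{C}_W$) or has both monomials divisible by some $x_w$ with $w\in W$ (when $W\cap C$ contains an edge of $C$, hence a diagonal corner and an anti-diagonal corner of $C$). Combined with $P_W(\mathcal{C})\subseteq\mathfrak{p}$, the minimality of $\mathfrak{p}$ over $I_{\mathrm{adj}}(\mathcal{C})$ then forces $\mathfrak{p}=P_W(\mathcal{C})$.

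The one slightly delicate point is the containment $L_{\mathcal{C}_W}\subseteq\mathfrak{p}$, where the disjointness $V(\mathcal{C}_W)\cap W=\emptyset$ must be used crucially to rule out the alternative that the clearing monomial $u$ rather than $f$ lies in $\mathfrak{p}$; everything else is essentially a case analysis on the four edges of a single cell together with primality.
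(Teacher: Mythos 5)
Your proposal is correct and follows essentially the same route as the paper's proof: the same choice $W=\{v : x_v\in\mathfrak{p}\}$, the same primality argument for admissibility, the same saturation identity from \cite[Proposition~1.1]{HS} to get $L_{\mathcal{C}_W}\subseteq\mathfrak{p}$, and the same appeal to minimality at the end. The only (harmless) difference is that you treat $W=\emptyset$ uniformly rather than splitting off the case $\mathfrak{p}=L_{\mathcal{C}}$ as the paper does, and your observation that $V(\mathcal{C}_W)\cap W=\emptyset$ correctly supplies the reason, implicit in the paper, why the clearing monomial $u$ cannot lie in $\mathfrak{p}$.
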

      
    \begin{proof}
   By Proposition \ref{Prop: LC is a minial prime}, $L_{\cC}$ is a minimal prime ideal of $I_{\mathrm{adj}}(\cC)$. If $\mathfrak{p} = L_{\cC}$, then $\mathfrak{p} = P_{W}(\cC)$ by setting $W=\emptyset$. We now assume that $\mathfrak{p} \neq L_{\cC}$. Let 
\[
W = \{a \in V(\cC) : x_a \in \mathfrak{p}\}.
\]
We first show that $W \neq \emptyset$. Suppose, for the sake of contradiction, that $W = \emptyset$, i.e., that $\mathfrak{p}$ contains no variables. Let $f \in L_{\cC}$. By \cite[Proposition 1.1]{HS}, there exists a monomial $u$ in $S_{\cC}$ such that $u f \in I_{\mathrm{adj}}(\cC)$, and thus $u f \in \mathfrak{p}$. If $f \notin \mathfrak{p}$, then $u \in \mathfrak{p}$ by the primality of $\mathfrak{p}$, implying that $\mathfrak{p}$ contains a variable, contradicting the assumption that $W = \emptyset$. Therefore, $f \in \mathfrak{p}$, i.e., $L_{\cC} \subseteq \mathfrak{p}$. By Proposition \ref{Prop: LC is a minial prime} and the minimality of $\mathfrak{p}$, we must have $\mathfrak{p} = L_{\cC}$, contradicting the assumption $\mathfrak{p} \neq L_{\cC}$. Hence, $W \neq \emptyset$.\\
We now show that $W$ is an admissible set of $\mathcal{C}$.  Let $C$ be a cell in $\mathcal{C}$ with $V(C) = \{a,b,c,d\}$ such that $a, b$ and $c, d$ are the diagonal and anti-diagonal corners, respectively, and $W \cap C \neq \emptyset$. Without loss of generality, assume that $a \in W \cap C$. Note that $x_ax_b - x_cx_d \in I_{\mathrm{adj}}(\mathcal{C}) \subseteq \fp$, and since $a \in W$, we have $x_a \in \fp$, which implies that $x_cx_d \in \fp$. Since $\fp$ is prime, either $x_c \in \fp$ or $x_d \in \fp$, that is, either $c \in W$ or $d \in W$. Thus, either $\{a,c\} \subseteq W \cap C$ or $\{a,d\} \subseteq W \cap C$. This implies that $W \cap C$ contains an edge, and hence, $W$ is an admissible set of $\mathcal{C}$.

We now prove that $\mathfrak{p} = P_W(\cC)$. Since $I_{\mathrm{adj}}(\cC_W) \subseteq L_{\cC_W}$, we have
\[
I_{\mathrm{adj}}(\cC) \subseteq (x_w : w \in W) + L_{\cC_W} = P_W(\cC).
\]
Since $P_W(\cC)$ is prime, see Remark~\ref{rem:primePW}, to conclude that $\mathfrak{p} = P_W(\cC)$, it suffices to show that $P_W(\cC) \subseteq \mathfrak{p}$, and in particular that $L_{\cC_W} \subseteq \mathfrak{p}$. Let $f \in L_{\cC_W}$. By \cite[Proposition 1.1]{HS}, we have
 \[
 L_{\cC_W} = I_{\mathrm{adj}}(\cC_W) : \left(\prod_{a \in V(\cC) \setminus W} x_a\right)^{\infty}.
 \]
So, there exists a monomial $u$ in $K[x_a : a \in V(\cC) \setminus W]$ such that $uf \in I_{\mathrm{adj}}(\cC)$. Since $I_{\mathrm{adj}}(\cC) \subseteq \mathfrak{p}$, it follows that $uf \in \mathfrak{p}$. Because $\mathfrak{p}$ is prime and $u \notin \mathfrak{p}$, we deduce that $f \in \mathfrak{p}$. In conclusion, $P_W(\cC)$ is a prime ideal satisfying $I_{\mathrm{adj}}(\cC) \subseteq P_W(\cC) \subseteq \mathfrak{p}$. By the minimality of $\mathfrak{p}$, we obtain $P_W(\cC) = \mathfrak{p}$, as desired.
\end{proof}

The following remark shows that the converse of Theorem~\ref{Thm: admissible set} does not hold; namely, not every admissible set defines a minimal prime of the associated adjacent $2$-minor ideal.

   \begin{Remark}\label{ex:notmin}\rm
Consider the polyomino $\cP$ shown in Figure~\ref{Figure: Example weakly conn. coll. of cells} (B). The set $W = \{(1,3),(2,3)\}$ is an admissible set of $\cP$. We briefly show that $P_W(\cP)$ is not a minimal prime of $I_{\mathrm{adj}}(\cP)$. Indeed, by \cite[Theorem~3.3]{CNU}, the ideal $I_{\cP\setminus\{C\}}$ is prime, equivalently, by \cite[Theorem~3.6]{Q}, we have $I_{\cP\setminus\{C\}} = L_{\cP\setminus\{C\}}$. On the other hand, $P_W(\cP) = (x_{1,3},x_{2,3}) + I_{\cP\setminus\{C\}}$.
Since $I_{\mathrm{adj}}(\cP) \subset I_\cP \subset P_W(\cP)$ and $I_\cP$ is a prime ideal by \cite[Theorem~3.3]{CNU}
(more precisely, by Proposition~\ref{Prop: LC is a minial prime}, $I_\cP$ is a minimal prime of $I_{\mathrm{adj}}(\cP)$),
it follows that $P_W(\cP)$ is not a minimal prime of $I_{\mathrm{adj}}(\cP)$. 

However, by using \textit{Macaulay2} (\cite{M2}), in particular the functions 
\href{https://www.macaulay2.com/doc/Macaulay2/share/doc/Macaulay2/PolyominoIdeals/html/_adjacent2__Minor__Ideal.html}{\texttt{adjacent2MinorIdeal}} in \textit{PolyominoIdeals} (\cite{CNJ})
and 
\href{https://macaulay2.com/doc/Macaulay2-1.24.05/share/doc/Macaulay2/Binomials/html/_binomial__Minimal__Primes.html}{\texttt{binomialMinimalPrimes}} in \textit{Binomials} (\cite{Binomials}), we can compute all minimal prime ideals of $I_{\mathrm{adj}}(\cP)$. These are listed below, and one observes that $P_W(\cP)$ does not occur among them.
\[
\begin{aligned}
\fp_1 &= (
x_{2,2}x_{1,1}-x_{2,1}x_{1,2},\,
x_{2,3}x_{1,1}-x_{2,1}x_{1,3},\,
x_{2,3}x_{1,2}-x_{2,2}x_{1,3}, \, x_{3,2}x_{1,1}-x_{3,1}x_{1,2},\\
&x_{3,2}x_{2,1}-x_{3,1}x_{2,2}
) =  I_{\cP}=P_{\emptyset}(\cP),\\
\fp_2 &= (x_{1,2},\,x_{2,1},\,x_{2,2})=P_{W_1}(\cP), \text{ where } W_1=\{(1,2),(2,1),(2,2)\},\\
\fp_3 &= (x_{2,1},\,x_{2,2},\,x_{2,3}) =P_{W_2}(\cP), \text{ where } W_2=\{(2,1),(2,2),(2,3)\},\\
\fp_4 &= (x_{1,2},\,x_{2,2},\,x_{3,2}) =P_{W_3}(\cP), \text{ where } W_3=\{(1,2),(2,2),(3,2)\}.
\end{aligned}
\]
\end{Remark}

\section{From Unmixedness to Complete Intersection for adjacent $2$-minor ideals}\label{Section: CI = Unmixed}

The description of minimal primes in Theorem~\ref{Thm: admissible set} yields the following equivalence among several well-known algebraic properties of adjacent $2$-minor ideals.

\begin{Theorem} \label{Thm: Unmixed = CI}
Let $\cC$ be a collection of cells. Then the following conditions are equivalent:
\begin{enumerate}
\item $I_{\mathrm{adj}}(\cC)$ is a complete intersection;
\item $S_\cC/I_{\mathrm{adj}}(\cC)$ is Gorenstein;
\item $S_\cC/I_{\mathrm{adj}}(\cC)$ is level;
\item $S_\cC/I_{\mathrm{adj}}(\cC)$ is Cohen–Macaulay;
\item $I_{\mathrm{adj}}(\cC)$ is unmixed.
\end{enumerate}
Moreover, if $I_{\mathrm{adj}}(\cC)$ satisfies above conditions, then $\mathrm{ht}(I_{\mathrm{adj}}(\cC)) =|\cC|$.
\end{Theorem}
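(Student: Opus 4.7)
The plan is to split the five-way equivalence into two parts: the chain $(1) \Rightarrow (2) \Rightarrow (3) \Rightarrow (4) \Rightarrow (5)$, which holds for any homogeneous ideal in a polynomial ring by standard commutative algebra, and the non-trivial step $(5) \Rightarrow (1)$, which will follow by combining the description of minimal primes given in Theorem~\ref{Thm: admissible set} with a simple generator count. The height assertion in the ``moreover'' clause will drop out of the same calculation.

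For the easy chain, I would just invoke \cite{BH}: a complete intersection quotient of a polynomial ring is Gorenstein; a Gorenstein graded $K$-algebra is level because its socle is one-dimensional, hence trivially concentrated in a single degree; levelness requires Cohen–Macaulayness by definition; and a Cohen–Macaulay quotient of a polynomial ring has an unmixed defining ideal since such rings are equidimensional. Nothing specific to adjacent $2$-minors is used in these steps.

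The main step is $(5) \Rightarrow (1)$. By construction, $I_{\mathrm{adj}}(\cC)$ is generated by exactly $|\cC|$ binomials—one adjacent $2$-minor per cell of $\cC$—so Krull's principal ideal theorem gives $\mathrm{ht}(I_{\mathrm{adj}}(\cC)) \leq |\cC|$. On the other hand, specialising Theorem~\ref{Thm: admissible set} to the admissible set $W = \emptyset$ (equivalently, using Proposition~\ref{Prop: LC is a minial prime} directly), the lattice ideal $L_{\cC} = P_{\emptyset}(\cC)$ is a minimal prime of $I_{\mathrm{adj}}(\cC)$ of height exactly $|\cC|$, by Remark~\ref{rem:primePW}. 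Assuming (5), every minimal prime of $I_{\mathrm{adj}}(\cC)$ shares the common height $\mathrm{ht}(I_{\mathrm{adj}}(\cC))$, and therefore
\[
\mathrm{ht}(I_{\mathrm{adj}}(\cC)) \;=\; \mathrm{ht}(L_{\cC}) \;=\; |\cC|.
\]
Since $S_{\cC}$ is Cohen–Macaulay (being a polynomial ring) and $I_{\mathrm{adj}}(\cC)$ is generated by $|\cC|$ elements attaining the maximum possible height, these generators must form a regular sequence by the standard characterisation of complete intersections in Cohen–Macaulay rings. This proves (1), and the displayed height equality simultaneously establishes the ``moreover'' assertion.

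\textbf{Expected obstacle.} I do not anticipate any serious difficulty: the substantive work has already been carried out in Theorem~\ref{Thm: admissible set} and Proposition~\ref{Prop: LC is a minial prime}, which together pin down $L_{\cC}$ as a minimal prime of known height. The only point worth flagging is the convention of ``unmixed'' used in (5); luckily, both the ``all minimal primes have the same height'' and the stronger ``all associated primes have the same height'' versions deliver the equality of heights of minimal primes, which is all the argument actually needs.
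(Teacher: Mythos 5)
Your proposal is correct and follows essentially the same route as the paper: the chain $(1)\Rightarrow\cdots\Rightarrow(5)$ is dispatched by standard facts for graded ideals, and $(5)\Rightarrow(1)$ is obtained by noting that $L_{\cC}$ is a minimal prime of height $|\cC|$ (Proposition~\ref{Prop: LC is a minial prime}), so unmixedness forces $\mathrm{ht}(I_{\mathrm{adj}}(\cC))=|\cC|=|\cG(I_{\mathrm{adj}}(\cC))|$, which yields the complete intersection property and the ``moreover'' clause at once. If anything, you spell out slightly more carefully than the paper why the height equals $|\cC|$ under unmixedness, but the argument is the same.
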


\begin{proof}
The implications $(1) \Rightarrow (2) \Rightarrow (3) \Rightarrow (4) \Rightarrow (5)$ always hold for graded ideals. We now prove the converse implication $(5) \Rightarrow (1)$. Suppose that $I_{\mathrm{adj}}(\cC)$ is unmixed, that is, all its minimal primes have the same height. By Proposition \ref{Prop: LC is a minial prime}, we have $\mathrm{ht}(I_{\mathrm{adj}}(\cC)) = |\cC|$. This value coincides with $|\mathcal{G}(I_{\mathrm{adj}}(\mathcal{C}))|$. Therefore, $I_{\mathrm{adj}}(\cC)$ is a complete intersection.

Assume that $I_{\mathrm{adj}}(\cC)$ satisfies any of the given conditions. Then $I_{\mathrm{adj}}(\cC)$ is an unmixed ideal and by Proposition~\ref{Prop: LC is a minial prime}, it follows that $\mathrm{ht}(I_{\mathrm{adj}}(\cC)) =|\cC|$.
\end{proof}

As a consequence of Theorem~\ref{Thm: Unmixed = CI}, we derive two necessary combinatorial conditions on the shape of $\mathcal{C}$ that must be satisfied whenever $I_{\mathrm{adj}}(\mathcal{C})$ has one of the equivalent properties listed therein.

We recall that the polyominoes shown on the left and on the right in Figure~\ref{fig: square and X-pento} are known in the literature as the \textit{square tetromino} and the \textit{X-pentomino}, respectively.
\begin{figure}[h]
    \centering
    \includegraphics[scale=0.8]{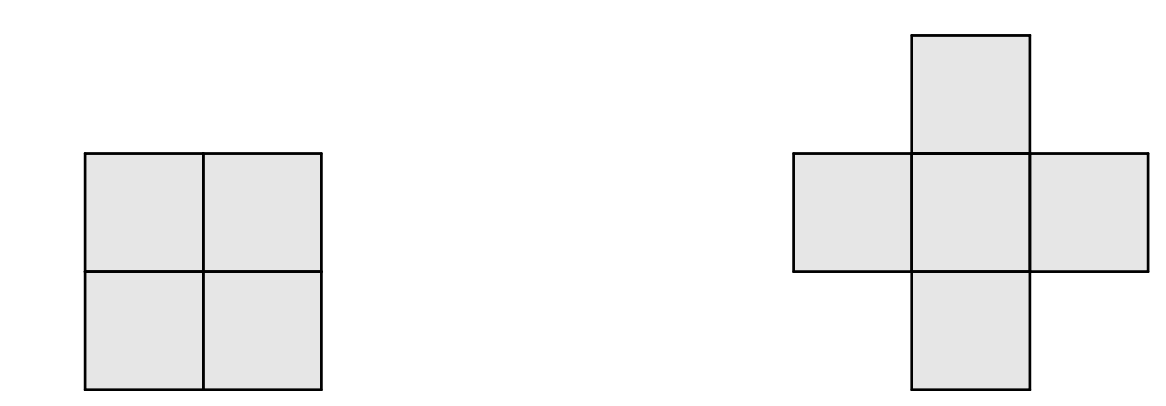}
    \caption{A square tetromino and an $X$-pentomino}
    \label{fig: square and X-pento}
\end{figure}

We next introduce the following definition.
An interval $[a,b]$ with $a=(i,j)$, $b=(k,j)$ and $i<k$ is called a \textit{horizontal edge interval} of $\cC$ if the pairs ${(\ell,j),(\ell+1,j)}$ form edges of cells in $\cC$ for all $\ell\in{i,\dots,k-1}$.
Moreover, if ${(i-1,j),(i,j)}$ and ${(k,j),(k+1,j)}$ do not belong to $E(\cC)$, then $[a,b]$ is called a \textit{maximal horizontal edge interval} of $\cC$. A vertical edge interval and a maximal vertical edge interval can be defined analogously.

In the next two lemmas, we provide two necessary conditions for the adjacent $2$-minor ideal of a collection of cells to satisfy the equivalent conditions in Theorem~\ref{Thm: Unmixed = CI}.

\begin{Lemma}\label{Coro: necessary condition for CI}
Let $\cC$ be a collection of cells. If $I_{\mathrm{adj}}(\cC)$ satisfies the equivalent conditions in Theorem \ref{Thm: Unmixed = CI}, then $\cC$ does not contain a square tetromino.
\end{Lemma}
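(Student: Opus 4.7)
The plan is to argue by contradiction using the minimal–prime description of Section~\ref{Sec: minimal primes}. Suppose $\cC$ contains a square tetromino, and denote its four cells by $C_1,C_2,C_3,C_4$, whose lower-left corners we may take to be $(i,j),(i+1,j),(i,j+1),(i+1,j+1)$. By Theorem~\ref{Thm: Unmixed = CI} the equivalent conditions force $\mathrm{ht}(I_{\mathrm{adj}}(\cC))=|\cC|$, so every prime containing $I_{\mathrm{adj}}(\cC)$ has height at least $|\cC|$. The strategy is to exhibit an admissible set $W\subseteq V(\cC)$ for which $P_{W}(\cC)$ is a prime containing $I_{\mathrm{adj}}(\cC)$ with $\mathrm{ht}(P_{W}(\cC))<|\cC|$, which produces a contradiction.

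The natural candidate is the \emph{maximal horizontal edge interval of $\cC$ at height $y=j+1$} that contains the two middle edges $\{(i,j+1),(i+1,j+1)\}$ and $\{(i+1,j+1),(i+2,j+1)\}$ of the tetromino. Writing this interval as $W=\{(\ell,j+1): \ell\in[i',k']\}$, we have $i'\leq i$ and $k'\geq i+2$, hence $|W|=k'-i'+1\geq 3$.

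The first step is to check that $W$ is admissible. Any cell $C\in\cC$ meeting $W$ must have lower-left corner $(p,q)$ with $q\in\{j,j+1\}$. If $p\in[i',k'-1]$, then both $(p,j+1)$ and $(p+1,j+1)$ lie in $W$, so $W\cap C$ contains either the top edge (case $q=j$) or the bottom edge (case $q=j+1$) of $C$. The only remaining possibilities are $p=i'-1$ and $p=k'$, but each of these would force $\{(i'-1,j+1),(i',j+1)\}$ or $\{(k',j+1),(k'+1,j+1)\}$ to belong to $E(\cC)$, contradicting the maximality of $W$. I expect this admissibility verification to be the main technical point; it is the only place where the maximality of the horizontal edge interval is essential.

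The second step is a counting argument. For each $\ell\in[i',k'-1]$ the edge $\{(\ell,j+1),(\ell+1,j+1)\}$ lies in $E(\cC)$ by definition, so at least one of the cells with lower-left corner $(\ell,j)$ or $(\ell,j+1)$ belongs to $\cC$; this contributes at least $k'-i'$ cells to those meeting $W$. At the two positions $\ell=i$ and $\ell=i+1$, which lie in $[i',k'-1]$, \emph{both} cells $(\ell,j)$ and $(\ell,j+1)$ are in $\cC$ since they form the tetromino, so we gain two extra cells. Letting $N$ denote the number of cells of $\cC$ meeting $W$, this gives $N\geq k'-i'+2>|W|$. Invoking Remark~\ref{rem:primePW} we conclude
\[
\mathrm{ht}(P_{W}(\cC))\;=\;|W|+|\cC_{W}|\;=\;|W|+|\cC|-N\;<\;|\cC|,
\]
which contradicts $\mathrm{ht}(I_{\mathrm{adj}}(\cC))=|\cC|$ and completes the proof.
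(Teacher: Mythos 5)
Your proof is correct, and the core construction coincides with the paper's: both take $W$ to be the maximal horizontal edge interval through the middle of the square tetromino, verify that $W$ is admissible, and show $\mathrm{ht}(P_W(\cC))=|W|+|\cC_W|<|\cC|$ by counting the cells of $\cC$ that meet $W$ (the paper packages the count as an injective, non-surjective map from $W$ into those cells, which is equivalent to your bound $N\geq |W|+1$). Where you genuinely diverge is the endgame. The paper argues directly that $I_{\mathrm{adj}}(\cC)$ is not unmixed, which forces it to prove that $P_W(\cC)$ is a \emph{minimal} prime of $I_{\mathrm{adj}}(\cC)$ — an extra step requiring Theorem~\ref{Thm: admissible set} and an analysis of which admissible sets $W'\subsetneq W$ can occur. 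You instead invoke the ``Moreover'' clause of Theorem~\ref{Thm: Unmixed = CI}, namely $\mathrm{ht}(I_{\mathrm{adj}}(\cC))=|\cC|$, and observe that since the height of an ideal is the minimum of the heights of the primes containing it, the mere existence of the prime $P_W(\cC)\supseteq I_{\mathrm{adj}}(\cC)$ of height $<|\cC|$ is already a contradiction; minimality of $P_W(\cC)$ is never needed. This is not circular (the height statement in Theorem~\ref{Thm: Unmixed = CI} is proved independently from Proposition~\ref{Prop: LC is a minial prime}) and it shortens the argument. You also spell out the admissibility of $W$, including where maximality of the edge interval is used, which the paper only asserts. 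Both of these are improvements in economy and completeness over the published proof.
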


\begin{proof}
We prove that if $\cC$ contains a square tetromino, then $I_{\mathrm{adj}}(\cC)$ is not unmixed. By Proposition \ref{Prop: LC is a minial prime}, we know that $L_{\cC}$ is a minimal prime of $I_{\mathrm{adj}}(\cC)$ with $\mathrm{ht}(L_\cC) = \vert \cC \vert$. We now construct another minimal prime of $I_{\mathrm{adj}}(\cC)$ whose height is strictly smaller than $\vert \cC \vert$, which implies that $I_{\mathrm{adj}}(\cC)$ is not unmixed. We begin by setting up some notation that will be used throughout the proof, with reference to Figure \ref{fig: A square for necessary cond}. 

\begin{figure}[h]
    \centering
    \includegraphics[scale=1]{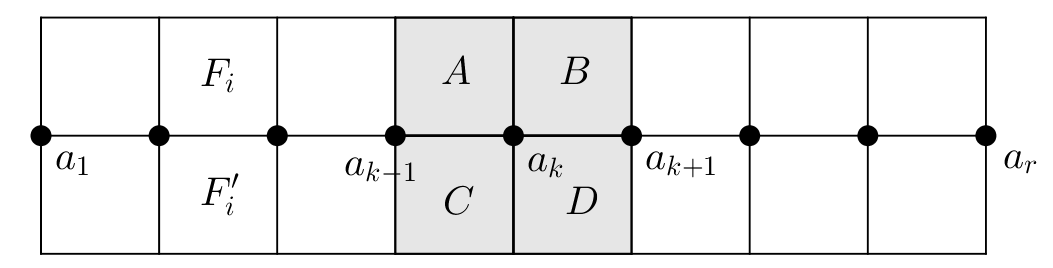}
    \caption{Notation for the proof of Lemma \ref{Coro: necessary condition for CI}}
    \label{fig: A square for necessary cond}
\end{figure}

Let $\cS = \{A,B,C,D\}$ be a square tetromino contained in $\cC$. Let $[a_1,a_r] = \{a_1 < a_2 < \dots < a_r\}$, with $r \geq 3$, be the maximal horizontal edge interval of $\cC$ containing the edges $A \cap C$ and $B \cap D$. Then there exists an index $k \in [r-1]$ such that $a_k$ is the common vertex of $A, B, C$, and $D$. For all $i = 1, \dots, r-1$, we denote by $F_i$ and $F_i'$ the cells having $\{a_i, a_{i+1}\}$ as a common edge. In particular, $F_{k-1} = A$, $F_{k-1}' = C$, $F_{k} = B$, and $F_{k}' = D$.

Set $W = [a_1, a_r]$. Note that $W$ is an addmissible set of $\cC$. With the help of Figure \ref{fig: A square for necessary cond}, we observe the following:

\begin{itemize}
    \item The four cells $F_{k-1}, F_{k-1}', F_{k}$, and $F_{k}'$ all belong to $\cC$;
    \item For all $i \in \{1, \dots, k-2, k+1, \dots, r-1\}$, either $F_i$ or $F_i'$ belongs to $\cC$, since $[a_1, a_r]$ is an edge interval of $\cC$. Note that both may belong to $\cC$ at the same time;
    \item For all $i = 1, \dots, r-1$, neither $F_i$ nor $F_i'$ belongs to $\cC_W$, by definition of $\cC_W$.
\end{itemize}

Consider $P_W(\cC) = L_{\cC_W} + (x_v : v \in W)$. We show that $P_W(\cC)$ is a minimal prime of $I_{\mathrm{adj}}(\cC)$ with $\mathrm{ht}(P_W(\cC)) < \vert \cC \vert$.  First we show that $\mathrm{ht}(P_W(\cC)) < \vert \cC \vert$. Note that $\mathrm{ht}(P_W(\cC)) = \vert \cC_W \vert + \vert W \vert$, as discussed in Remark~\ref{rem:primePW}.

Now, we assign each vertex of $W = [a_1, a_r]$ to a cell in $\mathcal{F} =\cC \cap  \{F_i, F_i' : i \in [r-1]\}$. Define the following map $\phi : W \rightarrow \mathcal{F}$ by
\[
\phi(a_i) =
\begin{cases}
    F_i, & \text{if } F_i \in \cC \text{ and } F_i' \notin \cC, \\[4pt]
    F_i', & \text{if } F_i \notin \cC \text{ and } F_i' \in \cC, \\[4pt]
    F_i, & \text{if } F_i \in \cC \text{ and } F_i' \in \cC,
\end{cases}
\qquad \text{for } i \in [r-1], \quad \text{and let } \phi(a_r) = D = F_{k}'.
\]

The map $\phi$ is clearly injective but not surjective, because there is no vertex $v$ in $W$ such that $\phi(v) = C$. Hence $\vert W \vert < \vert \cF \vert$. Moreover, since $\cC_W = \{C \in \cC : C \cap W = \emptyset\}$ and $\cF = \{C \in \cC : C \cap W \neq \emptyset\}$, we have $\vert \cC_W \vert + \vert \cF \vert = \vert \cC \vert$. Therefore,
\[
\vert \cC_W \vert + \vert W \vert < \vert \cC_W \vert + \vert \cF \vert = \vert \cC \vert,
\]
and thus $\mathrm{ht}(P_W(\cC)) < \vert \cC \vert$, as desired.

To conclude the proof, we show that $P_W(\cC)$ is a minimal prime of $I_{\mathrm{adj}}(\cC)$. It follows from Remark~\ref{rem:primePW} that $P_W(\cC)$ is prime. Suppose that $P_W(\cC)$ is not a minimal prime of $I_{\mathrm{adj}}(\cC)$, so there exists a prime ideal $\mathfrak{p}'$ such that $I_{\mathrm{adj}}(\cC) \subseteq \mathfrak{p}' \subset P_W(\cC)$. It is not restrictive to assume that $\mathfrak{p}'$ is a minimal prime of $I_{\mathrm{adj}}(\cC)$. Then there exists an admissible set $W'$ of $\cC$ such that $\mathfrak{p}' = P_{W'}(\cC) = L_{\cC_{W'}} + (x_v : v \in W') \subset L_{C_W} + (x_v : v \in W)$ by Theorem~\ref{Thm: admissible set}. Hence the variables contained in $\mathfrak{p}'$ form a proper subset of $\{x_v : v \in W\}$, that is, $W' \subset W$. Since $W'$ is an admissible set, the only possibility is that $W' = \emptyset$. Then $\mathfrak{p}' = L_{\cC}$, but this leads to a contradiction because $\mathfrak{p}' \subset P_W(\cC)$ implies $\mathrm{ht}(\mathfrak{p}') < \mathrm{ht}(P_W(\cC)) < \vert \cC \vert$, while $\mathfrak{p}' = L_{\cC}$ gives $\mathrm{ht}(\mathfrak{p}') = \vert \cC \vert$.

This shows that  $P_W(\cC)$ is a minimal prime of $I_{\mathrm{adj}}(\cC)$ with $\mathrm{ht}(P_W(\cC)) < \vert \cC \vert$, whereas the minimal prime $L_{\cC}$ has height equal to $\vert \cC \vert$. This concludes the proof.
\end{proof}

\begin{Lemma}\label{Coro: necessary condition for CI with X pento}
Let $\cC$ be a collection of cells. If $I_{\mathrm{adj}}(\cC)$ satisfies the equivalent conditions in Theorem \ref{Thm: Unmixed = CI}, then $\cC$ does not contain an $X$-pentomino.
\end{Lemma}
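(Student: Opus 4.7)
The plan is to follow the strategy used in the proof of Lemma~\ref{Coro: necessary condition for CI}: assuming that $\cC$ contains an $X$-pentomino $\mathcal{X} = \{C_0, C_T, C_B, C_L, C_R\}$ with central cell $C_0$ and arm cells attached on its top, bottom, left, and right edges, I construct an admissible set $W$ of $\cC$ such that $P_W(\cC)$ is a minimal prime of $I_{\mathrm{adj}}(\cC)$ of height strictly less than $|\cC|$. Since $L_\cC$ is already a minimal prime of height $|\cC|$ by Proposition~\ref{Prop: LC is a minial prime}, this yields two minimal primes of different heights, so $I_{\mathrm{adj}}(\cC)$ is not unmixed.

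The natural starting candidate is $W_0 := V(C_0)$, the four corners of $C_0$. Restricted to $\mathcal{X}$, the set $W_0$ is immediately admissible: it intersects $C_0$ in $V(C_0)$ (which contains all four edges of $C_0$) and each arm cell in the two vertices forming the edge it shares with $C_0$. In the ambient $\cC$ the only cells whose intersection with $W_0$ is a single vertex are the four "diagonal" cells $D$ sharing exactly one corner $v_i$ with $C_0$. For each such $D \in \cC$ I adjoin to $W$ a vertex of $V(D)$ adjacent to $v_i$ inside $D$; if a newly added vertex creates a further cell with single-vertex intersection, I repeat. Since $V(\cC)$ is finite this cascading process terminates, producing an admissible set $W \supseteq W_0$.

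The key counting step is to construct an injection $\phi : W \to \cF$, where $\cF = \{C \in \cC : V(C) \cap W \neq \emptyset\}$, whose image does not contain $C_0$. I would send each corner $v_i$ of $C_0$ to one of the two arm cells containing it, arranged so that the four images are the four distinct arm cells, for instance $\phi(v_1) = C_B$, $\phi(v_2) = C_R$, $\phi(v_3) = C_L$, $\phi(v_4) = C_T$. Each cascade vertex $u$, added to resolve the admissibility of a cell $C_u$, is mapped to $C_u$. Injectivity follows from three observations: distinct cascade steps target distinct cells (admissibility is preserved once achieved); the arm cells of $\mathcal{X}$ are never targeted by the cascade, since they are already admissible from $W_0$; and the cascade targets are non-$X$-pentomino cells, hence disjoint from the arm cells. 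Because $C_0 \in \cF$ (as $V(C_0) = W_0 \subseteq W$) but $C_0 \notin \phi(W)$, one gets $|\cF| \geq |W|+1$, so $|W| + |\cC_W| = |W| + |\cC| - |\cF| \leq |\cC| - 1$, and by Remark~\ref{rem:primePW} this gives $\mathrm{ht}(P_W(\cC)) < |\cC|$.

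Finally, the minimality of $P_W(\cC)$ is verified as in the tetromino argument: any prime $\mathfrak{p}' \subsetneq P_W(\cC)$ containing $I_{\mathrm{adj}}(\cC)$ must, by Theorem~\ref{Thm: admissible set}, have the form $P_{W'}(\cC)$ with $W' \subsetneq W$ admissible; but the removal of any single vertex of $W$ violates admissibility---removing a corner of $C_0$ leaves an arm cell of $\mathcal{X}$ with only one vertex in $W'$, and removing a cascade vertex leaves the cell it was added to handle with only one vertex in $W'$---so $W' = \emptyset$, forcing $\mathfrak{p}' = L_\cC$, which contradicts $\mathrm{ht}(L_\cC) = |\cC| > \mathrm{ht}(P_W(\cC))$. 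The main difficulty I expect lies in the combinatorial bookkeeping of the cascade, in particular checking that a single added vertex resolving several problematic cells simultaneously does not spoil the injectivity of $\phi$, and that the cascade never accidentally "targets" an arm cell or $C_0$ of $\mathcal{X}$---a subtlety made possible by the fact that all five cells of $\mathcal{X}$ are already admissible from $W_0$ itself.
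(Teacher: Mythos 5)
Your overall strategy --- exhibit an admissible set $W$ with $\mathrm{ht}(P_W(\cC)) < |\cC|$ and play it against the minimal prime $L_{\cC}$ of height $|\cC|$ --- is exactly the paper's. But you missed the observation that collapses the whole difficulty: if any of the four cells diagonally adjacent to the centre $C_0$ of the $X$-pentomino belongs to $\cC$, then that cell together with $C_0$ and two of the arms forms a square tetromino, so Lemma~\ref{Coro: necessary condition for CI} already applies. The paper therefore assumes all four diagonal cells are absent, in which case $W = V(C_0)$ is already admissible, the only cells of $\cC$ meeting $W$ are the five cells of the pentomino, and the count $|W| + |\cC_W| = 4 + (|\cC| - 5) = |\cC| - 1$ is immediate --- no cascade, no injection. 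Your cascading closure and the injection $\phi$ can be made to work for the counting (each cell is targeted at most once, and the five pentomino cells are never targeted, so $C_0 \in \cF \setminus \phi(W)$), but it is considerably heavier bookkeeping for the same inequality.

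The genuine gap is in your minimality verification. First, showing that deleting any \emph{single} vertex of $W$ destroys admissibility does not rule out admissible proper subsets obtained by deleting several vertices; the correct argument (as in the tetromino proof) is a propagation: starting from any $v \in W'$, admissibility of $W'$ at the cells through $v$ forces further vertices of $W$ into $W'$ until $W' = W$. Second, your claim that removing a cascade vertex leaves the cell it was added to handle with only one vertex in $W'$ is false in general: that cell may have acquired additional $W$-vertices at later stages of the cascade, so its intersection with $W'$ can still contain an edge; for a cascaded $W$ the propagation need not close up, so minimality of $P_W(\cC)$ is genuinely in doubt. Fortunately the conclusion survives without it: since $I_{\mathrm{adj}}(\cC) \subseteq P_W(\cC)$, some minimal prime $\mathfrak{p}'$ of $I_{\mathrm{adj}}(\cC)$ is contained in $P_W(\cC)$, hence $\mathrm{ht}(\mathfrak{p}') \leq \mathrm{ht}(P_W(\cC)) < |\cC| = \mathrm{ht}(L_{\cC})$ and $\mathfrak{p}' \neq L_{\cC}$, so $I_{\mathrm{adj}}(\cC)$ has minimal primes of two different heights and is not unmixed. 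Either make this observation explicit, or (better) adopt the paper's reduction to the square-tetromino case, where $W = V(C_0)$ and the propagation argument for minimality is the same easy one as in Lemma~\ref{Coro: necessary condition for CI}.
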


\begin{proof}
We show that if $\cC$ contains an $X$-pentomino, then $I_{\mathrm{adj}}(\cC)$ is not unmixed. Let $\cP$ be an $X$-pentomino contained in $\cC$, and set the notation as shown in Figure \ref{fig: X pento notations}.

\begin{figure}[h]
    \centering
    \includegraphics[scale=0.7]{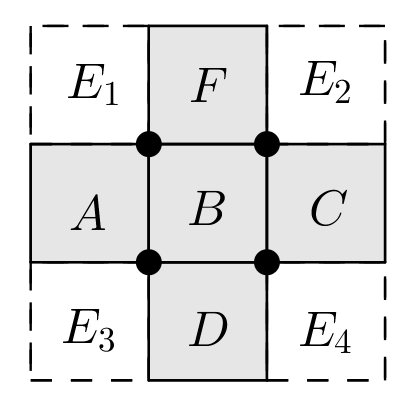}
    \caption{Notation of the $X$-pentomino used in the proof of Lemma \ref{Coro: necessary condition for CI with X pento}.}
    \label{fig: X pento notations}
\end{figure}

Observe that, if $E_i \in \cC$ for some $i \in [4]$, then $\cC$ contains a square tetromino. By Lemma \ref{Coro: necessary condition for CI}, it follows that $I_{\mathrm{adj}}(\cC)$ is not unmixed. Hence, we may assume that $E_i \notin \cC$ for all $i \in [4]$. In this case, $V(B)$ is an admissible set of $\cC$. 

Set $W = V(B)$ and consider $P_W(\cC)$. Note that $\mathrm{ht}(P_W(\cC)) = \vert \cC_W \vert + \vert W \vert$. On the other hand, we have $\vert \cC_W \vert +5= \vert C \vert $ and $\vert W\vert =4$. Therefore, $\mathrm{ht}(P_W(\cC)) =  \vert \cC \vert -1 <  \vert \cC \vert $. Following same arguments as in Lemma \ref{Coro: necessary condition for CI}, one verifies that $P_W(\cC)$ is a minimal prime of $I_{\mathrm{adj}}(\cC)$. Therefore, $I_{\mathrm{adj}}(\cC)$ is not unmixed, as desired. Consequently, $I_{\mathrm{adj}}(\cC)$ does not satisfy any of the equivalent properties listed in Theorem \ref{Thm: Unmixed = CI}. 
\end{proof}

We now show that, for convex collections of cells, the absence of both a square tetromino and an $X$-pentomino provides a characterization of when the adjacent $2$-minor ideal satisfies one of the equivalent properties in
Theorem~\ref{Thm: Unmixed = CI}.

We begin by recalling the definition of convexity and some related notions. A collection of cells $\cC$ is called \emph{row convex} (resp.\ \emph{column convex}) if, for any two cells $A$ and $B$ of $\cC$ lying in the same row (resp.\ the same column), the cell interval $[A,B]$ is contained in $\cC$. If $\cC$ is both row and column convex, then $\cC$ is called \emph{convex} (see Figure~\ref{fig:Ferrer diagram and stack} (A)). 

\noindent Among convex polyominoes, there is a well-known subclass that will be particularly useful for our purpose.  
Let $\cC$ be a convex polyomino, and let $[a,b]$ be the minimal bounding box of $\cC$ (with $c,d$ being the anti-diagonal corners). We say that $\cC$ is a \emph{parallelogram} if $V(\cC)$ contains either the pair $a,b$ or the pair $c,d$ (see Figure~\ref{fig:Ferrer diagram and stack} (B)).

\begin{figure}[h]
    \centering
    \subfloat[]{\includegraphics[scale=0.6]{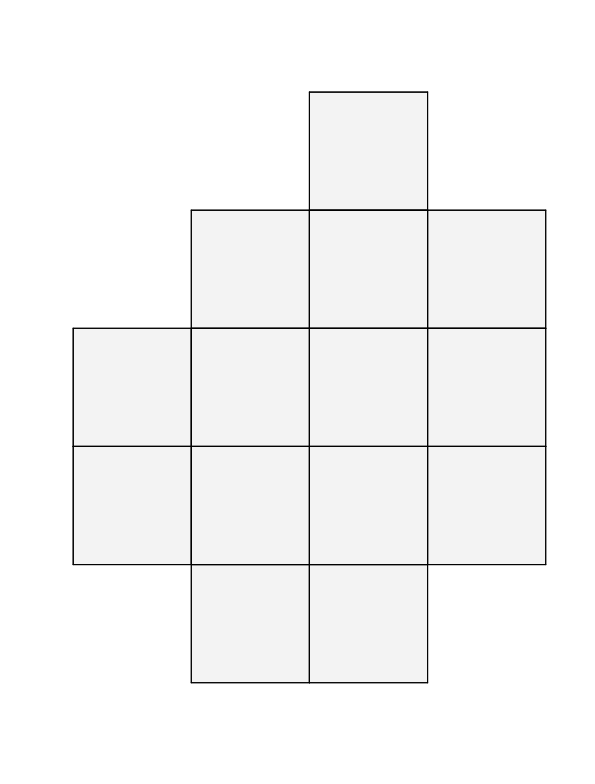}}\qquad\qquad
    \subfloat[]{\includegraphics[scale=0.6]{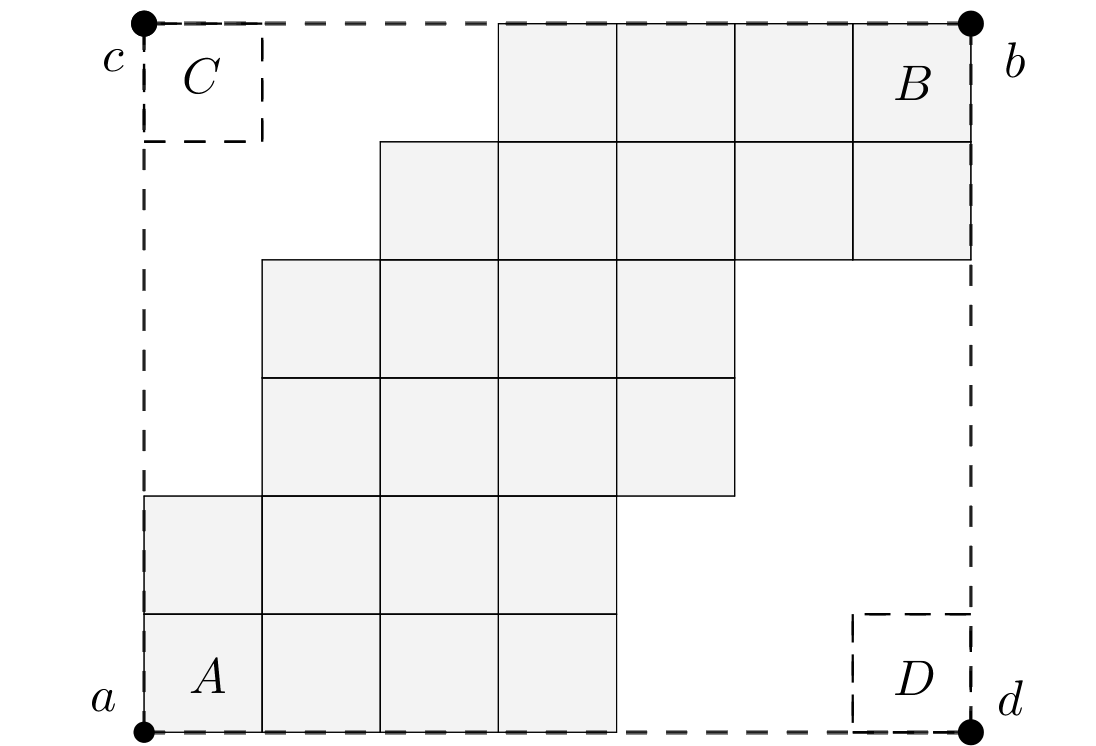}}
    \caption{A convex polyomino and a parallelogram.}
    \label{fig:Ferrer diagram and stack}
\end{figure}

We now describe the possible shapes of convex polyominoes containing neither a square tetromino nor an $X$-pentomino. For the discussion that follows, it is convenient to recall that a polyomino $\cP$ is called a \textit{path polyomino} if either $\cP=\emptyset$, or $\cP$ consists of a cell, or there exist two distinct cells $Q$ and $R$ in $\cP$ such that $\cP$ coincides with the path connecting $Q$ and $R$. Moreover, a \textit{parallelogram path polyomino} is a path polyomino whose shape is that of a parallelogram; see Figure~\ref{Figure: convex polyominoes} (A) for an illustration.

\begin{Discussion}\rm\label{Discussion: convex polyomino}
Let $\cC$ be a convex polyomino not containing a square tetromino or an $X$-pentomino, and $[a,b]$ be the minimal bounding box of $\cC$ (with $c,d$ being the anti-diagonal corners). First, observe that in order to avoid a square tetromino, every maximal interval of $\cC$ must have the form $\cR=[C_1,C_2]$, where $C_1$ and $C_2$ are either in horizontal or vertical position. 

\smallskip
\noindent
\textup{(1)}  If both $a,b\in V(\cC)$ (or $c,d\in V(\cC)$), then $\cC$ has a parallelogram shape; in particular, $\cC$ is a parallelogram path polyomino (see Figure~\ref{Figure: convex polyominoes}~(A)). 

\smallskip
\noindent
\textup{(2)} Assume that $a\notin V(\cC)$ and $b\in V(\cC)$. Since $[a,b]$ is the minimal bounding box of $\cC$, there exist two cells $W$ and $S$ in $\cC$ such that $E(W)\cap [a,c]\neq \emptyset$ and $E(S)\cap [a,d]\neq \emptyset$. The maximal vertical and horizontal intervals of $\cC$ containing $W$ and $S$ intersect in a cell, denoted by $A$; the intersection cannot be empty due to the convexity of $\cC$. 
Set $\cT_W=[W,A]$ and $\cT_S=[S,A]$, which we call the
\emph{West-tail} and the \emph{South-tail}, respectively. 

If $b \in A$, then $\mathcal{C}$ is a parallelogram path consisting of two maximal intervals. Assume now that $b \notin A$. The cells adjacent to $A$ to the North and to the East cannot both belong to $\mathcal{C}$, since otherwise $\mathcal{C}$ would contain an $X$-pentomino.
Let $A'$ denote the unique adjacent cell of $A$ in $\mathcal{C}$ lying either to the North or to the East, and let $a'$ be its lower-left corner. Then the interval $[a',b]$ is the minimal bounding box of $\cP:=\cC\setminus (\cT_W\cup \cT_S)$. The polyomino $\cP$ is convex and satisfies $a',b \in V(\mathcal{P})$; hence, it is a parallelogram path. Therefore, in this case, $\cC$ is either a parallelogram path or consists of two intervals, $\cT_W$ and $\cT_S$ together with a parallelogram path polyomino (see Figure~\ref{Figure: convex polyominoes}~(B)).

\smallskip
\noindent
\textup{(3)} The case $a\in V(\cC)$ and $b\notin V(\cC)$ can be treated analogously. There exist two cells $E$ and $N$ in $\cC$ such that $E$ has an edge in $[d,b]$ and $N$ has an edge in $[c,b]$. The maximal vertical and horizontal intervals of $\cC$ containing $E$ and $N$ intersect in a cell, denoted by $B$. If $a\in B$, then $\cC$ is a parallelogram path consisting of two maximal intervals. Otherwise, denote by $\cT_N=[B,N]$ and $\cT_E=[B,E]$ the \textit{North-tail} and \textit{East-tail}, respectively. Then $\cP:=\cC\setminus (\cT_N\cup \cT_E)$ is a parallelogram path. Hence, in this case, $\cC$ is either a parallelogram path or consists of the two intervals $\cT_N$ and $\cT_E$ together with a parallelogram path polyomino.

\smallskip
\noindent
\textup{(4)} Finally, assume $a,b\notin V(\cC)$. As discussed above, there exist the tails $\cT_S$, $\cT_N$, $\cT_W$, and $\cT_E$, with $\cT_S\cap \cT_W=\{A\}$ and $\cT_N\cap \cT_E=\{B\}$. Since $\cC$ does not contain $X$-pentominoes, the cells $A$ and $B$ are distinct. 

If $E(A) \cap E(B) \neq \emptyset$, then $\mathcal{C}$ consists only of the four tails $\mathcal{T}_S$, $\mathcal{T}_N$, $\mathcal{T}_W$, and $\mathcal{T}_E$. In this case, the remaining parallelogram path is empty; by convention, the empty set is regarded as a collection of cells.

If $E(A)\cap E(B)=\emptyset$, then $\cC$ consists of the four tails $\cT_S$, $\cT_N$, $\cT_W$, and $\cT_E$ together with $\cP:=\cC\setminus (\cT_S\cup\cT_N\cup \cT_W\cup \cT_E)$, where $\cP$ is a parallelogram path (see Figure~\ref{Figure: convex polyominoes}~(C)).

For a convex collection of cells $\mathcal{C}$ containing neither square tetrominoes nor $X$-pentominoes, a similar argument applies. In particular, suppose that $\mathcal{C}$ consists of $n \geq 2$ connected components $\mathcal{C}_1, \dots, \mathcal{C}_n$, arranged from the lower-left to
the upper-right. Then:
\begin{itemize}
\item $\cC_1$ is a parallelogram path, possibly together with a West-tail and a South-tail;
\item $\cC_i$ is a parallelogram path for all $i = 2, \dots, n-1$;
\item $\cC_n$ is a parallelogram path, possibly together with an East-tail and a North-tail.
\end{itemize}
See Figure~\ref{Figure: convex polyominoes}~(D) for an illustration.
\begin{figure}[h]
    \centering
    \subfloat[]{\includegraphics[scale=0.6]{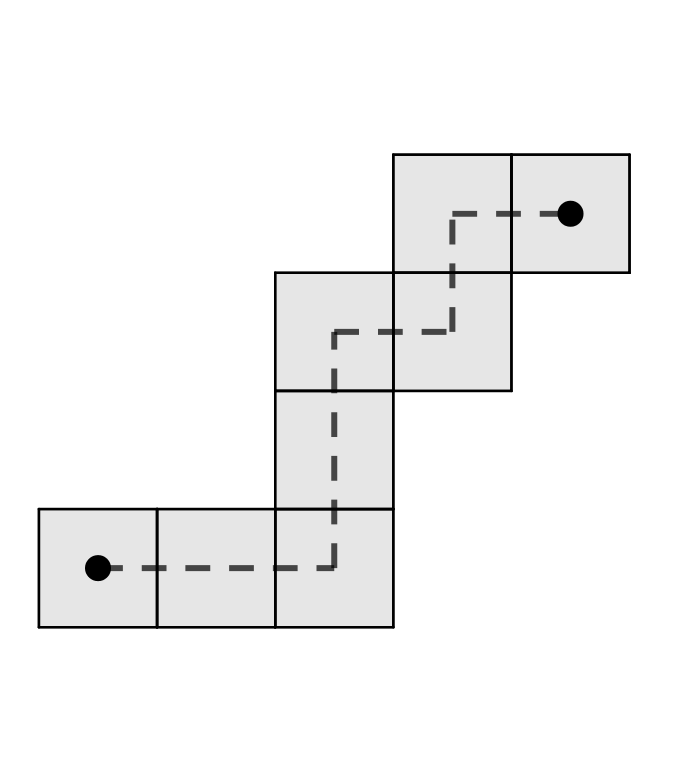}}
    \subfloat[]{\includegraphics[scale=0.6]{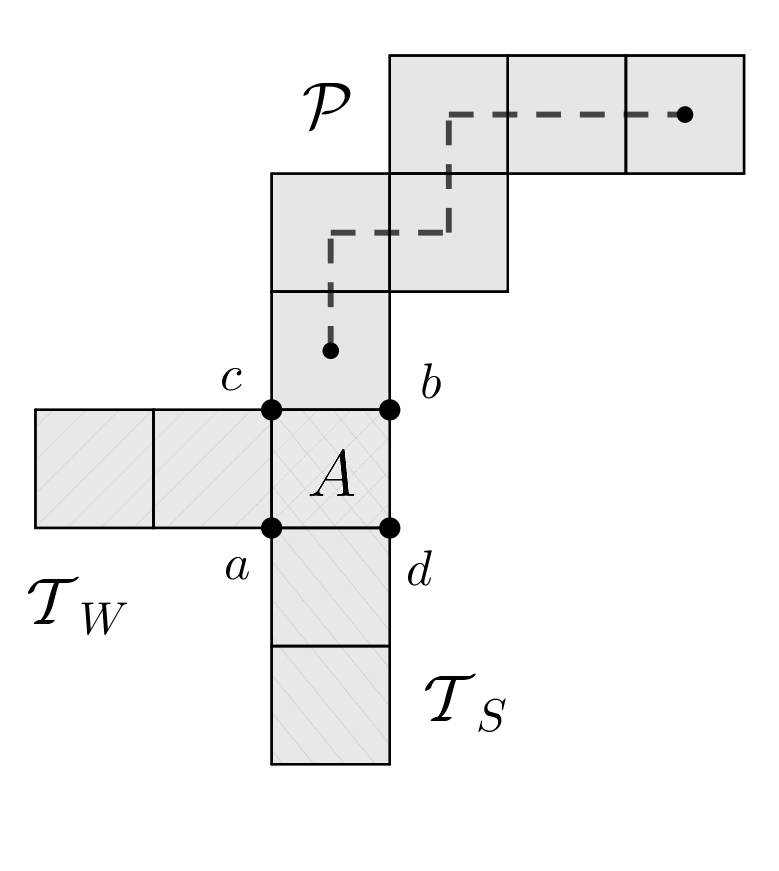}}
    \subfloat[]{\includegraphics[scale=0.6]{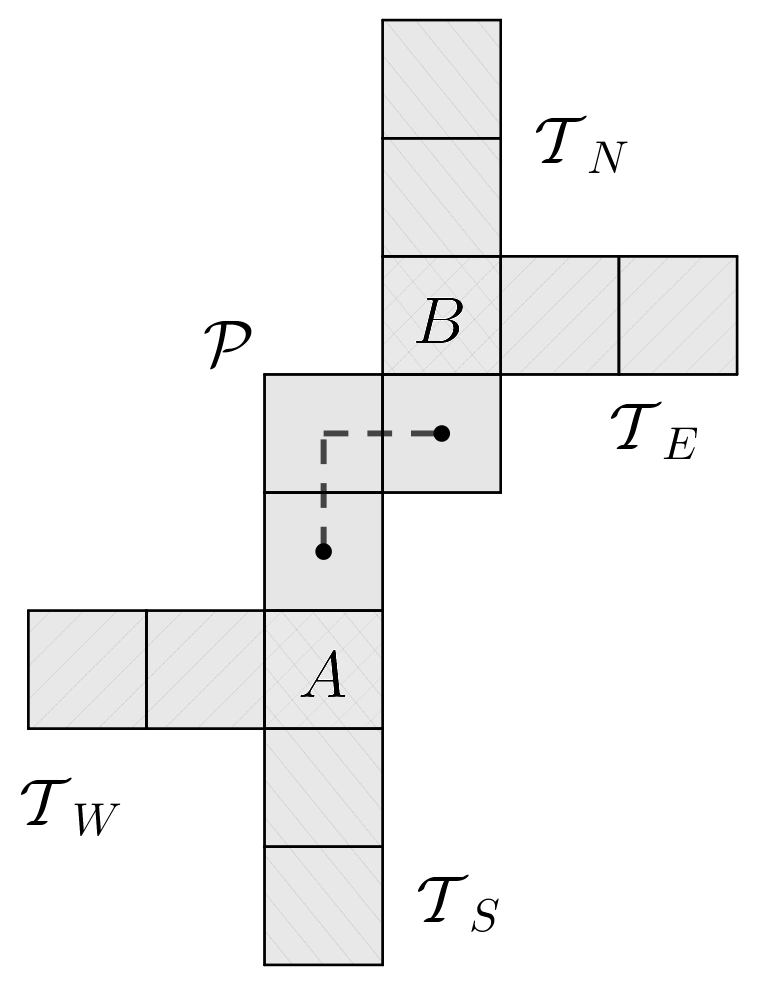}}
    \subfloat[]{\includegraphics[scale=0.5]{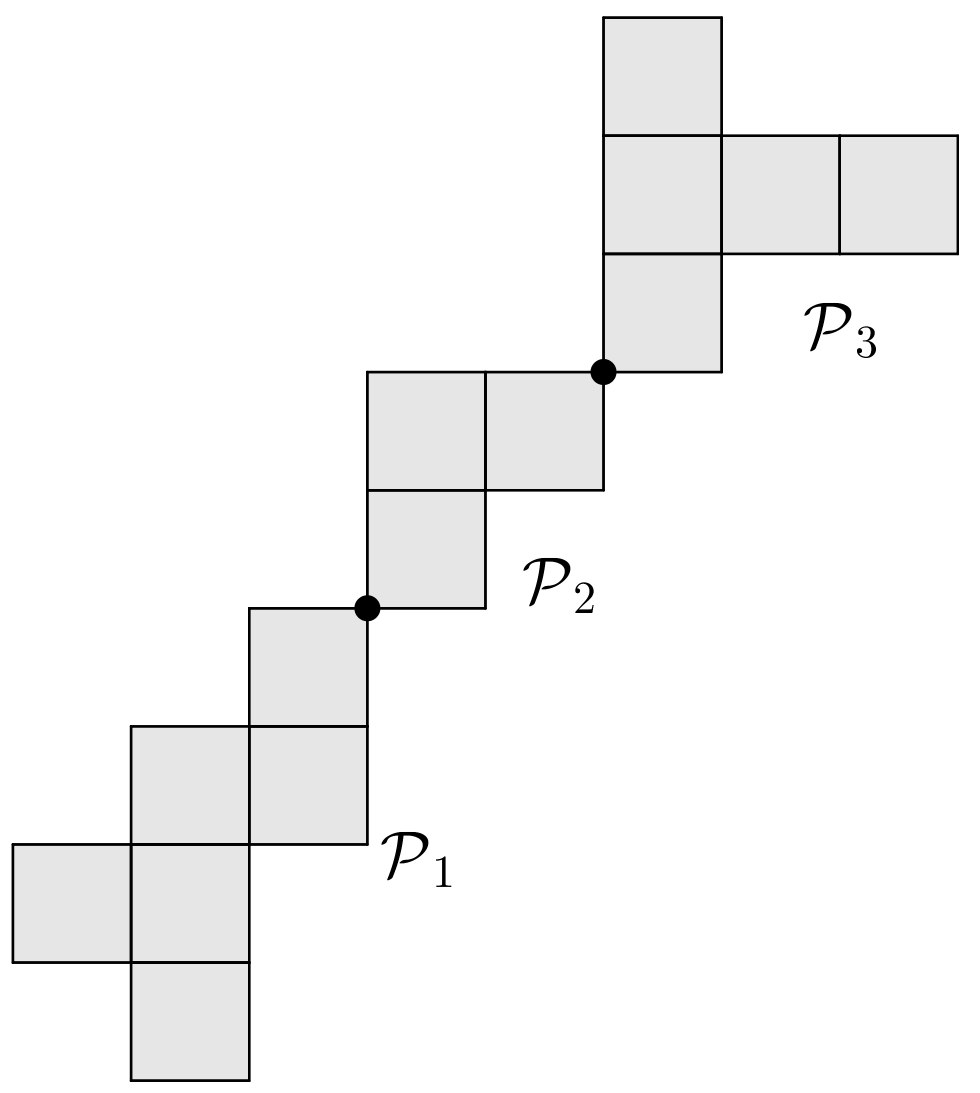}}
    \caption{Possible shapes of a collections of cells containing neither square tetrominoes nor $X$-pentominoes.}
    \label{Figure: convex polyominoes}
\end{figure}
\end{Discussion}

We are now ready to give a combinatorial characterization of all convex collections of cells whose adjacent $2$-minor ideal satisfies any of the equivalent properties listed in
Theorem~\ref{Thm: Unmixed = CI}. Before stating the result, we recall some terminology.
 
 Let $\cC$ be a collection of cells. For any cell $C \in \cC$ with diagonal corners $a$ and $b$, and anti-diagonal corners $c$ and $d$, we refer to $x_a x_b$ as the \emph{diagonal term} and to $x_c x_d$ as the \emph{anti-diagonal term} of $C$. Furthermore, if $<$ denotes a monomial order on $S_{\cC}$, we write $\mathrm{in}_<(f)$ for the initial monomial of a polynomial $f \in S_{\cC}$, and $S(f,g)$ for the $S$-polynomial associated with $f,g \in S_{\cC}$.

\begin{Theorem}\label{Thm: characterization convex CI}
Let $\cC$ be a convex collection of cells. Then $I_{\mathrm{adj}}(\cC)$ satisfies the equivalent conditions in Theorem~\ref{Thm: Unmixed = CI} if and only if $\cC$ contains neither a square tetromino nor an $X$-pentomino.
\end{Theorem}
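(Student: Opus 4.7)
The forward implication is exactly the content of Lemmas~\ref{Coro: necessary condition for CI} and~\ref{Coro: necessary condition for CI with X pento}. For the converse, suppose $\cC$ is a convex collection containing neither a square tetromino nor an $X$-pentomino. The tensor product reduction at the end of Section~\ref{Sec: preliminaries} allows us to assume $\cC$ is weakly connected, and Discussion~\ref{Discussion: convex polyomino} then describes $\cC$ completely: each connected component is a parallelogram path polyomino, optionally carrying West/South/East/North tails attached at a single corner cell, and the distinct connected components of $\cC$ are arranged diagonally from lower-left to upper-right, meeting only at single vertices.

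The plan is to produce a monomial order $<$ on $S_\cC$ for which $\cG(I_{\mathrm{adj}}(\cC))$ is a Gr\"obner basis whose initial monomials are pairwise coprime. Once this is established, Buchberger's first criterion is automatic, so $\cG(I_{\mathrm{adj}}(\cC))$ is a Gr\"obner basis; moreover $\mathrm{in}_<(I_{\mathrm{adj}}(\cC))$ is generated by $|\cC|$ pairwise coprime squarefree quadratic monomials, which form a regular sequence of length $|\cC|$. Since heights pass to initial ideals, $\mathrm{ht}(I_{\mathrm{adj}}(\cC))=|\cC|$, and the ideal is generated by $|\cC|$ elements of that same height, hence is a complete intersection. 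The equivalent properties of Theorem~\ref{Thm: Unmixed = CI} then follow at once.

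The combinatorial core of the argument is to construct an assignment $\tau\colon\cC\to\{\mathrm{diag},\mathrm{anti}\}$, specifying for each cell $C$ whether the diagonal monomial $x_ax_b$ or the anti-diagonal monomial $x_cx_d$ of $f_C$ is its chosen leading term, such that the chosen 2-element vertex sets are pairwise disjoint. Two cells sharing at most an edge impose no constraint, because every edge of a cell joins a diagonal corner to an anti-diagonal corner. Constraints come only from pairs of cells sharing a single common corner $v$: a short case check shows that $v$ is either a diagonal corner of both cells (forcing both to be assigned $\mathrm{anti}$) or an anti-diagonal corner of both (forcing both to be assigned $\mathrm{diag}$). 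By Discussion~\ref{Discussion: convex polyomino}, such corner-only sharings arise in three situations: (i) among non-consecutive cells of a common parallelogram path, where all sharings are of a single type determined by the monotone direction of that path; (ii) between consecutive diagonally arranged components of $\cC$, again of a single type dictated by the overall lower-left to upper-right arrangement; and (iii) at the seams where a tail meets its parallelogram path, where the exclusion of $X$-pentominoes restricts the configuration to a compatible one. Piecing the local choices together yields a global $\tau$, and a weight order realizing $\tau$ as the choice of initial monomials is then obtained by assigning a large weight to one distinguished vertex from each selected pair, breaking any remaining ties lexicographically.

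The main obstacle is verifying the combinatorial assignment in the most intricate configurations of Discussion~\ref{Discussion: convex polyomino}, where a parallelogram path simultaneously carries several tails and/or interfaces with other components at corners. In these situations one must trace the ``diag-of-both'' versus ``anti-of-both'' constraints across all seams and components, and invoke the exclusion of $X$-pentominoes—which forces the parallelogram path to exit the tail-meeting cell $A$ in a unique direction and thereby rules out a would-be $2\times 2$ configuration at the seam—to eliminate the few cases in which conflicting constraints would coexist. Once this case analysis is carried out, the Gr\"obner basis argument delivers the complete intersection conclusion immediately.
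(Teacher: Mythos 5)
Your forward implication and the reduction via Discussion~\ref{Discussion: convex polyomino} match the paper. The converse, however, rests on a false combinatorial claim and therefore cannot cover all the configurations that Discussion~\ref{Discussion: convex polyomino} allows. You assert that ``two cells sharing at most an edge impose no constraint.'' In fact, if $C$ and $D$ share an edge $\{u,w\}$, then $u$ is a diagonal corner of one cell and an anti-diagonal corner of the other (and symmetrically for $w$), so $\mathrm{diag}(C)$ meets $\mathrm{anti}(D)$ and vice versa; the only admissible choices are that edge-adjacent cells receive the \emph{same} label. Hence your assignment $\tau$ is forced to be constant on each connected component, and the corner-sharing constraints inside a component can then be contradictory. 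The smallest failure is the T-tetromino (a cell $A$ with neighbours to the West, South and North): the West and South neighbours share the lower-left corner of $A$ as a common anti-diagonal corner (forbidding all-anti), while the West and North neighbours share the upper-left corner of $A$ as a common diagonal corner (forbidding all-diag), so no $\tau$ exists --- yet this collection is convex, avoids both forbidden shapes, and its ideal is a complete intersection. The same obstruction occurs in all the ``tail'' configurations (Cases I--III of the paper's proof) and in the two-component configuration of Case~A.4 with $n=2$, where the paper explicitly observes that $\mathrm{in}_<(I_{\mathrm{adj}}(\cC))$ is \emph{not} a complete intersection for \emph{any} monomial order. So no Gr\"obner-degeneration argument of the kind you propose can close these cases.

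The paper's proof uses your strategy only where it actually works: for parallelogram paths and for unions of parallelogram-path components whose orientations are compatible (Situation~A and Cases A.1--A.3, and A.4 with $n>2$), it exhibits explicit (possibly component-wise mixed) lexicographic orders with pairwise coprime initial terms. For the remaining configurations it abandons initial ideals entirely and proves unmixedness directly: using the classification of minimal primes by admissible sets (Theorem~\ref{Thm: admissible set}) and the height formula $\mathrm{ht}(P_U(\cC))=|U|+|\cC_U|$ of Remark~\ref{rem:primePW}, it shows by a case analysis on which corners of the tail-meeting cell lie in $U$ that every minimal prime has height $|\cC|$, and then invokes Theorem~\ref{Thm: Unmixed = CI} to get the complete intersection property. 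Your proposal is missing this entire second half of the argument, and it cannot be repaired within the coprime-initial-terms framework.
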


\begin{proof}
    $\Rightarrow)$ The implication follows from Lemmas~\ref{Coro: necessary condition for CI} and~\ref{Coro: necessary condition for CI with X pento}.

\smallskip
\noindent
$\Leftarrow)$ Assume that $\cC$ contains neither a square tetromino nor an $X$-pentomino. We first suppose that $\cC$ is a polyomino and analyze
two situations, depending on its shape as described in
Discussion~\ref{Discussion: convex polyomino}.

\smallskip
\noindent
\textbf{Situation A.} $\cC$ is a parallelogram path.

Let $[a,b]$ be its minimal bounding box. Up to reflection with respect to the $x$-axis, we may assume that $V(\cC)$ contains both $a$ and $b$, that is, $\cC$ has the shape shown in Figure~\ref{Figure: convex polyominoes}~(A).
Consider the lexicographic order induced by $x_{ij} > x_{kl}$ if $j > l$, or $i < k$ when $j = l$. Then the leading monomial of each generator of $I_{\mathrm{adj}}(\cC)$ is the corresponding anti-diagonal term.

Due to the specific shape of $\cC$, we have $\gcd(\mathrm{in}_<(f), \mathrm{in}_<(g)) = 1$ for all $f,g \in \cG(I_{\mathrm{adj}}(\cC))$. Hence, each $S(f,g)$ reduces to zero modulo $\cG(I_{\mathrm{adj}}(\cC))$, and therefore $\mathrm{in}_<(I_{\mathrm{adj}}(\cC))$is a complete intersection. Consequently, $I_{\mathrm{adj}}(\cC)$ itself is a complete intersection. Consequently, $I_{\mathrm{adj}}(\cC)$ itself is a complete intersection,
and thus satisfies the equivalent conditions in
Theorem~\ref{Thm: Unmixed = CI}.
In particular, in this case $I_{\mathrm{adj}}(\cC)$ is unmixed, a fact that will be used in the following case.

\smallskip
\noindent
\textbf{Situation B.} $\cC$ is not a parallelogram path.

According to Discussion~\ref{Discussion: convex polyomino}, there are three possible cases, which we analyze below.

\medskip
\noindent
\textbf{Case I.} Assume that $\cC$ is the union of a parallelogram polyomino $\cP$ and the tails $\cT_S$ and $\cT_W$ (see, for instance, Figure~\ref{Figure: convex polyominoes}~(B)). Let $A$ be the cell in $\cT_S \cap \cT_W$, and denote by $a$ its lower-left corner. Set $b = a + (1,1)$, $c = a + (0,1)$, and $d = a + (1,0)$. 
  
  We show that $I_{\mathrm{adj}}(\cC)$ is unmixed. By Proposition~\ref{Prop: LC is a minial prime}, the lattice ideal $L_{\cC}$ is a minimal prime of $I_{\mathrm{adj}}(\cC)$ of height $\vert \cC \vert$. Let $\fp$ be a minimal prime of $I_{\mathrm{adj}}(\cC)$ with $\fp\neq L_{\cC}$. By Theorem~\ref{Thm: admissible set}, there exists a non-empty admissible set $U$ of $\cC$ such that $\fp = P_U(\cC)$, and by Remark~\ref{rem:primePW}, we obtain $\mathrm{ht}(\fp) = \vert U \vert + \vert \cC_U \vert$. To show that $I_{\mathrm{adj}}(\cC)$ is unmixed, it suffices to show that $\vert U \vert + \vert \cC_U \vert = \vert \cC \vert$.

\medskip
\noindent
\textit{Case I.1.}
Assume that $U\cap \{a,d\}=\emptyset$. Set $\cP' = \cT_W \cup \cP$ and $\cP'' = \cT_S \setminus \{A\}$, and define
$$U_1 = U \cap V(\cP') \quad \text{and} \quad U_2 = U \cap V(\cP'').$$
Then $U_1$ and $U_2$ are admissible sets of $\cP'$ and $\cP''$, respectively. Since $\cP'$ is a parallelogram path, Situation~\textup{A} implies that $I_{\mathrm{adj}}(\cP')$ is unmixed. We claim that $P_{U_1}(\cP')$ is a minimal prime of $I_{\mathrm{adj}}(\cP')$. Suppose, to the contrary, that it is not. Then there exists a minimal prime ideal
$\mathfrak{p}'$ such that $I_{\mathrm{adj}}(\cP') \subset \fp' \subset P_{U_1}(\cP').$ By Theorem~\ref{Thm: admissible set}, we have $\fp' = P_{U'}(\cP')$ for some admissible set $U'$ of $\cP'$. Then $U' \subset U_1$. Now, $\tilde{U} := U' \cup U_2$ is an admissible set of $\cC$, and we have $I_{\mathrm{adj}}(\cC) \subset P_{\tilde{U}}(\cC) \subset \fp=P_U(\cC),$
which contradicts the minimality of $\mathfrak{p}$.  Therefore, $P_{U_1}(\cP')$ is indeed a minimal prime of $I_{\mathrm{adj}}(\cP')$, and by Theorem~\ref{Thm: Unmixed = CI} and Remark~\ref{rem:primePW} we obtain
\begin{equation}\label{eq1}
    \vert \cP'_{U_1} \vert + \vert U_1 \vert = \vert \cP' \vert.
\end{equation}
Since $\cP''$ is a parallelogram path, same argument applies to $U_2$ and $\cP''$, yielding 
\begin{equation}\label{eq2}
\vert \cP''_{U_2} \vert + \vert U_2 \vert = \vert \cP'' \vert.  
\end{equation}
Adding \eqref{eq1} and \eqref{eq2}, and observing that $\vert \cP'_{U_1} \vert + \vert \cP''_{U_2} \vert=\vert \cC_U \vert$, we obtain  
$\vert \cC_U \vert + \vert U \vert = \vert \cC \vert$,  
as required.  An analogous argument applies if $U\cap \{a,c\}=\emptyset$. 

\medskip
\noindent
\textit{Case I.2.}  Assume that $U\cap \{a,d\}=\{d\}$. If $c\notin U$, we fall into the previous case where $U\cap \{a,c\}=\emptyset$. Assume therefore that $c \in U$; we show that this leads to a contradiction. Since $U$ is an admissible set of $\cC$ and $c,d\in U$ while $a\notin U$, admissibility forces $b\in U$. For the same reason, the vertices $s$ and $w$ in Figure \ref{fig: prrof of characterization convex first} belong to $U$. We distinguish two cases according to whether $A'$ lies to the North or to the East
of $A$.
\begin{itemize}
    \item[\textup{(i)}]
    \textit{$A'$ lies to the North of $A$}
    (see Figure~\ref{fig: prrof of characterization convex first}~(A)).\\
    Set $U' = U \cap \left(V(\cP \cup \cT_W) \setminus \{d\}\right)$.  Then $P_{U'}(\mathcal{P}\cup \mathcal{T}_W)+L_{\mathcal{T}_S\setminus\{A\}}$
    is a prime ideal and
    \[
    I_{\mathrm{adj}}(\mathcal{C})
    \subset
    P_{U'}(\mathcal{P}\cup \mathcal{T}_W)+L_{\mathcal{T}_S\setminus\{A\}}
    \subsetneq
    P_U(\mathcal{C}),
    \]
    which contradicts the minimality of $P_U(\mathcal{C})$.

     \item[\textup{(ii)}]
    \textit{$A'$ lies to the East of $A$}
    (see Figure~\ref{fig: prrof of characterization convex first}~(B)).\\
    Set $U'' = U \cap \bigl(V(\mathcal{P}\cup \mathcal{T}_S)\setminus\{c\}\bigr).$ Then $P_{U''}(\mathcal{P}\cup \mathcal{T}_S)+L_{\mathcal{T}_W\setminus\{A\}}$
    is a prime ideal and
    \[
    I_{\mathrm{adj}}(\mathcal{C})
    \subset
    P_{U''}(\mathcal{P}\cup \mathcal{T}_S)+L_{\mathcal{T}_W\setminus\{A\}}
    \subsetneq
    P_U(\mathcal{C}),
    \]
    again contradicting the minimality of $P_U(\mathcal{C})$.
\end{itemize}

\begin{figure}[h]
   \centering
   \subfloat[]{\includegraphics[scale=0.8]{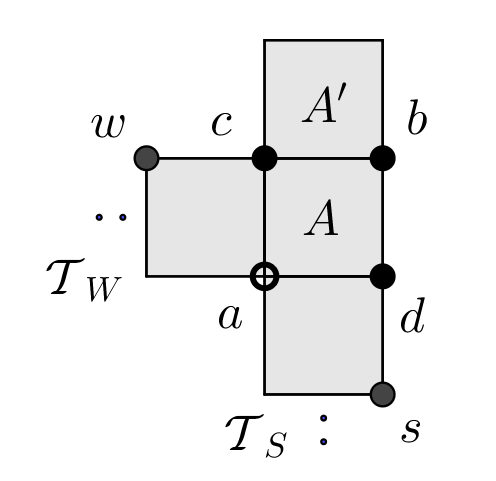}}\qquad
   \subfloat[]{\includegraphics[scale=0.8]{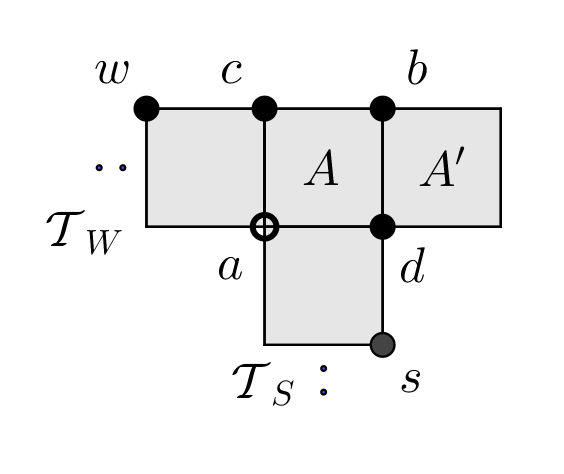}}
   \caption{Intersection of $\cT_S$ and $\cT_W$.}
   \label{fig: prrof of characterization convex first}
\end{figure}

\medskip
\noindent
\textit{Case I.3.} Suppose that $U \cap \{a,d\} = \{a,d\}$. In this case, we distinguish four
subcases to prove the equality  $\vert \cC_U \vert + \vert U \vert = \vert \cC \vert$.  
Let $A_S$ and $A_W$ denote the cells of $\cC$ lying, respectively, to the South and to the West of $A$, and let $A'$ be the cell of $\cC$ that shares either $\{c,b\}$ or $\{b,d\}$ with $A$; see Figure~\ref{fig:prrof of characterization convex}.

\begin{itemize}
\item[\textup{(i)}]
\textit{$c\in U$ and $b\notin U$}
(see Figure~\ref{fig:prrof of characterization convex}).

\noindent Set $\cC(W):=\cT_W \setminus \{A, A_W\}$ and $\cC(S):=\cT_S \setminus \{A, A_S\}$. 
Observe that 
\[
U_W := U \cap V(\cC(W)), \quad 
U_S := U \cap V(\cC(S)) \quad\text{ and }\quad 
U' := U \cap V(\cP)
\]
are admissible sets of $\cC(W)$, $\cC(S)$, and $\cP$, respectively. 
Therefore, the ideals $P_{U_W}(\cC(W))$, $P_{U_S}(\cC(S))$, and $P_{U'}(\cP)$ are well-defined.
Since $\cC(W)$, $\cC(S)$, and $\cP$ are parallelogram paths, the corresponding adjacent $2$-minor ideals are unmixed as discussed in Situation~A. Hence, arguing as in \textit{Case~I.1}, we obtain the following:
\begin{equation}\label{eq3}
    \vert \cC(W)_{U_W} \vert + \vert U_W \vert = \vert \cC(W) \vert,\qquad 
    \vert \cC(S)_{U_S} \vert + \vert U_S \vert = \vert \cC(S) \vert,\qquad 
    \vert \cP_{U'} \vert + \vert U' \vert = \vert \cP \vert.
\end{equation}

\noindent Since $\cC = \cC(W) \sqcup \cC(S) \sqcup  \cP \sqcup \{A, A_W, A_S\}$, we have
 \begin{equation}\label{eq4}
     \vert\cC\vert = \vert \cC(W) \vert+ \vert \cC(S) \vert+\vert \cP \vert +3.
 \end{equation}
Combining \eqref{eq3} and \eqref{eq4}, we obtain
 $$\vert \cC\vert=(\vert \cC(W)_{U_W} \vert+\vert \cC(S)_{U_S} \vert +\vert \cC_{U'} \vert )+ (\vert U_W \vert + \vert U_S \vert + \vert U' \vert+3).$$
 At this point, one can easily observe that $U_W \sqcup  U_S \sqcup U' \sqcup\{a,c,d\}= U$ and 
     \[
 \cC(W)_{U_W} \sqcup \cC(S)_{U_S} \sqcup  \cC_{U'} = (\cC\setminus\{A,A_S,A_W\})_{U_W\sqcup U_S\sqcup U'}=\cC_{U_W\sqcup U_S\sqcup U'\sqcup \{a,c,d\}}=\cC_U.
 \]
 Hence, we conclude that $|\cC|=|\cC_U|+|U|$, which proves the claim.

\begin{figure}[h]
   \centering
   \subfloat{\includegraphics[scale=0.8]{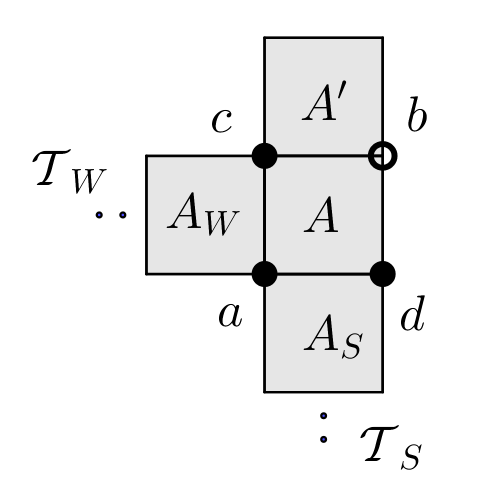}}\qquad
   \subfloat{\includegraphics[scale=0.8]{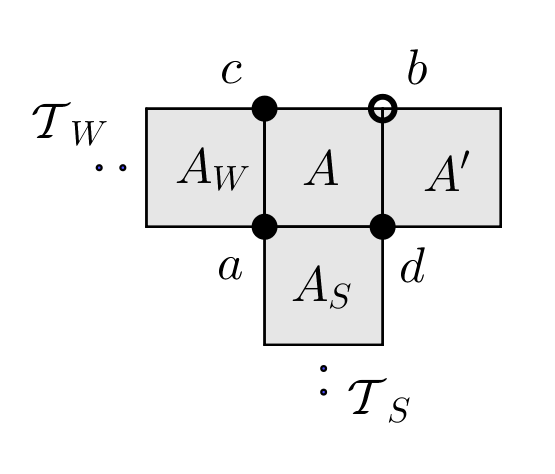}}
   \caption{$c\in U$ and $b\notin U$.}
   \label{fig:prrof of characterization convex}
\end{figure}
\item[\textup{(ii)}]
\textit{$c\notin U$ and $b\in U$.}\\
This case follows from Subcase~\textup{(i)} by interchanging the roles of $b$
and $c$.

\item[\textup{(iii)}]
\textit{$c\in U$ and $b\in U$.}\\
This case is analogous to Subcase~\textup{(i)},  with the roles of $\cP$ and $\{a,b,c\}$ now played by $\cP \setminus \{A'\}$ and $\{a,b,c,d\}$, respectively, and noting that 
$\cC_{\{a,b,c,d\}} = \cC \setminus \{A, A_W, A_S, A'\}$.

\item[\textup{(iv)}]
\textit{$c\notin U$ and $b\notin U$.}\\
In this case, $U \cap \{b,c\} = \emptyset$, and we argue as in \textit{Case~I.1}, and working with $\cT_W\cup \cT_S$ and $\cP\setminus\{A'\}$, which are both parallelogram paths, so the corresponding adjacent $2$-minor ideals are unmixed by Situation~A.
\end{itemize}

\medskip
\noindent
\textit{Case I.4.}
Assume that $U \cap \{a,d\} = \{a\}$.
Since $U$ is admissible, it follows that $c \in U$, hence
$U \cap \{a,c\} = \{a,c\}$.
The conclusion then follows by an argument analogous to \textit{Case~I.3}.

\medskip
\noindent
This completes the proof for the case where $\cC$ is the union of a parallelogram
polyomino $\cP$ and the tails $\cT_S$ and $\cT_W$, and hence
$I_{\mathrm{adj}}(\cC)$ is unmixed.

\medskip
\noindent
\textbf{Case II.}
The case where $\mathcal{C}$ consists of the East-tail and the North-tail
together with a parallelogram polyomino is analogous to Case~I, up to symmetry
(reflection/rotation).

\medskip
\noindent
\textbf{Case III.}
More generally, assume that $\cC$ consists of all four tails together
with a parallelogram polyomino $\cP$
(see, for instance, Figure~\ref{Figure: convex polyominoes}~(C)). Let $A$ and $B$ be the cells in $\cT_S \cap \cT_W$ and $\cT_N \cap \cT_E$, respectively. Denote by $a$ the lower-left corner of $A$, and set 
$b = a + (1,1)$, $c = a + (0,1)$, and $d = a + (1,0)$.  Similarly, let $a'$ be the lower-left corner of $B$, and define $b' = a' + (1,1)$, $c' = a' + (0,1)$, and $d' = a' + (1,0)$.

\begin{itemize}
    \item If $\cP \neq \emptyset$, then we may regard $\cC$ as consisting of the two tails $\cT_W$ and $\cT_S$, together with the polyomino $\cP' := \cP \cup \cT_N \cup \cT_E.$ By Case~I, we know that $I_{\mathrm{adj}}(\cP')$ is unmixed. The claim follows by arguing exactly as in Case~I, with the role of $\cP$ in that case now played by $\cP'$.
    
\item If $\mathcal{P} = \emptyset$, then $A$ and $B$ are adjacent. Up to rotation or reflection of $\mathcal{C}$, we may assume that $B$ lies to the North of $A$, so that $A \cap B$ is the edge $\{b,c\}$.
In this situation, the proof proceeds exactly as in Case~I: one examines whether
$b$ or $c$ belongs to $U$, and then considers the corresponding subcases
determined by the membership of $a,d,c',d'$ in $U$.
To avoid lengthening the proof further, we omit this straightforward verification;
see also the folder \texttt{Examples\_CaseIII} in~\cite{N} for explicit examples.
\end{itemize}

\medskip
\noindent
This completes the proof in the case where \(\cC\) is a polyomino.

To complete the argument, we now consider the case in which $\cC$ is a collection of cells that is not a polyomino. According to the last paragraph of Discussion~\ref{Discussion: convex polyomino}, we may assume that $\cC$ consists of connected components arranged from the lower-left to the upper-right, starting with a polyomino $\cP_1$ formed by a parallelogram path with at most West and South tails, followed by a finite number $n-2$ of parallelogram paths
$\cP_2, \dots, \cP_{n-1}$, and ending with a polyomino $\cP_n$ consisting of a parallelogram path with at most East and North tails (see Figure~\ref{Figure: convex polyominoes}~(D) for an example). 

\medskip
\noindent
\textbf{Case A.} Assume that all connected components are parallelogram paths.

Let $[a_1,b_1]$ and $[a_n,b_n]$ be the minimal bounding boxes of $\cP_1$ and $\cP_n$, respectively, and denote by $c_1,d_1$ and $c_n,d_n$ their anti-diagonal corners.  

\medskip
\noindent
\textit{Case A.1.} If $V(\cP_1)$ and $V(\cP_n)$ contain $a_1,b_1$ and $a_n,b_n$, respectively, then $\mathrm{in}_<(I_{\mathrm{adj}}(\cC))$ is a complete intersection with respect to the same monomial order $<$ introduced in Situation~A. Consequently, $I_{\mathrm{adj}}(\cC)$ itself is a complete intersection, as required. 

\begin{figure}[h]
    \centering
    \subfloat[]{\includegraphics[scale=0.7]{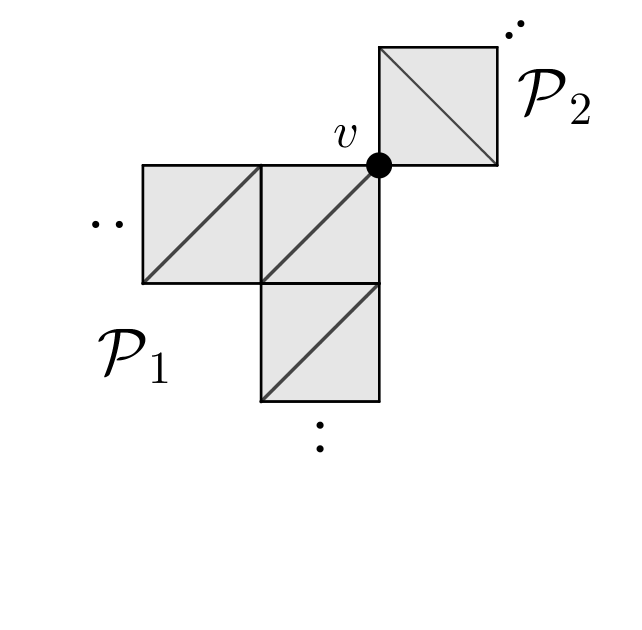}}\qquad
    \subfloat[]{\includegraphics[scale=0.7]{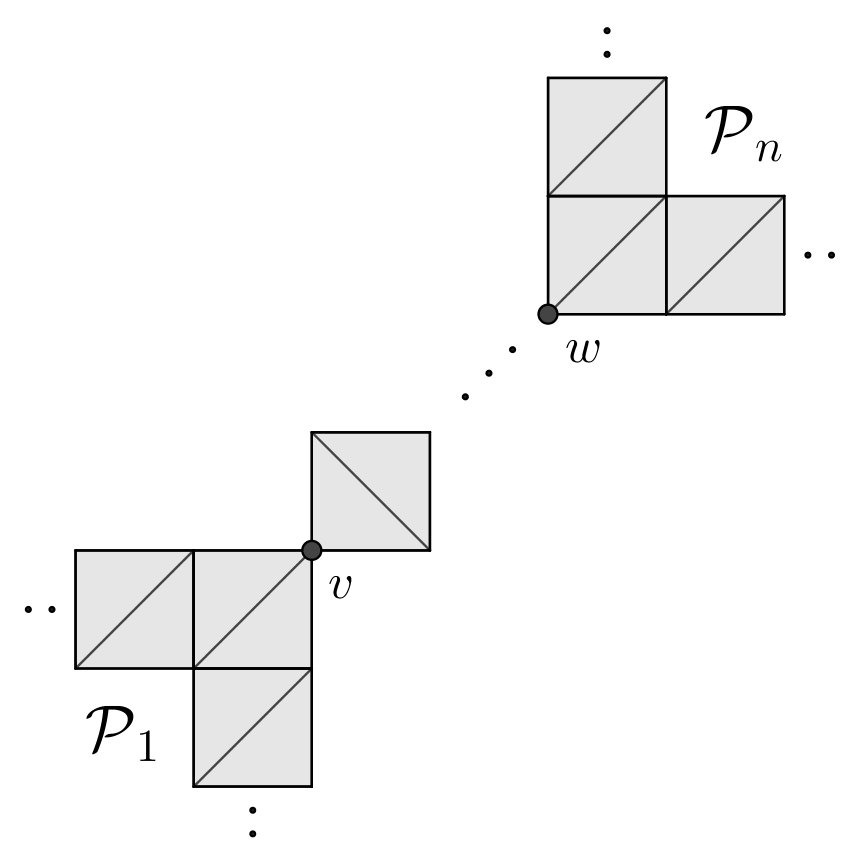}}\qquad
        \subfloat[]{\includegraphics[scale=0.7]{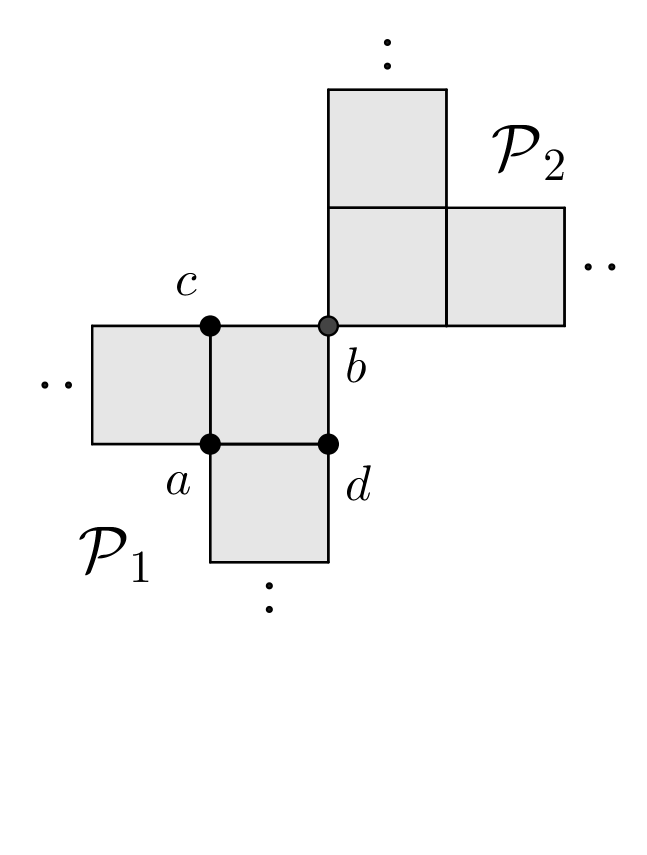}}
    \caption{Possible shapes of $\cC$.}
    \label{Figure: possible orientation P1}
\end{figure}

\medskip
\noindent
\textit{Case A.2.} If $V(\cP_1)$ contains $c_1,d_1$ (see Figure~\ref{Figure: possible orientation P1} (A)) and $V(\cP_n)$ contains $a_n,b_n$, then we set $V(\cP_1)\cap V(\cP_2)=\{v\}$, and define the following total orders on $\ZZ^2$:
\begin{itemize}
    \item $(k,l)<^1(i,j)$ if $k < i$, or $l < j$ when $i = k$.
    \item $(k,l)<^2(i,j)$ if $i < k$, or $l < j$ when $i = k$.
\end{itemize}
Consider the lexicographic order $<$ induced by the following ordering of the variables:
\[
x_p < x_q \iff 
\begin{cases}
    &p \in V(\cP_1),\ q \in V(\cC\setminus \cP_1)\setminus\{v\}, \\
    &p,q \in V(\cP_1) \text{ and } p <^1 q,  \\
    &p,q \in V(\cC\setminus\cP_1)\setminus\{v\} \text{ and } p <^2 q.
\end{cases}
\]
With respect to this order, the initial terms of the generators of $I_{\mathrm{adj}}(\cP_1)$ are diagonal, while those of $I_{\mathrm{adj}}(\cC\setminus \cP_1)$ are anti-diagonal. Consequently, $\mathrm{in}_<(I_{\mathrm{adj}}(\cC))$ is a complete intersection, and hence, $I_{\mathrm{adj}}(\cC)$ itself is a complete intersection, as required.

\medskip
\noindent
\textit{Case A.3.}
If $V(\cP_1)$ contains $a_1,b_1$ and $V(\cP_n)$ contains $c_n,d_n$, then, after reflecting $\cC$ and relabeling the connected components, we reduce to the situation considered in Case~A.2.

\medskip
\noindent
\textit{Case A.4.} If $V(\cP_1)$ contains $c_1,d_1$ and $V(\cP_n)$ contains $c_n,d_n$, then we need to distinguish two sub-cases according to the number of connected components of $\cC$.

\begin{itemize}
    \item If $n>2$ (see Figure~\ref{Figure: possible orientation P1} (B)), set $V(\cP_1)\cap V(\cP_2)=\{v\}$ and $V(\cP_{n-1})\cap V(\cP_n)=\{w\}$, and define $<^1$ and $<^2$ as in Case~A.2. Consider the lexicographic order $<$ induced by the following ordering of the variables, for $p,q\in V(\cC)$:
\[
x_p < x_q \iff 
\begin{cases}
    &p \in V(\cP_1),\ q \in V(\cC\setminus \cP_1)\setminus\{v\}, \\
    &p \in V(\cC\setminus\cP_n),\ q \in V(\cP_n)\setminus\{w\}, \\
    &p,q \in V(\cP_1) \text{ and } p <^1 q,  \\
    &p,q \in V(\cC\setminus(\cP_1\cup \cP_n))\setminus\{v\} \text{ and } p <^2 q,\\
    &p,q \in V(\cP_n)\setminus\{w\} \text{ and } p <^1 q.
\end{cases}
\]
With respect to this order, the initial terms of the generators of $I_{\mathrm{adj}}(\cP_1)$ and $I_{\mathrm{adj}}(\cP_n)$ are diagonal, whereas those of $I_{\mathrm{adj}}(\cC\setminus (\cP_1\cup \cP_n)$ are anti-diagonal. Hence, $\mathrm{in}_<(I_{\mathrm{adj}}(\cC))$ is a complete intersection. Consequently, $I_{\mathrm{adj}}(\cC)$ itself is a complete intersection, as required.

\item
If $n=2$ (see Figure~\ref{Figure: possible orientation P1}~(C)), it is easy to see that $\mathrm{in}_{<}(I_{\mathrm{adj}}(\cC))$ is not a complete intersection for any monomial order $<$. In this case, let $V$ and $H$ denote the vertical and horizontal maximal intervals forming $\cP_1$, and let $V\cap H=\{A\}$. If $\fp$ is a minimal prime of $I_{\mathrm{adj}}(\cC)$, we must show that $\mathrm{ht}(\fp) = \vert \cC \vert$, that is, $\vert U \vert + \vert \cC_U \vert = \vert \cC \vert$ for some admissible set $U$ of $\cC$. This follows by arguing as in Case~I on the vertices of $A$, recalling that,
as shown earlier, the adjacent $2$-minor ideals of collections of cells contained in $H\cup \cP_2$ or $V\cup \cP_2$ are unmixed.
\end{itemize}

\medskip
\noindent
\textbf{Case B.}
Assume that not all connected components of $\cC$ are parallelogram paths.

We distinguish the following subcases, depending on the shapes of the first and last connected components.

\medskip
\noindent
\textit{Case B.1.} 
Suppose that $\cP_1$ consists of a parallelogram path with only West and South tails, while $\cP_n$ is a parallelogram path.
In this case, the argument is identical to that of Case~I, with the role of Situation~(A) in that case (namely, when $\cC$ itself is a parallelogram path) now played by Case~A.

\medskip
\noindent
\textit{Case B.2.} Suppose that $\cP_1$ is a parallelogram path and that $\cP_n$ consists of a parallelogram path with only North and East tails. By applying suitable rotations and reflections to $\cC$, this situation reduces to Case~B.1.

\medskip
\noindent
\textit{Case B.3.} If neither $\cP_1$ nor $\cP_n$ is a parallelogram path, then the argument proceeds exactly as in Case~III.

\medskip
\noindent
This completes the proof.
\end{proof}

\begin{Remark}\rm
In the folder \texttt{Example\_minimal\_prime\_convex} in \cite{N}, the reader can find several examples illustrating the admissible sets associated with the minimal primes of convex polyominoes that contain neither a square tetromino nor an $X$-pentomino.
\end{Remark}

\begin{Remark}\rm\label{Rmk: counter-example convex}
We provide a counterexample in the non-convex case by exhibiting a simple non-convex polyomino that contains neither a square tetromino nor an $X$-pentomino, but whose adjacent $2$-minor ideal is not unmixed. Consider the polyomino $\cP$ shown in Figure~\ref{fig: non-convex}.

\begin{figure}[h]
   \centering
   \includegraphics[scale=1]{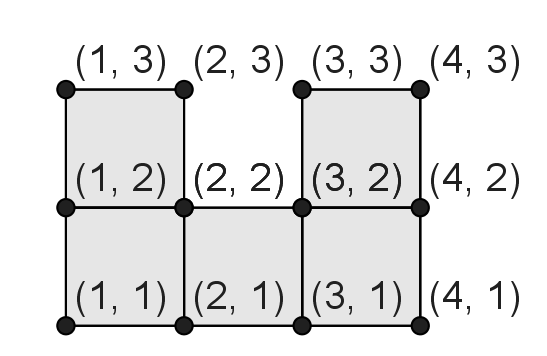}
   \caption{A non-convex polyomino.}
   \label{fig: non-convex}
\end{figure}

\noindent Using \textit{Macaulay2} (\cite{CNJ, M2, Binomials}), we can efficiently compute all minimal prime ideals of the adjacent $2$-minor ideal of $\cP$, which are listed below:
\[
\begin{aligned}
& \fp_1 = I_{\cP} = (x_{2,2}x_{1,1}-x_{2,1}x_{1,2},\,x_{2,3}x_{1,1}-x_{2,1}x_{1,3},\,x_{2,3}x_{1,2}-x_{2,2}x_{1,3},\,x_{3,2}x_{1,1}-x_{3,1}x_{1,2},\,x_{3,2}x_{2,1}-\\
&x_{3,1}x_{2,2},\, x_{4,2}x_{1,1}-x_{4,1}x_{1,2},\,x_{4,2}x_{2,1}-x_{4,1}x_{2,2},\,x_{4,2}x_{3,1}-x_{4,1}x_{3,2},\,x_{4,3}x_{3,1}-x_{4,1}x_{3,3},\,x_{4,3}x_{3,2}-x_{4,2}x_{3,3}),\\
& \fp_2 = (x_{4,2}x_{1,1}-x_{4,1}x_{1,2},\,x_{4,2}x_{2,1}-x_{4,1}x_{2,2},\,x_{4,2}x_{3,1}-x_{4,1}x_{3,2},\,x_{4,3}x_{3,1}-x_{4,1}x_{3,3},\,x_{4,3}x_{3,2}-x_{4,2}x_{3,3}),\\
& \fp_3 = (x_{1,2},\,x_{2,1},\,x_{2,2},\,x_{4,2}x_{3,1}-x_{4,1}x_{3,2},\,x_{4,3}x_{3,1}-x_{4,1}x_{3,3},\,x_{4,3}x_{3,2}-x_{4,2}x_{3,3}),\\
& \fp_4 = (x_{2,1},\,x_{2,2},\,x_{2,3},\,x_{4,2}x_{3,1}-x_{4,1}x_{3,2},\,x_{4,3}x_{3,1}-x_{4,1}x_{3,3},\,x_{4,3}x_{3,2}-x_{4,2}x_{3,3}),\\
& \fp_5 = (x_{3,1},\,x_{3,2},\,x_{3,3},\,x_{2,2}x_{1,1}-x_{2,1}x_{1,2},\,x_{2,3}x_{1,1}-x_{2,1}x_{1,3},\,x_{2,3}x_{1,2}-x_{2,2}x_{1,3}),\\
& \fp_6 = (x_{1,2},\,x_{2,2},\,x_{3,1},\,x_{3,2},\,x_{3,3}),\\
& \fp_7 = (x_{1,2},\,x_{2,2},\,x_{3,2},\,x_{4,2}),\\
& \fp_8 = (x_{2,1},\,x_{2,2},\,x_{2,3},\,x_{3,2},\,x_{4,2}),\\
& \fp_9 = (x_{3,1},\,x_{3,2},\,x_{4,2},\,x_{2,2}x_{1,1}-x_{2,1}x_{1,2},\,x_{2,3}x_{1,1}-x_{2,1}x_{1,3},\,x_{2,3}x_{1,2}-x_{2,2}x_{1,3}).
\end{aligned}
\]

\noindent We observe that $\mathrm{ht}(\fp_6)=5$ and $\mathrm{ht}(\fp_7)=4$, hence the ideal $I_{\mathrm{adj}}(\cC)$ is not unmixed, and therefore it does not satisfy any of the equivalent conditions in Theorem~\ref{Thm: Unmixed = CI}.
\end{Remark}

\section{On the radicality of adjacent $2$-minor ideals} \label{Sec: Radicality}

In this section, we aim to provide necessary conditions for the radicality of adjacent $2$-minor ideals. We begin with the following consequence of Theorem~\ref{Thm: admissible set}, together with the fact that the radical of an ideal is the intersection of its minimal prime ideals.

    \begin{Corollary}\label{Coro: primary decomposition}
    Let $\cC$ be a collection of cells. Then
    \[
    \sqrt{I_{\mathrm{adj}}(\mathcal{C})} = \bigcap_{W \subseteq V(\mathcal{C})} P_W(\mathcal{C}),
    \]
    where the intersection is taken over all admissible sets $W \subseteq V(\mathcal{C})$.
    \end{Corollary}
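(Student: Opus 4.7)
The plan is to derive this directly from Theorem~\ref{Thm: admissible set} together with the standard fact that the radical of an ideal coincides with the intersection of its minimal primes. More precisely, I would proceed in three steps.

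First, I would invoke the classical identity
\[
\sqrt{I_{\mathrm{adj}}(\mathcal{C})} \;=\; \bigcap_{\mathfrak{p} \in \mathrm{Min}(I_{\mathrm{adj}}(\mathcal{C}))} \mathfrak{p}.
\]
By Theorem~\ref{Thm: admissible set}, every minimal prime $\mathfrak{p}$ of $I_{\mathrm{adj}}(\mathcal{C})$ is of the form $P_W(\mathcal{C})$ for some admissible set $W \subseteq V(\mathcal{C})$. Letting $\mathcal{W}_{\min}$ denote the collection of admissible sets whose associated prime $P_W(\mathcal{C})$ is a minimal prime of $I_{\mathrm{adj}}(\mathcal{C})$, this rewrites as
\[
\sqrt{I_{\mathrm{adj}}(\mathcal{C})} \;=\; \bigcap_{W \in \mathcal{W}_{\min}} P_W(\mathcal{C}).
\]

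Second, I would show that enlarging the index set from $\mathcal{W}_{\min}$ to the collection of all admissible subsets of $V(\mathcal{C})$ does not change the intersection. To this end, let $W$ be an admissible set with $W \notin \mathcal{W}_{\min}$. By Remark~\ref{rem:primePW}, $P_W(\mathcal{C})$ is a prime ideal containing $I_{\mathrm{adj}}(\mathcal{C})$, so it contains some minimal prime of $I_{\mathrm{adj}}(\mathcal{C})$; by Theorem~\ref{Thm: admissible set} this minimal prime has the form $P_{W'}(\mathcal{C})$ for some $W' \in \mathcal{W}_{\min}$. Hence $P_{W'}(\mathcal{C}) \subseteq P_W(\mathcal{C})$, which implies $P_{W'}(\mathcal{C}) \cap P_W(\mathcal{C}) = P_{W'}(\mathcal{C})$, so including $P_W(\mathcal{C})$ in the intersection leaves it unchanged.

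Finally, combining the two steps yields
\[
\sqrt{I_{\mathrm{adj}}(\mathcal{C})} \;=\; \bigcap_{W \in \mathcal{W}_{\min}} P_W(\mathcal{C}) \;=\; \bigcap_{W \subseteq V(\mathcal{C})} P_W(\mathcal{C}),
\]
where the last intersection ranges over all admissible sets $W$. There is no real obstacle here; the only subtlety is noticing that extending the intersection to non-minimal admissible sets is harmless because each non-minimal $P_W(\mathcal{C})$ is absorbed by a minimal one it contains, which is guaranteed precisely by Theorem~\ref{Thm: admissible set}.
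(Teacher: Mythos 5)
Your proposal is correct and takes essentially the same route as the paper, which presents the corollary as an immediate consequence of Theorem~\ref{Thm: admissible set} combined with the fact that the radical equals the intersection of the minimal primes. Your explicit absorption argument for the non-minimal admissible sets (each such $P_W(\mathcal{C})$ contains a minimal prime, which by Theorem~\ref{Thm: admissible set} and Remark~\ref{rem:primePW} is again of the form $P_{W'}(\mathcal{C})$) simply spells out the detail the paper leaves implicit.
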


The above result yields a primary decomposition of $ \sqrt{I_{\mathrm{adj}}(\mathcal{C})}$, although it is not necessarily minimal. Indeed, not every admissible set of $\mathcal{C}$ corresponds to a minimal prime of $I_{\mathrm{adj}}(\mathcal{C})$, as illustrated in Remark~\ref{ex:notmin}.

Given a collection of cells $\mathcal{C}$, our strategy is to identify subcollections of $\mathcal{C}$ whose presence guarantees the non-radicality of $I_{\mathrm{adj}}(\mathcal{C})$.

     \begin{Definition}\rm\label{Defn:minimally non-radical}\rm
         A collection of cells $\mathcal{C}$ is said to be \textit{minimally non-radical} if $ I_{\mathrm{adj}}(\cC)$ is not radical and, for every cell $C$ of $\mathcal{C}$ such that $\mathcal{C} \setminus \{C\}$ is weakly connected, the adjacent $2$-minor ideal of $\mathcal{C} \setminus \{C\}$ is radical. 
     \end{Definition}

The computational setup and scripts described in the following remark allow us to
visualize and identify collections of cells that are minimally non-radical and to
derive necessary conditions for radicality.

\begin{Remark}\rm \label{Rmk: scripts}
 We have implemented scripts which are publicly available in~\cite{N} and are briefly described below.
 
\smallskip
\noindent
\textup{(1)} Using \textit{SageMath} \cite{sage}, we provide a routine that, for a fixed rank $n$, computes the set $L$ of all collections of cells of rank $n$. The resulting datasets consist of plain-text files listing all tested collections of cells. Each collection is encoded as a list of lists, where each cell is represented by the pair of lists corresponding to its diagonal corners. For instance, a square tetromino with vertex set $[(1,1),(3,3)]$ is encoded as
\[
\texttt{\{\{\{1,1\},\{2,2\}\},\{\{2,1\},\{3,2\}\},\{\{1,2\},\{2,3\}\},\{\{2,2\},\{3,3\}\}\}}.
\]

\noindent
The implementation also includes a routine for filtering out collections that contain
prescribed subconfigurations.

\smallskip
\noindent
\textup{(2)} In \textit{Macaulay2} \cite{M2}, we provide a concise procedure that defines the adjacent $2$-minor ideal of a collection of cells (recently incorporated into the \textit{Macaulay2} package \textit{PolyominoIdeals}~\cite{CNJ} under the name
\href{https://www.macaulay2.com/doc/Macaulay2/share/doc/Macaulay2/PolyominoIdeals/html/_adjacent2__Minor__Ideal.html}{\texttt{adjacent2MinorIdeal}}) and filters out those collections whose associated ideal is non-radical.

\smallskip
\noindent
\textup{(3)} Finally, we include a \textit{Python} script for visualizing the resulting collections of cells, thereby providing a geometric interpretation of the configurations.
\end{Remark}

  The following result introduces an infinite family of minimally non-radical collections of cells.

    \begin{Proposition}\label{Propostion: minimal non-radical}
      Let $\mathcal{D}_t$ be the collection of cells illustrated in Figure~\ref{Figure: Lt}, with $t \geq 2$.
\begin{figure}[h]
	\centering
	\includegraphics[scale=0.9]{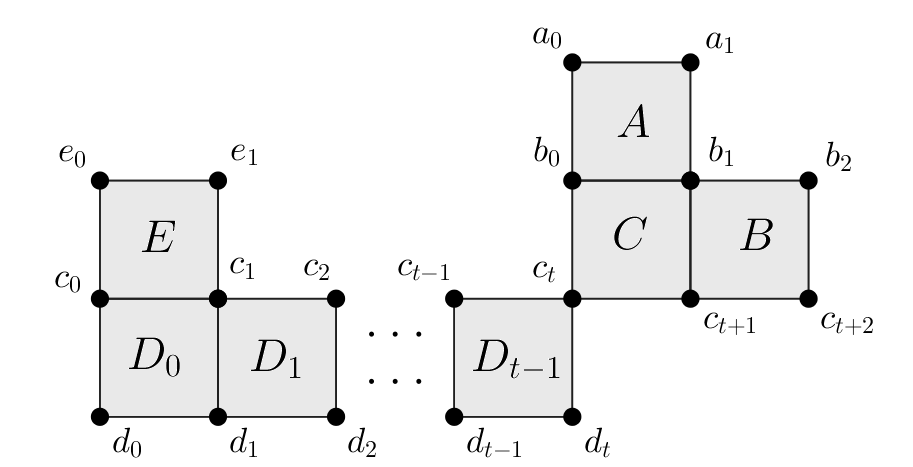}
	\caption{$\mathcal{D}_t$}
	\label{Figure: Lt}
\end{figure}
Then the following statements hold:
\begin{enumerate}
	\item[{\em (1)}] Let $<$ be the lexicographic order induced by the following ordering of the variables:
	\[
    e_1 < e_0 < c_t < \dots < c_0 < d_t < \dots < d_0 < c_{t+2} < c_{t+1} < b_2 < b_1 < b_0 < a_1 < a_0.
	\]
	The reduced Gr\"obner basis of $ I_{\mathrm{adj}}(\mathcal{D}_t)$ with respect to $<$ is given by
	\[
		\cG\left( I_{\mathrm{adj}}(\mathcal{D}_t)\right) \cup \left\{x_{a_0}x_{b_2}x_{c_{t+1}} - x_{b_0}x_{a_1}x_{c_{t+2}}, x_{a_1}x_{b_0}^2x_{c_{t+2}}-x_{a_1}x_{b_0}x_{b_2}x_{c_t}  \right\}.
	\]		
	\item[{\em (2)}] The configuration $\mathcal{D}_t$ is minimally non-radical.
    \item[{\em (3)}] The ideal $I_{\mathrm{adj}}(\mathcal{D}_t)$ is a complete intersection.
	\end{enumerate}
    \end{Proposition}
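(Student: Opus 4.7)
The plan is to address the three parts in the order (1), (3), (2), since the explicit Gr\"obner basis obtained in (1) supplies the computational machinery needed for both the complete intersection statement and the non-radicality witness.

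For (1), I would apply Buchberger's criterion to the proposed basis $\mathcal{B} = \mathcal{G}(I_{\mathrm{adj}}(\mathcal{D}_t)) \cup \{p_1, p_2\}$, where
\[
p_1 = x_{a_0}x_{b_2}x_{c_{t+1}} - x_{b_0}x_{a_1}x_{c_{t+2}}, \qquad
p_2 = x_{a_1}x_{b_0}^2 x_{c_{t+2}} - x_{a_1}x_{b_0}x_{b_2}x_{c_t}.
\]
First I would record the initial monomial of each element of $\mathcal{B}$ under the given lexicographic order. Because the variables are ordered so that the ``hub'' $\{a_0,a_1,b_0,b_1,b_2,c_{t+1},c_{t+2}\}$ receives the largest weights, the vast majority of pairs $(f,g)\in \mathcal{G}(I_{\mathrm{adj}}(\mathcal{D}_t))^2$ satisfy $\gcd(\mathrm{in}_<(f),\mathrm{in}_<(g))=1$, and their S-polynomials reduce to zero for free. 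The non-trivial S-pairs are localized around the hub; a direct inspection of these should produce $p_1$ and $p_2$ as the unique new reductions, after which one verifies that all S-pairs involving $p_1$ or $p_2$ again reduce to zero modulo $\mathcal{B}$.

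For (3), I would combine (1) with Proposition~\ref{Prop: LC is a minial prime}, which already gives $\mathrm{ht}(I_{\mathrm{adj}}(\mathcal{D}_t)) \leq |\mathcal{D}_t|$. To establish the reverse inequality, I would order the cells of $\mathcal{D}_t$ so that each successive adjacent $2$-minor introduces a ``fresh'' variable not appearing in any earlier minor, exhibiting a regular sequence of length $|\mathcal{D}_t|$; equivalently, one could read off this bound from the initial ideal computed in (1). Since $|\mathcal{G}(I_{\mathrm{adj}}(\mathcal{D}_t))| = |\mathcal{D}_t|$ equals the height, $I_{\mathrm{adj}}(\mathcal{D}_t)$ is a complete intersection (and, a posteriori, unmixed by Theorem~\ref{Thm: Unmixed = CI}).

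For (2), the pivotal observation is the factorization
\[
p_2 = x_{a_1}x_{b_0}\bigl(x_{b_0}x_{c_{t+2}} - x_{b_2}x_{c_t}\bigr).
\]
Setting $f := x_{b_0}x_{c_{t+2}} - x_{b_2}x_{c_t}$, the strategy is to show $f \in \sqrt{I_{\mathrm{adj}}(\mathcal{D}_t)} \setminus I_{\mathrm{adj}}(\mathcal{D}_t)$. Non-membership in $I_{\mathrm{adj}}(\mathcal{D}_t)$ is checked by computing the normal form of $f$ against the Gr\"obner basis from (1) and observing that it is nonzero (the monomials $x_{b_0}x_{c_{t+2}}$ and $x_{b_2}x_{c_t}$ are already in normal form). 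Membership in the radical is obtained from Corollary~\ref{Coro: primary decomposition}: for every admissible $W \subseteq V(\mathcal{D}_t)$, one checks that $f \in P_W(\mathcal{D}_t)$, splitting into the case where $W$ meets $\{b_0,b_2,c_t,c_{t+2}\}$ (which forces enough variable generators into $P_W$ to kill $f$) and the case where $W$ is disjoint from these vertices (in which case $f \in L_{(\mathcal{D}_t)_W}$ follows from the lattice structure, since $e_{b_0}+e_{c_{t+2}}-e_{b_2}-e_{c_t}$ can be expressed as an integer combination of the cell vectors through the hub). For the minimality of the non-radical configuration, I would verify cell by cell that when $C \in \mathcal{D}_t$ is removed and $\mathcal{D}_t\setminus\{C\}$ remains weakly connected, the resulting collection breaks into pieces that are either path polyominoes or parallelogram paths, whose adjacent $2$-minor ideals are already known to be radical (in fact prime); in each such deletion, the obstruction encoded by $p_2$ disappears from the Gr\"obner basis.

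The hardest step is the S-pair bookkeeping in (1): one must be sure that no additional binomials beyond $p_1$ and $p_2$ arise during the completion, since the whole argument for (2) hinges on the precise form of $p_2$ and the identification of $f$ as a non-squarefree witness. The secondary difficulty in (2) is verifying the radical membership of $f$ for \emph{all} admissible sets $W$; a clean uniform argument, rather than a brute-force case analysis, will require exploiting the symmetry of the hub of $\mathcal{D}_t$.
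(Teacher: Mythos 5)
There is a genuine gap in your proof of part (2): the element you propose as a radical witness is in fact \emph{not} in the radical. Your $f = x_{b_0}x_{c_{t+2}} - x_{b_2}x_{c_t}$ is (up to sign) the inner $2$-minor of the $1\times 2$ rectangle $\{B,C\}$, and from $p_2 = x_{a_1}x_{b_0}\,f \in I_{\mathrm{adj}}(\mathcal{D}_t)$ you only learn that $f$ lies in those minimal primes that avoid $x_{a_1}$ and $x_{b_0}$. Concretely, take $W=\{a_1,b_1,c_{t+1}\}$. Using the edge structure forced by the generators ($\{a_1,b_1\}$ is an edge of $A$, and $\{b_1,c_{t+1}\}$ is an edge of both $B$ and $C$, while none of $a_1,b_1,c_{t+1}$ lies on $E$ or any $D_i$), $W$ is admissible, $(\mathcal{D}_t)_W=\{E,D_0,\dots,D_{t-1}\}$, and $\mathrm{ht}(P_W)=3+(t+1)=|\mathcal{D}_t|$; moreover the only admissible subsets of $W$ are $\emptyset$ and $W$ itself, and $L_{\mathcal{D}_t}\not\subseteq P_W$, so by Theorem~\ref{Thm: admissible set} and Remark~\ref{rem:primePW} the ideal $P_W(\mathcal{D}_t)$ is a minimal prime. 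But $W$ is disjoint from $\{b_0,b_2,c_t,c_{t+2}\}$, and since $B,C\notin(\mathcal{D}_t)_W$ the vector $\mathbf{v}_{b_0}+\mathbf{v}_{c_{t+2}}-\mathbf{v}_{b_2}-\mathbf{v}_{c_t}=-(\mathbf{v}_B+\mathbf{v}_C)$ is no longer available in $\Lambda_{(\mathcal{D}_t)_W}$; hence neither term of $f$ lies in $(x_w:w\in W)$ and $f\notin L_{(\mathcal{D}_t)_W}$, so $f\notin P_W\supseteq\sqrt{I_{\mathrm{adj}}(\mathcal{D}_t)}$. Your dichotomy (``$W$ meets $\{b_0,b_2,c_t,c_{t+2}\}$'' versus ``$W$ is disjoint from these vertices, whence $f$ lies in the lattice ideal'') overlooks precisely such $W$: disjoint from the four corners of the rectangle yet meeting $B$ and $C$. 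This is the same phenomenon exhibited in Remark~\ref{ex:notmin}, where the inner minor of a $1\times 2$ rectangle fails to lie in the radical of the adjacent $2$-minor ideal.

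The paper's witness is instead the binomial $f_t = x_{a_0}x_{b_2}x_{c_{t+1}}x_{d_1}^2x_{d_2}\cdots x_{d_{t-1}}x_{e_0} - x_{a_0}x_{b_2}x_{c_{t+1}}x_{d_0}x_{d_1}\cdots x_{d_{t-1}}x_{e_1}$, which is in normal form with respect to the Gr\"obner basis from (1) (hence not in the ideal) while $f_t^2$ reduces to zero; note that it necessarily involves the $d$- and $e$-variables, so the tail cells $E,D_0,\dots,D_{t-1}$ are essential to producing a nilpotent, which is invisible in your hub-only analysis. Your minimality argument is also not sound as stated: deleting $A$ or $B$ does not split $\mathcal{D}_t$ into vertex-disjoint pieces, and adjacent $2$-minor ideals of path polyominoes are generally not prime (the L-tromino already has four minimal primes); the paper instead exhibits, for each admissible deletion, a term order under which the adjacent $2$-minors themselves form a Gr\"obner basis with squarefree (pairwise coprime) initial terms, whence radicality. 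Parts (1) and (3) of your sketch are essentially compatible with the paper — the same Buchberger computation for (1), and for (3) the paper bounds the height from below via the admissible-set description of minimal primes rather than your initial-ideal/regular-sequence sketch (of which only the initial-ideal variant is workable, since no order can make all initial terms pairwise coprime for a non-radical ideal with squarefree quadratic leading candidates).
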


    \begin{proof}
      (1) With respect to $<$, the initial monomials of the adjacent $2$-minors corresponding to the cells $E, D_0, \ldots, D_{t-1}$ are the diagonal terms, whereas the initial monomials of the adjacent $2$-minors corresponding to the cells $A, B,$ and $C$ are the antidiagonal terms. See Figure~\ref{Figure: Lt 1}, where the initial monomials associated with each cell are indicated.

    \begin{figure}[h]
	\centering
	\includegraphics[scale=0.9]{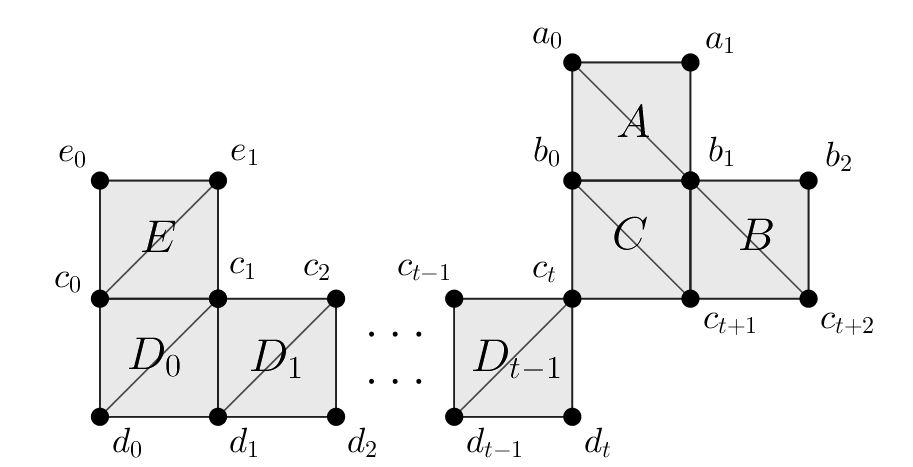}
	\caption{A representation of the leading terms.}
	\label{Figure: Lt 1}
    \end{figure}

      We apply the classical Buchberger’s algorithm to determine the reduced Gr\"obner basis of $I_{\mathrm{adj}}(\mathcal{D}_t)$ with respect to $<$. 

The only non-trivial $S$-polynomial to compute is that of $f_A=  x_{a_0}x_{b_1}-x_{a_1}x_{b_0}$ and $f_B= x_{b_1}x_{c_{t+2}}-x_{b_2} x_{c_{t+1}}$, since $\gcd(\mathrm{in}_{<}(f_A), \mathrm{in}_{<}(f_B)) = x_{b_1}$. In this case, we have 
$$
S(f_A,f_B) = x_{a_0}x_{b_2}x_{c_{t+1}} - x_{b_0}x_{a_1}x_{c_{t+2}} 
\quad \text{and} \quad 
\mathrm{in}_{<}(S(f_A,f_B)) = x_{a_0}x_{b_2}x_{c_{t+1}}.
$$
This binomial cannot be reduced to $0$ modulo $\cG(I_{\mathrm{adj}}(\mathcal{D}_t))$. 
Let $\cG_0 = \mathcal{G}(I_{\mathrm{adj}}(\mathcal{D}_t)) \cup \{ x_{a_0}x_{b_2}x_{c_{t+1}} - x_{b_0}x_{a_1}x_{c_{t+2}} \}$ and set $g = x_{a_0}x_{b_2}x_{c_{t+1}} - x_{b_0}x_{a_1}x_{c_{t+2}}$. 
Now observe that 
$$
S(g,f_A) = x_{a_1}x_{b_0}x_{b_1}x_{c_{t+2}} - x_{a_1}x_{b_0}x_{b_2}x_{c_{t+1}} 
\quad \text{and} \quad 
\mathrm{in}_{<}(S(g,f_A)) = x_{a_1}x_{b_0}x_{b_1}x_{c_{t+2}},
$$
so $S(g,f_A)$ reduces to $0$ modulo $\cG_0$, since $\mathrm{in}_{<}(f_B) = x_{b_1}x_{c_{t+2}}$ and 
$
S(g,f_A) = x_{a_1}x_{b_0}(x_{b_1}x_{c_{t+2}} - x_{b_2}x_{c_{t+1}}).
$
The other case involves 
$$
S(g,f_C) = x_{c_{t+2}}x_{b_0}^2x_{a_1} - x_{a_0}x_{b_1}x_{b_2}x_{c_t} 
\quad \text{where} \quad 
\mathrm{in}_{<}(S(g,f_C)) = x_{a_0}x_{b_1}x_{b_2}x_{c_t}.
$$
A straightforward computation shows that $S(g,f_C)$ reduces to $x_{a_1}x_{b_0}^2x_{c_{t+2}}-x_{a_1}x_{b_0}x_{b_2}x_{c_t}$ modulo $\cG_0$. 
We now set
$$
\cG_1 = \cG_0 \cup \{ x_{a_1}x_{b_0}^2x_{c_{t+2}}-x_{a_1}x_{b_0}x_{b_2}x_{c_t} \},
$$
and denote $h = x_{a_1}x_{b_0}^2x_{c_{t+2}}-x_{a_1}x_{b_0}x_{b_2}x_{c_t}$. 
Routine calculations, which the reader can easily verify, show that the non-trivial $S$-polynomials $S(h,g)$, $S(h,f_B)$, and $S(h,f_C)$ all reduce to $0$ modulo $\cG_1$. 
Therefore, $\cG_1$ is the reduced Gr\"obner basis of $I_{\mathrm{adj}}(\mathcal{D}_t)$ with respect to $<$.
       
\smallskip
\noindent
\textup{(2)}  Let $t \geq2$ and define the following binomial:
 \[
 f_t = x_{a_0} x_{b_2} x_{c_{t+1}} x_{d_1}^2 x_{d_2}\cdots x_{d_{t-1}} x_{e_0} - x_{a_0} x_{b_2} x_{c_{t+1}} x_{d_0} x_{d_1}\cdots x_{d_{t-1}} x_{e_1}.
 \]
Denote by $\cG_1$ the reduced Gr\"obner basis of $ I_{\mathrm{adj}}(\mathcal{D}_t)$ given in (1). From straightforward computations, it follows that $f$ cannot be reduced to $0$ modulo $\cG_1$, whereas $f^2$ reduces to $0$ modulo $\cG_1$. Hence, $ I_{\mathrm{adj}}(\mathcal{D}_t)$ is not radical. Moreover, to obtain a weakly connected collection of cells by removing a cell from $\mathcal{D}_t$, one must remove either $A$, $B$, $E$, or $D_0$. We distinguish the following two cases:

\begin{itemize}
    \item[(a)] Denote by $\cC$ the resulting weakly connected collection obtained by removing either $A$ or $B$ (see Figure \ref{Figure: Propo minimally non-radical} (A)). Observe that the lexicographic order $<$ defined in (1) ensures that $\cG( I_{\mathrm{adj}}(\cC))$ forms the reduced Gr\"obner basis of $ I_{\mathrm{adj}}(\cC)$, since $\mathrm{gcd}(\mathrm{in}_{<}(f),\mathrm{in}_{<}(g)) = 1$ for all $f,g \in \cG( I_{\mathrm{adj}}(\cC))$. Therefore, $I_{\mathrm{adj}}(\cC)$ is radical. 
    \item[(b)] Denote by $\cC'$ the resulting weakly connected collection obtained by removing either $E$ or $D_0$ (see Figure \ref{Figure: Propo minimally non-radical} (B)). Let $<'$ be the lexicographic order induced by the following ordering of the variables:
	\[
   d_t < \dots < d_0 < c_t < \dots < c_0 < e_0 < e_1 < c_{t+1} < c_{t+2} < b_0 < b_1 < b_2 < a_0 < a_1.
	\]
	Then the set of all adjacent 2-minors of $\Cc'$ form the reduced Gr\"obner basis of $ I_{\mathrm{adj}}(\mathcal{D}_t)$ with respect to $<'$. Therefore, $I_{\mathrm{adj}}(\cC')$ is radical.  
\end{itemize}
In conclusion, $\mathcal{D}_t$ is minimally non-radical.

\begin{figure}[h]
 	\centering
 	\subfloat[]{\includegraphics[scale=0.9]{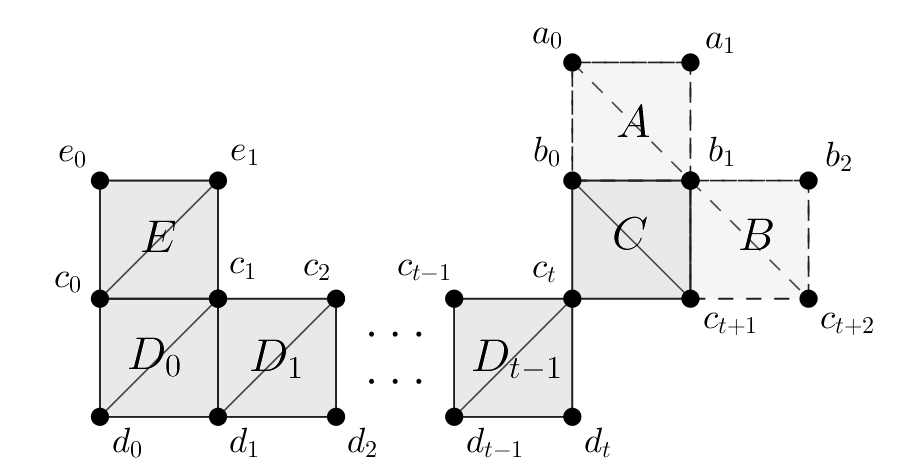}}\qquad\qquad
	\subfloat[]{\includegraphics[scale=0.9]{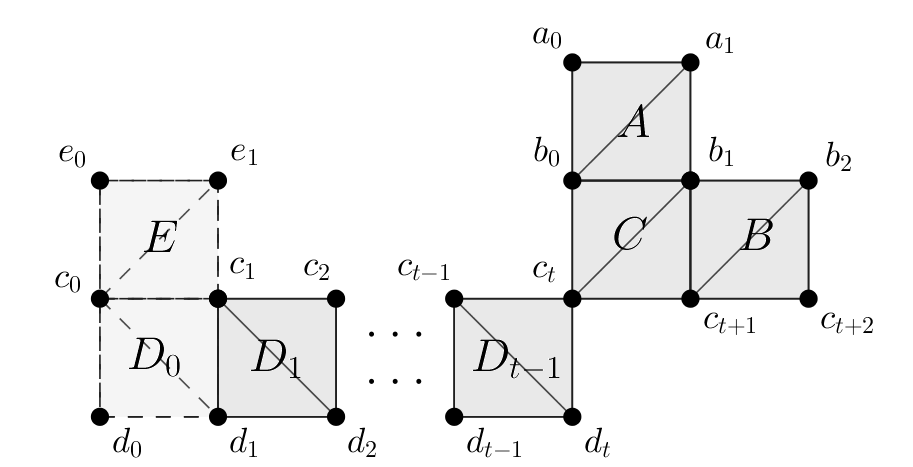}}
    \caption{Cases for the proof of Proposition \ref{Propostion: minimal non-radical}, part (2).}
	\label{Figure: Propo minimally non-radical}
\end{figure}

\smallskip
\noindent
\textup{(3)} We now show that $I_{\mathrm{adj}}(\mathcal{D}_t)$ is complete intersection, that is, $\mathrm{ht}(I_{\mathrm{adj}}(\mathcal{D}_t))=\vert \mathcal{D}_t\vert$. By Proposition \ref{Prop: LC is a minial prime}, we know that $\mathrm{ht}(I_{\mathrm{adj}}(\mathcal{D}_t))\leq \vert \mathcal{D}_t\vert$. Now, let $\mathfrak{p}$ be a minimal prime of $\mathcal{D}_t$. By Theorem \ref{Thm: admissible set} there exists an admissible set of $\mathcal{D}_t$ such that $\mathfrak{p}=P_W(\mathcal{D}_t)$. By Remark~\ref{rem:primePW}, $\mathrm{ht}(\mathfrak{p})=\vert W\vert + \vert (\cD_{t})_{W}\vert$. 
Due to the shape of $\mathcal{D}_t$, it is easy to check that for every admissible set $W$ we have $\vert W\vert + \vert (\cD_{t})_{W}\vert\geq \vert \mathcal{D}_t\vert$, hence $\mathrm{ht}(\mathfrak{p})\geq \vert \mathcal{D}_t\vert$.  
    In conclusion we have $\mathrm{ht}(I_{\mathrm{adj}}(\mathcal{D}_t))=\vert \mathcal{D}_t\vert$, so $I_{\mathrm{adj}}(\mathcal{D}_t)$ is complete intersection.
    \end{proof}

Combining the explicit construction above with computational evidence, we obtain the following result on the existence of minimally non-radical collections of cells by rank.

\begin{Theorem}\label{Thm: infinity minimally non-radical}
Let $\mathfrak{C}(n)$ denote the set of all collections of cells of rank $n\geq 1$. Then:
\begin{enumerate}
    \item For every $\cC \in \mathfrak{C}(1) \cup \mathfrak{C}(2) \cup \mathfrak{C}(3)$, the ideal $I_{\mathrm{adj}}(\cC)$ is radical.
    \item There are no minimally non-radical collections of cells in $\mathfrak{C}(5)$.
    \item For $n = 4$ and for every $n \geq 6$, there exists a minimally non-radical collection of cells in $\mathfrak{C}(n)$.
\end{enumerate}
\end{Theorem}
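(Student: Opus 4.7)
The plan is to attack the three parts separately, using small-rank case analysis for~(1), exhaustive computer algebra search for~(2), and explicit constructions for~(3).

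For~(1) I would handle the three ranks one at a time. When $n=1$, the ideal is generated by a single irreducible binomial $x_ax_b - x_cx_d$, hence prime. For $n=2$ and $n=3$ there are, up to the rigid symmetries of $\ZZ^2$, only a small finite list of weakly connected configurations (a domino and a vertex-touching pair for $n=2$; the straight and L-trominoes together with the configurations in which at least one adjacency is only through a vertex for $n=3$). For each of these I would exhibit a lexicographic monomial order under which the adjacent $2$-minors themselves form a Gr\"obner basis with pairwise coprime squarefree initial monomials; radicality of $I_{\mathrm{adj}}(\cC)$ then follows from squarefreeness of the initial ideal. Equivalently, the verification can be carried out with the pipeline of Remark~\ref{Rmk: scripts} applied to this very short list.

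For~(2), the approach is exhaustive computation. Using the \emph{SageMath} enumeration from~\cite{N}, I would generate every weakly connected rank-$5$ collection of cells (up to reflection and rotation), compute $I_{\mathrm{adj}}(\cC)$ via \texttt{adjacent2MinorIdeal} in \emph{Macaulay2}, and test radicality. For each non-radical $\cC$ the same search records a cell $C$ such that $\cC \setminus \{C\}$ is weakly connected and already non-radical, so that $\cC$ fails to be minimally non-radical. This step is expected to be the main obstacle: although the enumeration is finite, for rank~$5$ it produces a substantial list of configurations, and the deletion certificates must be produced and archived uniformly. The outputs are deposited in the ancillary folder of~\cite{N}.

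For~(3), I would give two separate constructions. For every $n \geq 6$, setting $t = n-4 \geq 2$, the collection $\mathcal{D}_t$ of Proposition~\ref{Propostion: minimal non-radical} has rank $t+4 = n$ and is minimally non-radical by part~(2) of that proposition; this settles all $n \geq 6$ simultaneously. For $n = 4$, I would exhibit an explicit weakly connected collection of four cells, identified through the same SageMath/Macaulay2 pipeline, whose adjacent $2$-minor ideal is non-radical and such that every weakly connected $3$-cell subcollection obtained by removing a single cell is radical, the latter being automatic from part~(1). This explicit tetromino will be included in~\cite{N}, completing the proof.
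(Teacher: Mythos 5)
Your proposal matches the paper's proof in all essentials: parts (1) and (2) are delegated to the exhaustive computational pipeline of Remark~\ref{Rmk: scripts} and~\cite{N}, and part (3) is settled by taking $\mathcal{D}_{n-4}$ (of rank $n$) from Proposition~\ref{Propostion: minimal non-radical} for $n\geq 6$ and an explicit tetromino for $n=4$ --- the paper uses the square tetromino, whose three-cell deletions are radical exactly as you note via part (1). The optional by-hand Gr\"obner basis verification you sketch for ranks $1$--$3$ is a harmless embellishment rather than a different route.
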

\begin{proof}
Statements (1) and (2) follow from the computations in \cite{N}. Let us establish statement~(3):
\begin{itemize}
    \item if $n = 4$, it suffices to consider the square tetromino $\cS$ (see Firgure~\ref{fig: square and X-pento}). Indeed, using \textit{Macaulay2} (\cite{CNJ, M2}), one verifies that $I_{\mathrm{adj}}(\cS)$ is not radical and removing any cell from $\cS$ yields the polyomino shown in Figure~\ref{Figure: Example weakly conn. coll. of cells} (B), whose adjacent $2$-minor ideal is radical.
    \item if $n \geq 6$, the claim follows from point~(2) of Proposition~\ref{Propostion: minimal non-radical}. 
    \end{itemize}
This concludes the proof.
\end{proof}

We now turn to necessary conditions for radicality. The following lemma and proposition provides a useful tool for this purpose.

   \begin{Lemma}\label{Lemma: suff cond}
 Let $J$ be an ideal of $K[x_1, \ldots, x_n, y_1, \ldots, y_m]$ such that $J=I+L$, with $I$ and $L$ ideals of $K[x_1, \ldots, x_n]$ and $K[y_1, \ldots, y_m]$, respectively. If $I$ or $L$ is not radical, then $J$ is not radical.
   \end{Lemma}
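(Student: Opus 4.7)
The plan is to reduce the lemma to a simple statement about tensor products of $K$--algebras. Since the variable sets $\{x_1,\dots,x_n\}$ and $\{y_1,\dots,y_m\}$ are disjoint, the ring $R := K[x_1,\dots,x_n,y_1,\dots,y_m]$ decomposes as $R \cong K[x]\otimes_K K[y]$, and under this identification the extensions of $I$ and $L$ to $R$ correspond respectively to $I\otimes_K K[y]$ and $K[x]\otimes_K L$. Quotienting by their sum yields a canonical $K$--algebra isomorphism
\[
R/J \;\cong\; (K[x]/I)\otimes_K (K[y]/L).
\]
This converts the problem into a question about the existence of nilpotents in the tensor product.

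Assume without loss of generality that $I$ is not radical (the case where $L$ is not radical is handled symmetrically). Then there exists $f \in K[x]$ such that $\bar{f} := f + I$ is nonzero in $K[x]/I$, while $\bar{f}^{\,k} = 0$ in $K[x]/I$ for some $k \geq 2$. The idea is to use $\bar{f}$ to exhibit a nonzero nilpotent element of $R/J$. The natural candidate is $\bar{f}\otimes \bar{1}$, which is automatically nilpotent because $(\bar{f}\otimes \bar{1})^{k}=\bar{f}^{\,k}\otimes \bar{1}=0$; the only step that requires justification is its non-vanishing.

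The main (though mild) technical point is precisely this non-vanishing, and it follows from a standard basis argument. Since $K$ is a field, both $K[x]/I$ and $K[y]/L$ are $K$--vector spaces, so one can extend $\bar{f}$ to a $K$--basis of $K[x]/I$ and $\bar{1}$ to a $K$--basis of $K[y]/L$; then $\bar{f}\otimes \bar{1}$ appears as one of the elementary tensors forming a basis of the tensor product, hence is nonzero. Here one uses that $\bar{1}\neq 0$ in $K[y]/L$, which holds whenever $L$ is a proper ideal of $K[y]$; this is the only setting of interest, since $L = K[y]$ would force $J = R$, making the statement vacuous. Once $\bar{f}\otimes \bar{1}$ is recognized as a nonzero nilpotent of $R/J$, the conclusion that $J$ is not radical is immediate.
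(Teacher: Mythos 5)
Your proposal is correct, and it reaches the same witness as the paper --- a polynomial $f\in K[x_1,\dots,x_n]$ with $f\notin I$ and $f^k\in I$, which is then shown to lie outside $J$ while $f^k\in J$ --- but the justification of the crucial non-membership $f\notin J$ is genuinely different. The paper argues directly with ideal membership in the big ring: from $f\in J=I+L$ and $f\in K[x_1,\dots,x_n]$ it asserts that $f\in I$, a step that is stated rather tersely and really amounts to the identity $(IR+LR)\cap K[x_1,\dots,x_n]=I$ for $R=K[x_1,\dots,x_n,y_1,\dots,y_m]$. You instead pass to the quotient, identify $R/J\cong (K[x]/I)\otimes_K(K[y]/L)$, and verify via a basis extension that $\bar f\otimes\bar 1\neq 0$; this is a cleaner and fully rigorous route to exactly the fact the paper uses implicitly. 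A further merit of your write-up is that it makes explicit the hypothesis that $L$ (respectively $I$) be a proper ideal: if $L=K[y_1,\dots,y_m]$ then $J=R$ is radical even when $I$ is not, so the lemma as literally stated needs this (harmless in the paper's applications, where the ideals involved are generated by elements of positive degree) properness assumption, which the paper's proof glosses over.
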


   \begin{proof}
    If $I$ is not radical, then there exists $f\in K[x_1, \ldots, x_n]$ such that $f\notin I$ but $f^k\in I$ for some $k\geq 2$. Since $I\subset J$, we obtain $f^k\in J$. 
    Suppose that $f \in J=I+L$. Since $f\in K[x_1, \ldots, x_n]$, it follows that $f \notin L$ and $f \in I$, which is a contradiction. Therefore, $f \notin J$, and consequently $J$ is not radical, as claimed. Same argument can be done if $L$ is not radical.
\end{proof}

\begin{Proposition}\label{prop:easy}
 Let $\cC$ and $\cC'$ be two collections of cells such that
$\cC' \subset \cC$. Suppose that for every cell
$A \in \cC \setminus \cC'$, at least one edge of $A$ lies in
$V(\mathcal{C}) \setminus V(\mathcal{C}')$.  If $I_{\mathrm{adj}}(\cC')$ is not radical then $I_{\mathrm{adj}}(\cC)$ is not radical. 
\end{Proposition}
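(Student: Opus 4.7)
The plan is to reduce the non-radicality of $I_{\mathrm{adj}}(\mathcal{C})$ to that of $I_{\mathrm{adj}}(\mathcal{C}')$ by establishing the contraction identity
\[
I_{\mathrm{adj}}(\mathcal{C}) \cap S_{\mathcal{C}'} = I_{\mathrm{adj}}(\mathcal{C}').
\]
Once this is known, if $f \in S_{\mathcal{C}'}$ satisfies $f^k \in I_{\mathrm{adj}}(\mathcal{C}')$ but $f \notin I_{\mathrm{adj}}(\mathcal{C}')$, then $f^k \in I_{\mathrm{adj}}(\mathcal{C}')\subseteq I_{\mathrm{adj}}(\mathcal{C})$, while $f \notin I_{\mathrm{adj}}(\mathcal{C})$ (otherwise $f$ would belong to the contraction and hence to $I_{\mathrm{adj}}(\mathcal{C}')$, a contradiction); this witnesses the non-radicality of $I_{\mathrm{adj}}(\mathcal{C})$. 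The inclusion $I_{\mathrm{adj}}(\mathcal{C}') \subseteq I_{\mathrm{adj}}(\mathcal{C}) \cap S_{\mathcal{C}'}$ is clear, since every generator of $I_{\mathrm{adj}}(\mathcal{C}')$ is already a generator of $I_{\mathrm{adj}}(\mathcal{C})$.

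The first step is to exploit the hypothesis combinatorially. Fix a cell $A \in \mathcal{C}\setminus \mathcal{C}'$ with diagonal corners $a,b$ and anti-diagonal corners $c,d$, and let $\{u,v\}$ be an edge of $A$ with $u,v \in V(\mathcal{C})\setminus V(\mathcal{C}')$. A straightforward case analysis over the four possible choices of edge shows that each of the monomials $x_a x_b$ and $x_c x_d$ contains at least one of the variables $x_u, x_v$; equivalently, both monomials of the adjacent $2$-minor attached to $A$ involve at least one variable outside $S_{\mathcal{C}'}$.

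With this in hand, I would introduce the $K$-algebra homomorphism
\[
\phi \colon S_{\mathcal{C}} \to S_{\mathcal{C}'}, \qquad \phi(x_v) = x_v \text{ if } v\in V(\mathcal{C}'), \qquad \phi(x_v)=0 \text{ if } v\in V(\mathcal{C})\setminus V(\mathcal{C}').
\]
By the previous observation, $\phi$ sends the adjacent $2$-minor of every cell of $\mathcal{C}\setminus\mathcal{C}'$ to zero, while it fixes the adjacent $2$-minor of every cell of $\mathcal{C}'$. Hence $\phi\bigl(I_{\mathrm{adj}}(\mathcal{C})\bigr)\subseteq I_{\mathrm{adj}}(\mathcal{C}')$. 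Since $\phi$ restricts to the identity on $S_{\mathcal{C}'}$, any $f \in I_{\mathrm{adj}}(\mathcal{C})\cap S_{\mathcal{C}'}$ satisfies $f=\phi(f)\in I_{\mathrm{adj}}(\mathcal{C}')$, yielding the reverse inclusion and completing the contraction identity.

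There is no genuine obstacle in this argument; its whole strength rests on the edge hypothesis, which is exactly what is needed to force both monomials of the new adjacent $2$-minors into the kernel of $\phi$. Dropping this assumption would allow a cell of $\mathcal{C}\setminus\mathcal{C}'$ to have its adjacent $2$-minor survive under $\phi$ as a binomial in $S_{\mathcal{C}'}$ not lying in $I_{\mathrm{adj}}(\mathcal{C}')$, destroying the inclusion $\phi\bigl(I_{\mathrm{adj}}(\mathcal{C})\bigr)\subseteq I_{\mathrm{adj}}(\mathcal{C}')$ and with it the entire reduction.
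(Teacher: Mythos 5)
Your proof is correct, and it rests on exactly the same key observation as the paper's: since every edge of a cell contains one diagonal and one anti-diagonal corner, the hypothesis forces both monomials of the adjacent $2$-minor of any cell in $\cC\setminus\cC'$ to involve a variable indexed by $U=V(\cC)\setminus V(\cC')$. The paper packages this as the containment $I_{\mathrm{adj}}(\cC)\subseteq I_{\mathrm{adj}}(\cC')+(x_u: u\in U)$ followed by an appeal to its Lemma~\ref{Lemma: suff cond} on sums of ideals in disjoint variable sets, whereas you make the underlying retraction $\phi$ (setting the $U$-variables to zero) explicit and record the slightly stronger contraction identity $I_{\mathrm{adj}}(\cC)\cap S_{\cC'}=I_{\mathrm{adj}}(\cC')$; this is the same mechanism, with your version being self-contained and arguably a cleaner justification of the final step.
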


\begin{proof}
  Since  for every cell
$A \in \cC \setminus \cC'$, at least one edge of $A$ lies in
$U=V(\mathcal{C}) \setminus V(\mathcal{C}')$, it yields
\[
I_{\mathrm{adj}}(\mathcal{C})
\subset
I_{\mathrm{adj}}(\mathcal{C}') + (x_u : u \in U).
\]
Then the conclusion follows from Lemma~\ref{Lemma: suff cond}.
\end{proof}

We are now in a position to state a necessary condition for the radicality of adjacent $2$-minor ideals associated with collections of cells.

\begin{Theorem}\label{Prop: necessary conditions}
    Let $\cC$ be a collection of cells. If $I_{\mathrm{adj}}(\cC)$ is radical, then $\cC$ does not contain any of the configurations listed in Table~\ref{tab:nonradical_all}.
\end{Theorem}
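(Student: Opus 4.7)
The plan is to argue by contrapositive: I will show that if $\cC$ contains any of the configurations listed in Table~\ref{tab:nonradical_all}, then $I_{\mathrm{adj}}(\cC)$ fails to be radical. The main engine is Proposition~\ref{prop:easy}, which allows us to lift non-radicality from a subcollection $\cC'$ to any larger collection $\cC$ provided that every cell of $\cC\setminus\cC'$ has at least one edge supported on vertices outside $V(\cC')$. Thus the whole argument reduces to verifying that for each single configuration $\cC_0$ in the table, the ideal $I_{\mathrm{adj}}(\cC_0)$ is not radical, and that the ``added'' cells in any super-configuration can be stripped away as prescribed by Proposition~\ref{prop:easy}.

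First, I would dispatch the family of configurations that match $\mathcal{D}_t$ of Proposition~\ref{Propostion: minimal non-radical}: for these, the explicit binomial $f_t$ constructed in the proof of part~(2) of that proposition, together with its reduced Gröbner basis from part~(1), already certifies non-radicality. Then, for each of the remaining configurations $\cC_0$ in Table~\ref{tab:nonradical_all}, I would proceed as follows: (i) fix a convenient lexicographic order on $S_{\cC_0}$ so that Buchberger's algorithm terminates after only a few extra $S$-polynomials, as in the proof of Proposition~\ref{Propostion: minimal non-radical}~(1); (ii) exhibit explicitly a binomial $f \in S_{\cC_0}$ such that $f\notin I_{\mathrm{adj}}(\cC_0)$ but $f^k \in I_{\mathrm{adj}}(\cC_0)$ for some $k\geq 2$, by reducing $f$ and $f^k$ modulo the reduced Gröbner basis. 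The computational scripts described in Remark~\ref{Rmk: scripts} and available in~\cite{N} can be used both to discover the witness binomial and to check the reductions, so for routine configurations the verification amounts to a finite computation rather than a conceptual argument.

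Once non-radicality is established for each baseline configuration $\cC_0$, the passage to an arbitrary $\cC$ containing $\cC_0$ is handled uniformly. Since each configuration in the table is drawn so that its outer boundary carries the full combinatorial information of the non-radicality witness (the offending binomial is supported on vertices of $\cC_0$), any cell $A\in \cC\setminus \cC_0$ must either lie entirely outside $V(\cC_0)$ or share at most an edge with $\cC_0$; in both situations at least one edge of $A$ lies in $V(\cC)\setminus V(\cC_0)$. Proposition~\ref{prop:easy} then immediately yields that $I_{\mathrm{adj}}(\cC)$ is not radical, completing the contrapositive.

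The hard part will be neither the Gröbner basis computations nor the application of Proposition~\ref{prop:easy}, but rather ensuring that the list in Table~\ref{tab:nonradical_all} is presented in a form that makes the edge-exposure hypothesis of Proposition~\ref{prop:easy} automatically satisfied for every enlargement; this is a subtle bookkeeping issue that must be addressed configuration by configuration, especially for those where a cell of $\cC\setminus \cC_0$ could in principle share two edges with $\cC_0$. I expect that this can be handled by a careful case analysis based on the shape of the boundary of each $\cC_0$, mirroring the style of Discussion~\ref{Discussion: convex polyomino} and the tail constructions used there.
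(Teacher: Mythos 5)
Your overall strategy (contrapositive, certify non-radicality of each tabulated configuration by Gr\"obner/\textit{Macaulay2} computations, then lift to any $\cC$ containing it via Proposition~\ref{prop:easy}) matches the paper's, but the central claim of your third paragraph is false, and it is exactly where the real work of the proof lies. You assert that any cell $A\in\cC\setminus\cC_0$ ``must either lie entirely outside $V(\cC_0)$ or share at most an edge with $\cC_0$,'' and that in both situations some edge of $A$ lies in $V(\cC)\setminus V(\cC_0)$. This is not true: the tabulated configurations have concavities, and a cell of $\cC\setminus\cC_0$ can have all four of its vertices --- hence all four of its edges --- contained in $V(\cC_0)$ (for instance the cells labelled $D,E,F$ adjacent to the rank-$6$ configuration $\cC_1$, or the cells $A,B$ completing the rank-$4$ configuration $\cP$ of Figure~\ref{Figure: no radical coll}). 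For such a $\cC$ the hypothesis of Proposition~\ref{prop:easy} fails with $\cC'=\cC_0$, so your lifting argument breaks down precisely for these enlargements.

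The paper resolves this by a case analysis on which of these ``interior'' cells belong to $\cC$: either their presence forces a smaller forbidden configuration that has already been handled (typically a square tetromino), or one must replace $\cC_0$ by the enlarged collection $\cC_0\cup\{E\}$, $\cC_0\cup\{D,E\}$, etc., verify by a \emph{separate} computation that the adjacent $2$-minor ideal of the enlarged collection is still non-radical (this does not follow from the non-radicality of $I_{\mathrm{adj}}(\cC_0)$ alone), and only then apply Proposition~\ref{prop:easy} to the enlargement, whose complement in $\cC$ does satisfy the edge-exposure condition. You do flag a ``subtle bookkeeping issue'' at the end, but you neither identify the actual failure mode (cells of $\cC\setminus\cC_0$ fully embedded in $V(\cC_0)$) nor supply the mechanism that fixes it (fallback to smaller configurations together with fresh non-radicality certificates for the enlarged ones); appealing to the tail constructions of Discussion~\ref{Discussion: convex polyomino} does not address this. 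As written, the proposal has a genuine gap.
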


\begin{table}[h!]
\centering
\renewcommand{\arraystretch}{1.4}
\setlength{\tabcolsep}{10pt}
\begin{tabular}{|m{1cm}|m{12cm}|}
\hline
\textbf{Rank} & \textbf{Minimally non-radical collections of cells} \\ \hline
\centering 4 &
\includegraphics[scale=0.12]{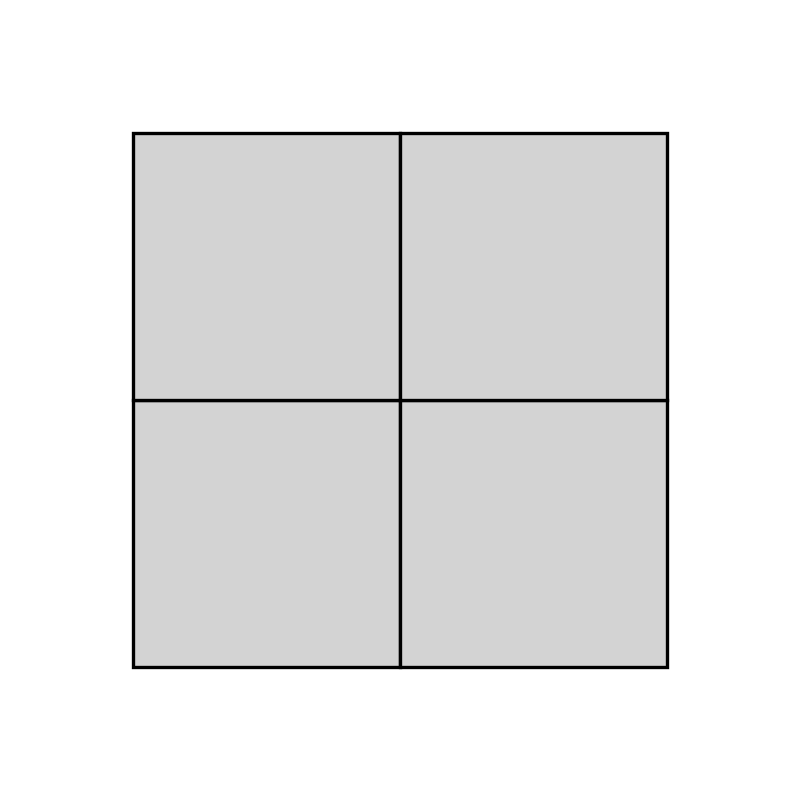} \quad
\includegraphics[scale=0.13]{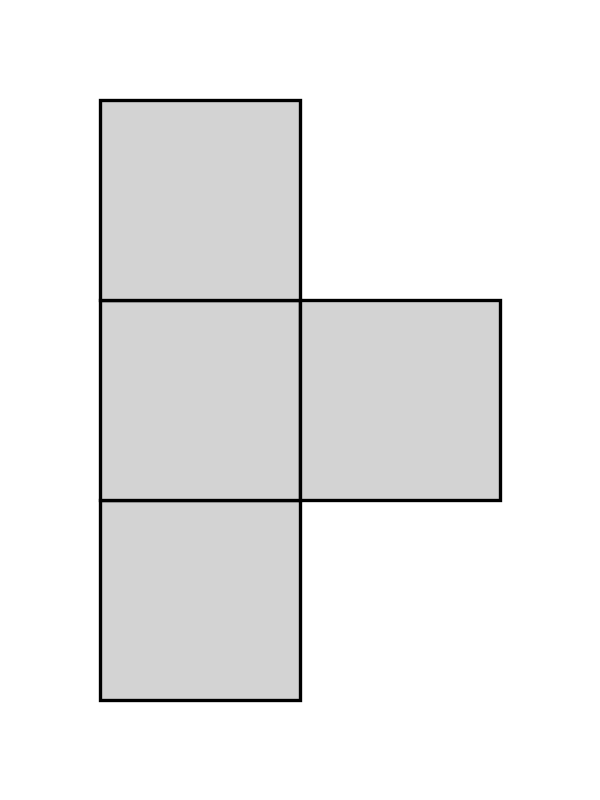} \\ \hline
\centering 6 &
\includegraphics[scale=0.13]{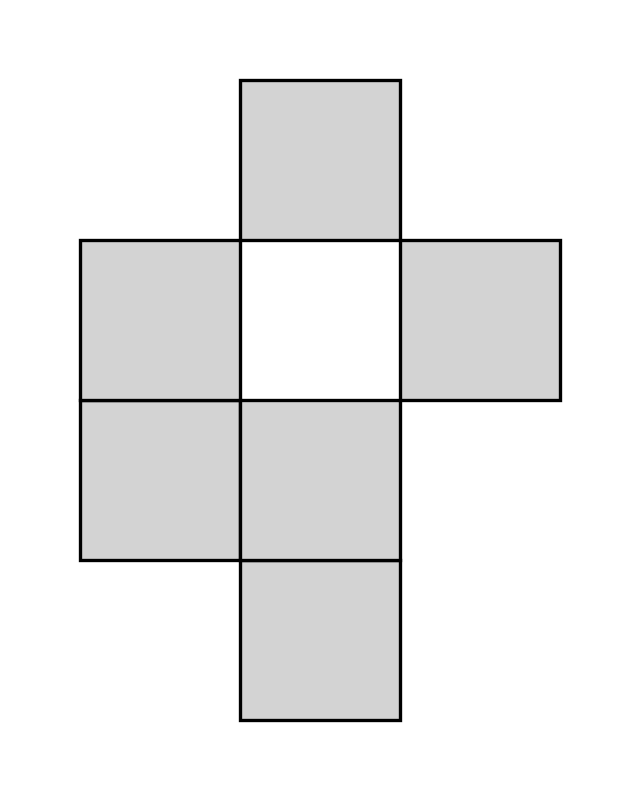} \quad
\includegraphics[scale=0.13]{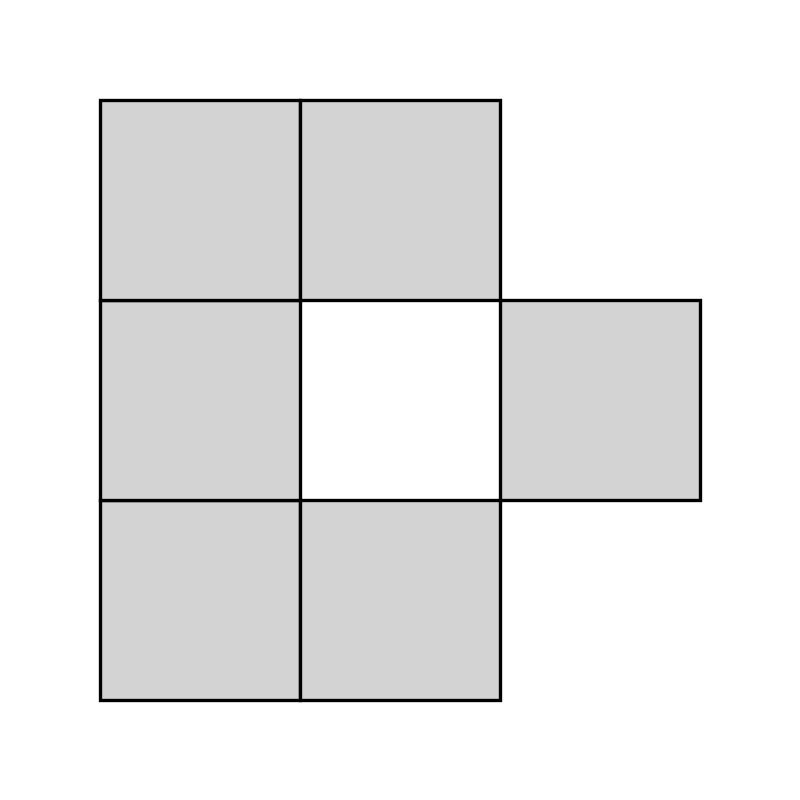} \quad
\includegraphics[scale=0.13]{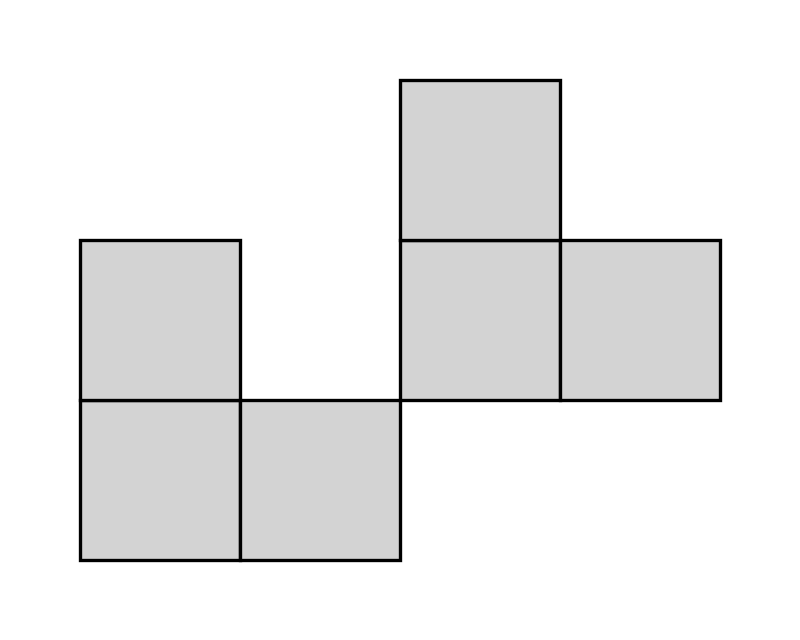} \\ \hline
\centering 7 &
\includegraphics[scale=0.13]{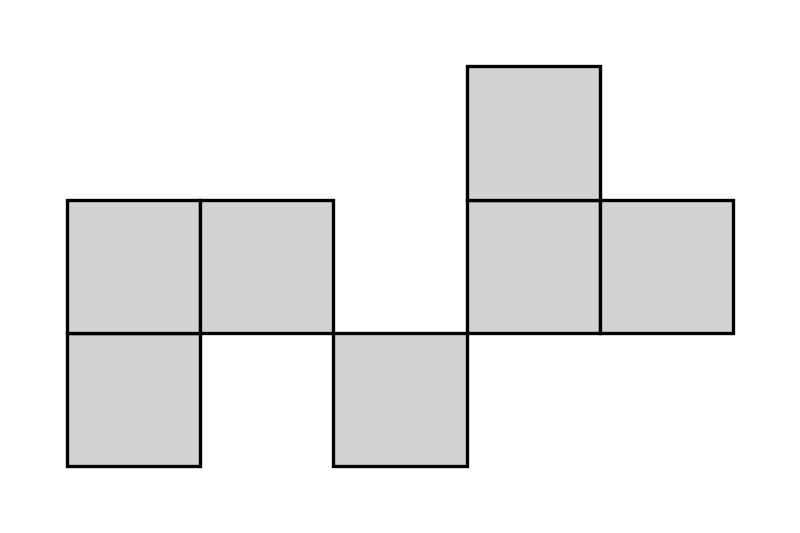} \quad
\includegraphics[scale=0.06]{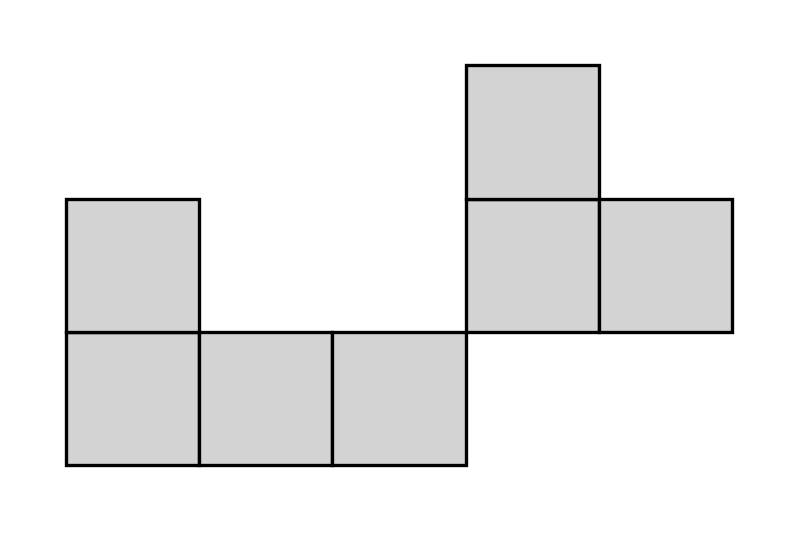} \\ \hline
\centering \vspace{6pt} 8 &
\begin{minipage}[c]{15cm}
\includegraphics[scale=0.16]{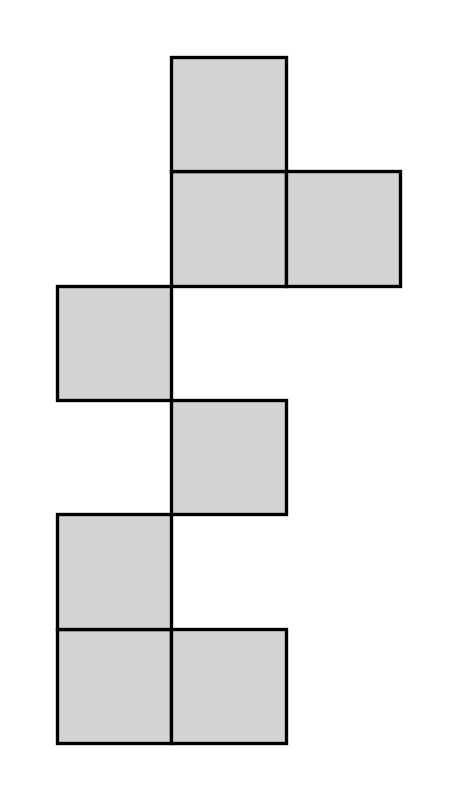} 
\includegraphics[scale=0.16]{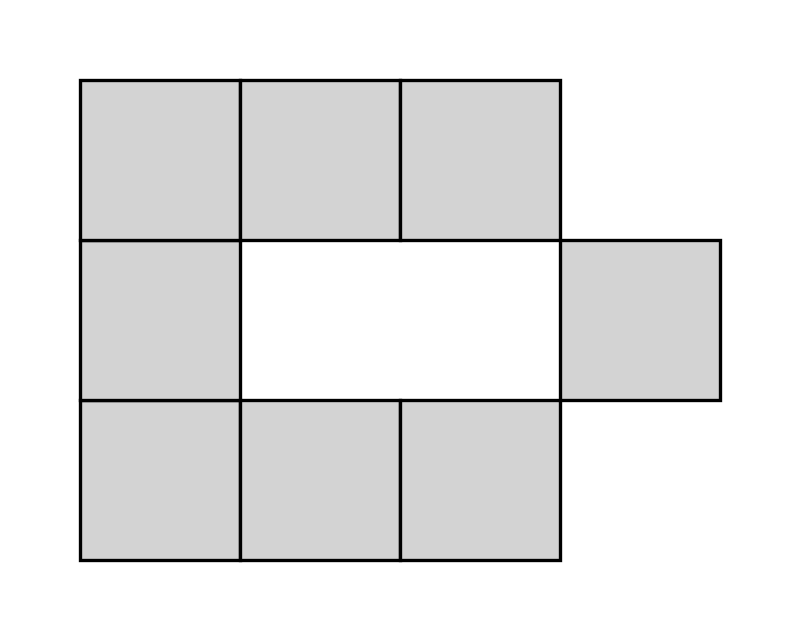} 
\includegraphics[scale=0.16]{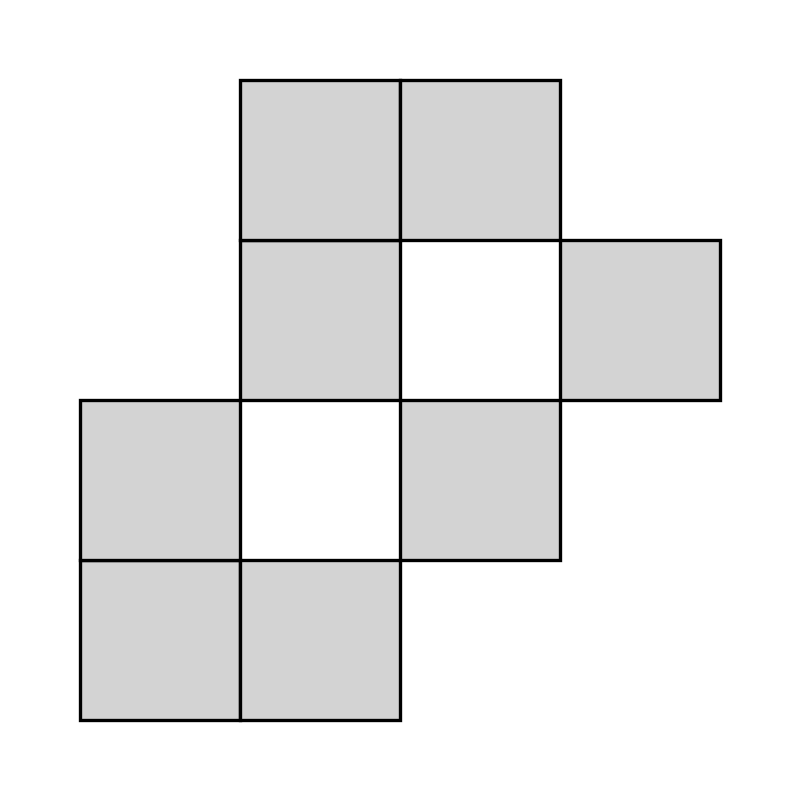} 
\includegraphics[scale=0.16]{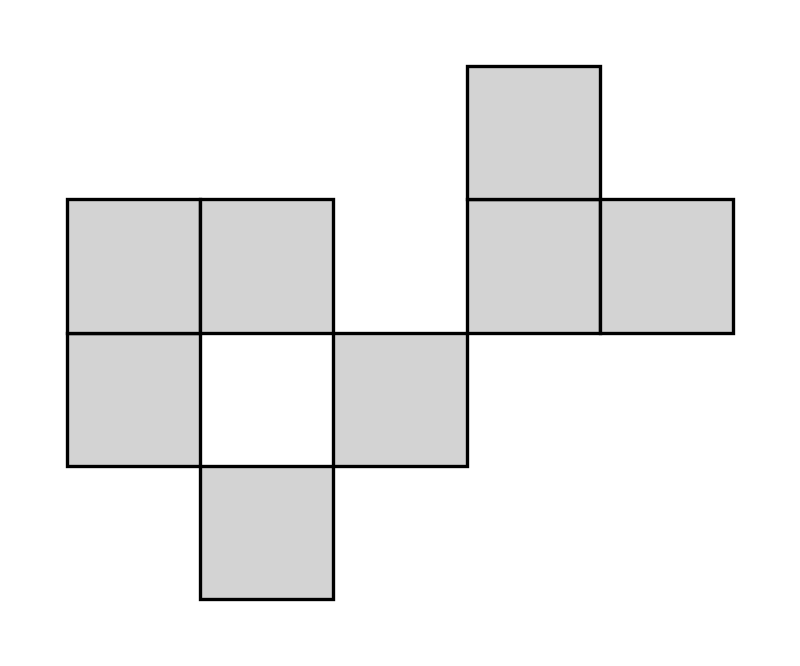}  
\includegraphics[scale=0.16]{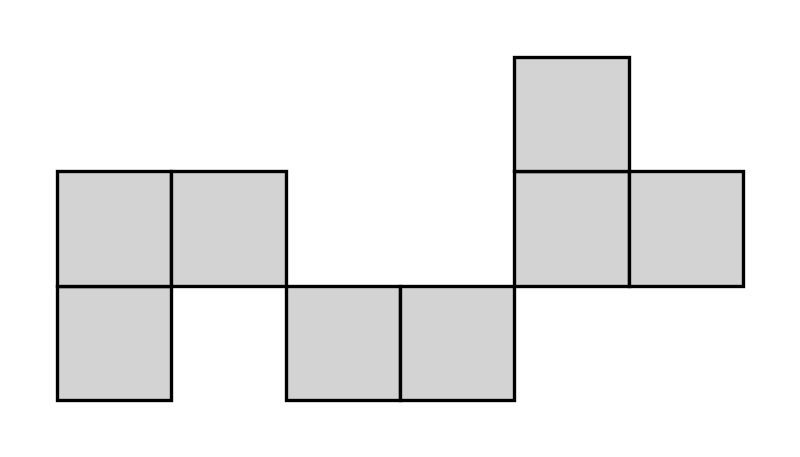} 
\includegraphics[scale=0.16]{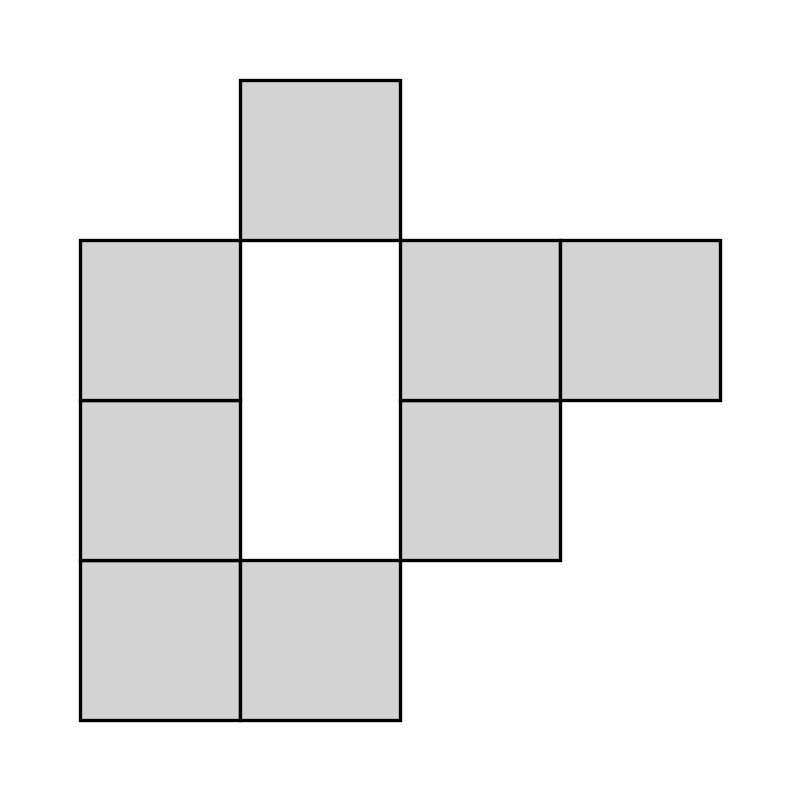} 
\includegraphics[scale=0.16]{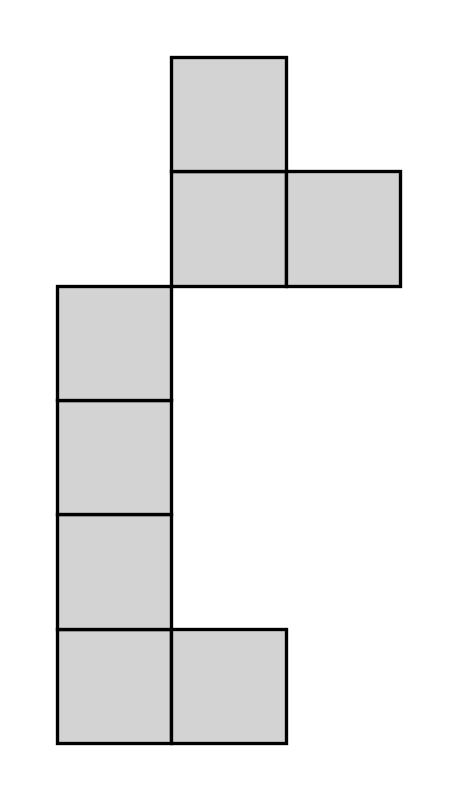} 
\includegraphics[scale=0.16]{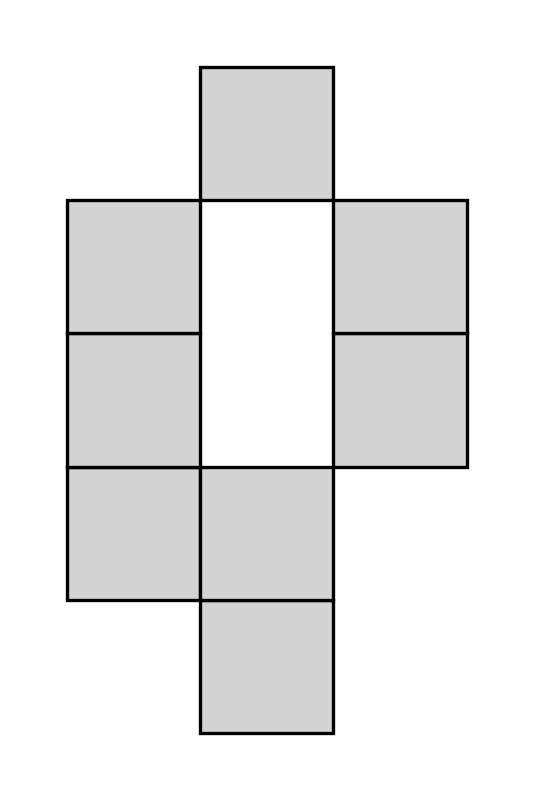}
\end{minipage} \\ \hline
\end{tabular}
\caption{Minimally non-radical configurations of cells according to their rank.}
\label{tab:nonradical_all}
\end{table}

\begin{proof}
Using \textit{Macaulay2}~\cite{CNJ, M2}, we verify that the adjacent $2$-minor ideal associated with each configuration of cells listed in Table~\ref{tab:nonradical_all} is not radical.

Let $\mathcal{C}$ be a collection of cells. We divide the proof according to the type of configuration contained in $\mathcal{C}$, following the ranks listed in Table~\ref{tab:nonradical_all}.

\medskip
\textbf{\textit{Rank 4.}} We first consider the case in which $\cC$ properly contains one of the two configurations of rank four. 

\smallskip
\noindent
\textup{(1)} Assume that $\cC$ properly contains a square tetromino, denoted by $\cS$. Then for every cell
$A \in \cC \setminus \cS$, at least one edge of $A$ lies in $V(\cC) \setminus V(\cS)$. Hence, by Proposition~\ref{prop:easy}, we have that $I_{\mathrm{adj}}(\cC)$ is not radical.

\smallskip
\noindent
\textup{(2)} Now assume that $\cC$ contains the polyomino $\cP$ as in Figure \ref{Figure: no radical coll} (A). If either $A$ or $B$ belongs to $\mathcal{C}$, then $\mathcal{C}$ contains a
square tetromino, and by the previous case,
$I_{\mathrm{adj}}(\mathcal{C})$ is not radical. If neither of these cells is contained in $\cC$, then for every cell
$A \in \cC \setminus \cP$, at least one edge of $A$ lies in $V(\cC) \setminus V(\cP)$. Hence, by Proposition~\ref{prop:easy}, we have that $I_{\mathrm{adj}}(\cC)$ is not radical.

\begin{figure}[h]
	\centering
    \subfloat[$\cP$]{\includegraphics[scale=0.8]{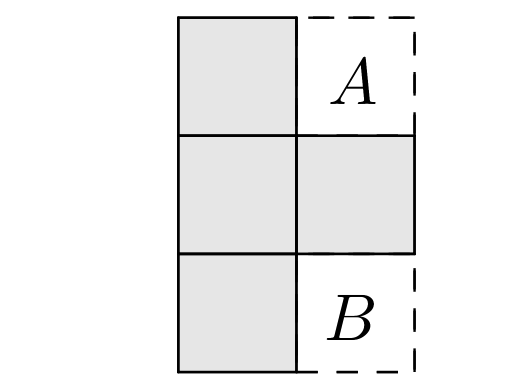}}
    \subfloat[$\cC_1$]{\includegraphics[scale=0.8]{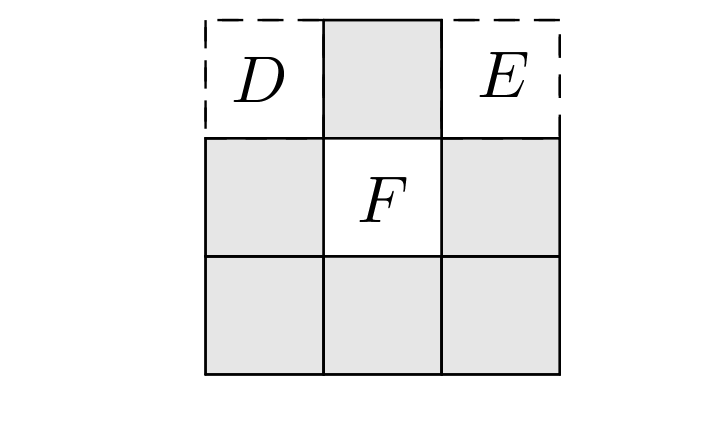}}
    \subfloat[$\cC_2$]{\includegraphics[scale=0.8]{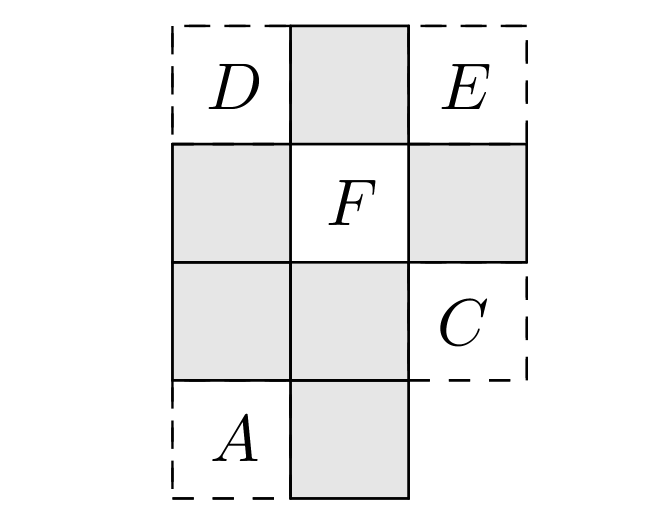}}
    \subfloat[$\cC_3$]{\includegraphics[scale=0.8]{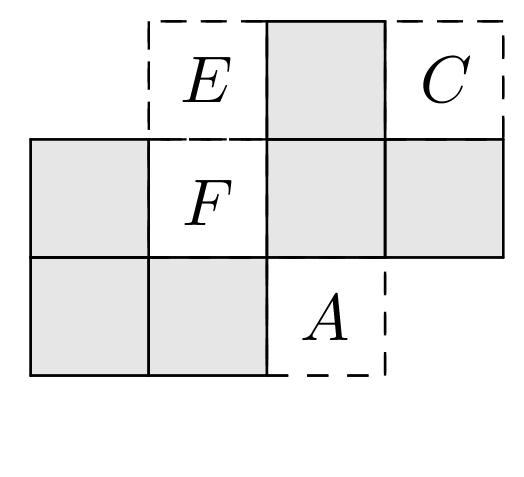}}
	\caption{Some non-radical collections of cells.}
	\label{Figure: no radical coll}
\end{figure}

\medskip
\textbf{\textit{Rank 6.}} We now examine the cases in which $\cC$ contains one of the three configurations of rank six. 

\smallskip
\noindent
\textup{(1)}  Suppose that $\cC$ contains $\cC_1$, shown in Figure \ref{Figure: no radical coll} (A).   If $F \in \cC$, then $\cC$ contains a square tetromino. By the discussion in the Rank~$4$ case, it follows that
$I_{\mathrm{adj}}(\mathcal{C})$ is not radical. 

Now suppose that $F\notin \cC$.  
If $E \in \cC$ and $D \notin \cC$, then using \textit{Macaulay2} (\cite{CNJ, M2}), one verifies that the adjacent $2$-minor ideal of $\cC' = \cC_1 \cup \{E\}$ is not radical. Moreover, for every cell
$A \in \cC \setminus \cC'$, at least one edge of $A$ lies in $V(\cC) \setminus V(\cC')$. Hence, by Proposition~\ref{prop:easy}, $I_{\mathrm{adj}}(\cC)$ is not radical. A similar argument applies when $E \notin \cC$ and $D \in \cC$.

If both cells $E$ and $D$ belong to $\mathcal{C}$, then the adjacent $2$-minor ideal of $\cC'' = \cC_1 \cup \{E,D\}$ is not radical, as verified using \textit{Macaulay2}~\cite{CNJ, M2}. Furthermore, for every cell
$A \in \cC \setminus \cC''$, at least one edge of $A$ lies in $V(\cC) \setminus V(\cC'')$. Hence, Proposition~\ref{prop:easy} again implies that $I_{\mathrm{adj}}(\cC)$ is not radical. 

Finally, if none of the above cells belongs to $\cC$, then for every cell $A \in \cC \setminus \cC_1$, at least one edge of $A$ lies in $V(\cC) \setminus V(\cC_1)$ and the conclusion follows once more from Proposition~\ref{prop:easy}.

\smallskip
\noindent
\textup{(2)} Suppose that $\cC$ contains $\cC_2$, shown in Figure \ref{Figure: no radical coll} (B).  
If either $F$ or $A$ belongs to $\cC$, then $\cC$ contains a square tetromino. By the discussion in Case~Rank~$4$-(1), it follows that $I_{\mathrm{adj}}(\mathcal{C})$ is not radical. 
If $\cC$ contains $C$ or $D$, then $\cC$ also contains the configuration $\cC_1$ discussed above, and the claim follows.  

Now suppose that $\cC$ does not contain $A, C, D,$ and $F$.  
If $E \in \cC$, then using \textit{Macaulay2} (\cite{CNJ, M2}), one verifies that the adjacent $2$-minor ideal of $\cC' = \cC_2 \cup \{E\}$ is not radical. Moreover, for every cell
$A \in \cC \setminus \cC'$, at least one edge of $A$ lies in $V(\cC) \setminus V(\cC')$. Hence, Proposition~\ref{prop:easy} implies that $I_{\mathrm{adj}}(\cC)$ is not radical. 

If none of the above cells belongs to $\cC$, then 
for every cell
$A \in \cC \setminus \cC_2$, at least one edge of $A$ lies in $V(\cC) \setminus V(\cC_2)$ and Proposition~\ref{prop:easy} again yields that
$I_{\mathrm{adj}}(\mathcal{C})$ is not radical.

\smallskip
\noindent
\textup{(3)}  Suppose that $\cC$ contains $\cC_3$, shown in Figure \ref{Figure: no radical coll} (C).  
If either $F$ or $C$ belongs to $\cC$, then $\cC$ contains a square tetromino; similarly, if $A \in \cC$, then $\cC$ contains $\cP$ as in Figure \ref{Figure: no radical coll} (A).
Hence, $I_{\mathrm{adj}}(\cC)$ is not radical following the discussion of Rank 4.  
If $\cC$ contains $E$, then $\cC$ also contains the configuration $\cC_2$ discussed above, and the claim follows. 

Assume that $\cC$ does not contain $A, C, E,$ and $F$. Then for every cell
$A \in \cC \setminus \cC_3$, at least one edge of $A$ lies in $V(\cC) \setminus V(\cC_3)$. Hence, Proposition~\ref{prop:easy} implies that $I_{\mathrm{adj}}(\cC)$ is not radical.  
    
\medskip
\textbf{\textit{Ranks 7 and 8.}}
The remaining cases, corresponding to ranks~$7$ and~$8$, follow by arguments
analogous to those used above.

This concludes the proof.
\end{proof}

We conclude this section by commenting on the scope and limitations of the necessary conditions obtained above, based on computational evidence.

\begin{Remark}\rm \label{Remark: final observations}
Computational evidence shows that the figures presented above represent all minimally non-radical shapes among collections of $n$ cells for $n \leq 8$ (see the outputs in \cite{N}). Providing a complete characterization of collections of cells whose adjacent 2-minor ideals are radical appears to be a highly challenging problem. Indeed, our computations indicate that minimally non-radical shapes emerge with increasing rank in a way that does not exhibit an obvious pattern. For example, when passing from rank $4$ to rank $6$, the three new shapes do not seem to share clear similarities with those of lower rank; from rank $6$ to rank $7$, the first shape of rank $7$ does not appear to derive from any previous configuration; similarly, from rank $7$ to rank $8$, the third, fourth, and sixth shapes appear independently of lower-rank configurations. Naturally, some configurations are ``similar'': for instance, from rank $6$ to rank $9$, the shapes $\cD_2$, $\cD_3$, and $\cD_4$ defined in Proposition \ref{Propostion: minimal non-radical} recur. Moreover, the number of non-radical collections grows rapidly with the rank, making computations increasingly demanding (see Table \ref{tab:collection}).

\begin{table}[h!]
\centering
\begin{tabular}{|>{\centering\arraybackslash}p{6cm}|c|c|c|c|c|c|c|c|c|}
\hline
Rank & 2 & 3 & 4 & 5 & 6 & 7 & 8 \\
\hline
\parbox[c]{6cm}{\centering\vspace{1mm} Number of weakly connected\\collections of cells\vspace{1mm}} & 2 & 5 & 22 & 94 & 524 & 3031 & 18770  \\
\hline
\parbox[c]{6cm}{\centering\vspace{1mm} Number of non-radical\\ collections of cells\vspace{1mm}} & 0 & 0 & 2 & 9 & 74 & 550 & 4210  \\
\hline
\end{tabular}
\caption{Number of weakly connected collections of cells up to symmetries.}
\label{tab:collection}
\end{table}

\end{Remark}

\begin{footnotesize}
{\bf Acknowledgments.} Francesco Navarra and Ayesha Asloob Qureshi were supported by Scientific and Technological Research Council of Turkey T\"UB\.{I}TAK under the Grant No: 124F113. Francesco Navarra is a member of GNSAGA Indam and he acknowledges their support. Sara Saeedi Madani was in part supported by a grant from IPM (No. 1404130019). 

{\bf Declaration of competing interest.}  The authors declare that they have no known competing financial interests or personal relationships that could have appeared to influence the work reported in this paper.\\

{\bf Data availability.} Data used for this article are provided in \cite{N}.
\end{footnotesize}


\begin{thebibliography}{99}
		
		\addcontentsline{toc}{chapter}{\bibname}
		
\bibitem{BH} W.~Bruns, J.~Herzog, \textit{Cohen--Macaulay Rings}, Cambridge University Press, London, 1993.

\bibitem{BV} W.~Bruns, U.~Vetter, \textit{Determinantal Rings}, Lecture Notes in Mathematics, vol.~1327, Springer, New York, 1988.

\bibitem{CNJ} C. Cisto, R. Jahangir, F. Navarra, \textit{Collections of cells, polyominoes and binomial ideals}, arXiv:2403.06743, 2025. Package: \url{https://www.macaulay2.com/doc/Macaulay2/share/doc/Macaulay2/PolyominoIdeals/html/index.html}.

\bibitem{CNU} C. Cisto, F. Navarra, R. Utano, Primality of weakly closed path polyominoes, Illinois Journal of Mathematics, \textbf{66}(4), 545--563, 2022.


\bibitem{DES} P.~Diaconis, D.~Eisenbud, B.~Sturmfels, \textit{Lattice walks and primary decomposition}, in Mathematical Essays in Honor of Gian-Carlo Rota, Birkhäuser, Boston, 1998.

\bibitem{ES} D.~Eisenbud, B.~Sturmfels, \textit{Binomial ideals}, Duke Mathematical Journal, \textbf{84} (1996), 1--45.

\bibitem{M2} D.~R.~Grayson, M.~E.~Stillman, \textit{Macaulay2, a software system for research in algebraic geometry}, available at \url{https://macaulay2.com}.

\bibitem{HH} J.~Herzog, T.~Hibi, \textit{Ideals generated by adjacent 2-minors}, Journal of Commutative Algebra, \textbf{4} (2012), 525--549.

\bibitem{HHKR} J.~Herzog, T.~Hibi, F.~Hreinsdóttir, T.~Kahle, J.~Rauh, \textit{Binomial edge ideals and conditional independence statements}, Advances in Applied Mathematics, \textbf{45} (2010), 317--333.

\bibitem{HHO} J. Herzog, T. Hibi, H. Ohsugi, \textit{Binomial Ideals}, Graduate Texts in Mathematics, v. 279, Springer, 2018.

\bibitem{HE} M. Hochster, J.A. Eagon, \textit{Cohen–Macaulay rings, invariant theory, and the generic perfection of determinantal loci}, American Journal of Mathematics, \textbf{93}, 1020–1058, 1971.

\bibitem{HS} S.~Hoşten, J.~Shapiro, \textit{Primary decomposition of lattice basis ideals}, Journal of Symbolic Computation, \textbf{29} (2000), 625--639.

\bibitem{HSu} S.~Hoşten, S.~Sullivant, \textit{Ideals of adjacent minors}, Journal of Algebra, \textbf{277} (2004), 615--642.

\bibitem{Binomials} T. Kahle, \textit{Decompositions of binomial ideals}, Journal of Software for Algebra and Geometry, \textbf{4}, 2012. Binomials: specialized routines for binomial ideals, \emph{Macaulay2} package, Version~1.2.1, available at \url{https://github.com/Macaulay2/M2/tree/master/M2/Macaulay2/packages}.

\bibitem{N} F.~Navarra, \textit{Adjacent 2-Minor Ideals} (Version 1.0) [Computer software], Zenodo, 2025. Available at \url{https://doi.org/10.5281/zenodo.17312374}.

\bibitem{Q} A.~A.~Qureshi, \textit{Ideals generated by 2-minors, collections of cells and stack polyominoes}, Journal of Algebra, \textbf{357} (2012), 279--303.

\bibitem{sage} SageMath, the Sage Mathematics Software System (Version 9.5), The Sage Developers, 2022, \url{https://www.sagemath.org}.

\end{thebibliography}
\end{document}